\newcommand{\newabstract}[1]{%
  \par\bigskip
  \csname otherlanguage*\endcsname{#1}%
  \csname captions#1\endcsname
  \item[\hskip\labelsep\scshape\abstractname.]
}
\newtheorem*{Maintheorem*}{Main Theorem}
\newtheorem*{theorem*}{Theorem}
\newtheorem{theorem}{Theorem}
\newtheorem{definition}{Definition}
\newtheorem{lemma}{Lemma}
\newtheorem{remark}{Remark}
\newtheorem{proposition}{Proposition}
\newtheorem{corollary}{Corollary}
\newcommand{\bigslant}[2]{{\raisebox{.2em}{$#1$}\left/\raisebox{-.2em}{$#2$}\right.}}
\newcommand{\Ric}{\operatorname{Ric}}
\newcommand{\A}{\mathcal{A}}
\newcommand{\LL}{\mathcal{L}}
\newcommand{\D}{\mathcal{D}}
\newcommand{\Oh}{\mathcal{O}}
\newcommand{\C}{\mathbb{C}}
\newcommand{\NN}{\mathbb{N}}
\newcommand{\T}{\mathcal{T}}
\newcommand{\la}{\alpha}
\newcommand{\lb}{\overline{\beta}}
\newcommand{\ld}{\overline{\delta}}
\newcommand{\lc}{\gamma}
\newcommand{\lt}{\tau}
\newcommand{\ls}{\sigma}
\newcommand{\we}{\wedge}
\newcommand{\Harm}{\mathscr{H}}
\newcommand{\Cka}{\mathscr{C}}
\newcommand{\Ker}{\operatorname{Ker}}
\newcommand{\Ima}{\operatorname{Im}}
\newcommand{\Dom}{\operatorname{Dom}}
\newcommand{\Id}{\operatorname{Id}}
\newcommand{\ovl}{\overline}
\newcommand{\dbar}{\bar \partial}
\newcommand{\dl}{ \partial}
\newcommand\<{\langle}
\renewcommand\>{\rangle}
\newcommand{\DD}{\mathcal{D}}
\newcommand{\X}{\mathcal{X}}
\newcommand{\V}{\mathcal{V}}
\newcommand{\jbar}{\ovl{\jmath}}
\newcommand{\lbar}{\ovl{l}}
\newcommand{\vbar}{\ovl{v}}
\newcommand{\zbar}{\ovl{z}}
\newcommand{\Bbar}{\ovl{B}}
\newcommand{\cbar}{\ovl{\gamma}}
\newcommand{\sbar}{\ovl{s}}
\newcommand{\mi}{\mathrm{i}}
\newcommand{\id}{\operatorname{id}}
\newcommand{\laplace}{\Box_\partial}
\newcommand{\laplacedbar}{\Box_{\dbar}}
\def\ke{Käh\-ler-Ein\-stein}
\def\wp{Weil-Pe\-ters\-son}
\def\ka{Käh\-ler}
\def\wt#1{\widetilde{#1}}
\def\ol#1{\overline{#1}}
\def\pt{{\partial}}
\def\ks{Ko\-dai\-ra-Spen\-cer}
\begin{document}

\title{Positivity of direct images with a Poincar\'e type twist\\}

\author{Philipp Naumann}
\address{Philipp Naumann, Mathematisches Institut,
Universit\"at Bayreuth, 95440 Bayreuth, Germany}
\email{philipp.naumann@uni-bayreuth.de}

\dedicatory{For Tristan}

\thanks{}

\subjclass[2000]{32L10, 32G05, 14Dxx}

\keywords{$L^2$-metric, Curvature of direct image sheaves, Poincar\'e type metric, log forms}

\date{}

\selectlanguage{english}
\begin{abstract}
We consider a holomorphic family $f:\mathcal{X} \to S$ of compact complex manifolds and a line bundle $\LL\to \X$. Given that $\LL^{-1}$ carries a singular hermitian metric that has Poincar\'e type singularities along a relative snc divisor $\DD$, the direct image $f_*(K_{\X/S}\otimes \DD \otimes \LL)$ carries a smooth hermitian metric. In case $\LL$ is relatively positive, we give  an explicit formula for its curvature. The result applies to families of log-canonically polarized pairs. Moreover we show that it improves the general positivity result of Berndtsson-P\u{a}un in a special situation of a big line bundle.

\end{abstract}

\selectlanguage{english}
\maketitle

\section{Introduction}
We consider a proper holomorphic submersion $f: \X \to S$ of complex manifolds and a snc divisor $\DD$ on $\X$ whose restriction $D_s:=\DD|_{X_s}$ to each fiber $X_s=f^{-1}(s)$ is also simple normal crossing. Given a line bundle $\LL \to \X$ that carries a hermitian metric that is smooth on $\X':= \X \setminus \DD$ and whose inverse has Poincar\'e growth near the divisor $\DD$, we study the spaces
of square integrable canonical forms on the open fibers $X'_s:= X_s \setminus D_s$ that have values in $L_s:=\LL|_{X'_s}$. By the work of Zucker \cite{Zu79} and Fujiki \cite{Fu92} we can identify these $L^2$-Dolbeault cohomology groups $H^0_{(2)}(X'_s,K_{X'_s}\otimes L_s)$ with the spaces $H^0(X_s,K_{X_s}\otimes D_s\otimes L_s)$. More globally, we will see that these spaces are the fibers of the coherent sheaf $f_*(K_{\X/S}\otimes \DD \otimes \LL)$ on the base $S$. Under the condition that this sheaf is locally free, the natural $L^2$-metrics on the $L^2$-Dolbeault spaces induce a smooth hermitian metric on the direct image. We give an explicit curvature formula for it in case the hermitian metric on $\LL$ is positive along the fibers $X'_s$. In case $\LL$ is globally positive on $\X':=\X \setminus \DD$, direct image 
$f_*(K_{\X/S}\otimes \DD \otimes \LL)$  is also positive. The result applies to  families of log-canonically polarized pairs where the Poincar\'e type K\"ahler-Einstein metrics induce such a singular hermitian metric on the relative canonical bundle. We give another application to illustrate how our result improves the general positivity theorem from \cite{BP08} in a special situation.

\section{Differential geometric setup and statement of results}
Let $f: \X \to S$ be a proper holomorphic submersion of complex manifolds with connected fibers and $\LL$ a line bundle on $\X$. We assume that $\LL$ has a hermitian metric $h$ that is smooth on the complement of a relative simple normal crossing divisor $\DD=\sum_{i=1}^l{\DD_i}$ on $\X$ with the following asymptotic behaviour along $\DD$: 
\begin{eqnarray} \label{asym}
h^{-1}|_{\X'}= \exp(u)\cdot \frac{h_{\LL^{-1}}^{C^{\infty}}}{\prod_{i=1}^l{||\sigma_i||_i^2\log^2||\sigma_i||_i^2}}
\end{eqnarray}
where the notation is as follows:
\begin{itemize}
\item[$\bullet$] $h_{\LL^{-1}}^{C^{\infty}}$ is a smooth metric on $\LL^{-1}$
\item[$\bullet$]  $||\sigma_i||_i$ is the norm of the canonical section cutting out $\D_i$ w.r.t. a smooth metric s.t. $||\sigma_i||_i<1$
\item[$\bullet$]  $u$ is a function on $\X'$ s.t. $u|_{X'_s} \in \Cka^{k,\alpha}$ for all $s$ and the map $s \mapsto u|_{X'_s}$ is Fr\'echet differentiable
\item[$\bullet$] $\omega_s:= -\mathrm{i} \dl \dbar \log(h)|_{X'_s}$ is a Poincar\'e type K\"ahler metric on each fiber $X'_s$. \\
\end{itemize}

Here we have used the H\"older space of functions 
$\Cka^{k,\alpha}=\Cka^{k,\alpha}(X_{s_0}')$ on an open fiber $X'_{s_0}$ that were introduced in \cite{CY80,Kob84,TY86} and do not depend on the fiber. We refer to this by saying that the inverse metric $h^{-1}$ has \emph{Poincar\'e type singularities} along $\DD$. We write $\DD \overset{i}{\hookrightarrow} \X \overset{f}{\rightarrow} S$ for the family of smooth log pairs $(X_s,D_s)$.

The reason for choosing this asymptotic will become clear when we consider the family of Poincar\'e type \ke\, metrics for a family of log canonically polarized manifolds. We remark that it includes the case where the function $u$ is smooth on $\X$, and it implies that $u$ and its derivatives in base and fiber direction that we will consider are  bounded on $\X'$.
In a local description, the norm squared of a local trivialising section $e_{\LL}$ of $\LL$ near a point $p\in \DD$ is given by
$$
|e_{\LL}|^2_{h}(z,s)=\left( \prod_{i=1}^k{|z^i|^2\log^2(|z^i|^2)}\right) \cdot v(z,s), \quad v \in C^k_{loc}(\X'),
$$ 
where the divisor is given by $\DD=\{ z^1\cdots z^k=0\}$ with respect to local holomorphic coordinates $(z,s)$ around $p$ with $z=z^1,\ldots,z^n$ and $s=s^1,\dots,s^m$ such that $f(z,s)=s$. 
Here $n:= \dim X_s$ and $m:= \dim S$.

The curvature form of the hermitian line bundle $(\LL,h)$ restricted to $\X':=\X\setminus \DD$ is given by
$$
\omega_{\X'} := -\sqrt{-1} \dl\dbar \log h.
$$
This means we view $h$ as a singular hermitian metric on $\LL$  whose curvature current restricted to $\X'$ is given by the smooth form
$\omega_{\X'}$.
Our assumption on $h$ guarantees that each restriction $\omega_{\X'}|_{X_s'}$ is quasi-isometric to the model metric
$$
\sqrt{-1}\left( \sum_{i=1}^k{\frac{dz^i\wedge d\zbar^i}{|z^i|^2\log^2(1/|z^i|^2)}} + \sum_{i=k+1}^n{dz^i\wedge d\zbar^i} \right)
$$  
near the point $p$.

We consider the case where the hermitian line bundle $(\LL,h)|_{\X'}$ is relatively positive, which means that 
$$
\omega_s:=\omega_{\X'}|_{X'_s}
$$
are K\"ahler forms on the open fibers $X'_s:=X_s\setminus D_s$. This implies that $\LL\otimes \DD$ is relatively big and nef. Then one has the notion of the \emph{horizontal lift} $v_i$ of a tangent vector $\dl_i$ on the base $S$ (see section \ref{setup} for the precise definition) and we get a representative of the Kodaira-Spencer class by
$$
A_i:=\dbar(v_i)|_{X'_s}
$$
which is a $\Cka^{k,\la}$-tensor by \cite[Lemma 3]{Sch98} and thus square integrable.
Furthermore, one sets
$$
\varphi_{i\jbar}:=\<v_i,v_j\>_{\omega_{\X'}},
$$
which is called the \emph{geodesic curvature}.  We note that $(\varphi_{i\jbar})_{i\jbar}$ is positive (semi-)definite if and only if $\LL$ is globally (semi-)positive on $\X'$. Again our assumption on $u(z,s)$ guarantees that each $\varphi_{i\jbar}$ is a $\Cka^{k,\la}$ function and thus in particular bounded on each fiber.

Now we turn to the direct image sheaf we want to study.
On a fiber $X_s$, we denote by $\Omega^n_{X_s}(\log D_s)=K_{X_s}\otimes D_s$ the locally free sheaf of germs of logarithmic $n$-forms with log-poles along $D_s$. This sheaf is the restriction of 
$\Omega^n(\log \DD)_{\X/S}=K_{\X/S}\otimes \DD$, the sheaf of relative logarithmic $n$-forms with log-poles along $\DD$, to the fibers $X_s$.

We assume that the dimension of the cohomology groups
$$
H^0(X_s,\Omega^n_{X_s}(\log D_s)(L_s))
$$
is constant on $S$ which in general only holds outside a proper subvariety.
Under this assumption we get the local freeness of the coherent sheaf
$$
f_*(\Omega^n(\log \DD)_{\X/S}(\LL))
$$
whose fibers are canonically isomorphic to the cohomology groups $H^0(X_s,\Omega^n_{X_s}(\log D_s)(L_s))$. By the work of Zucker \cite{Zu79} and Fujiki \cite{Fu92}, we can identify these groups with the $L^2$-Dolbeault cohomology groups $H^0_{(2)}(X'_s,K_{X'_s}\otimes L_s)$. Hence, also the latter spaces form a vector bundle on the base, which we denote by
$$
f_*(K_{\X'/S}\otimes \LL|_{\X'})_{L^2}.
$$
It turns out that this is nothing else than the bundle $f_*(\Omega^n(\log \DD)_{\X/S}(\LL))$.
Now we can represent local sections of $E$ by holomorphic sections of $\Omega^n(\log \DD)_{\X/S}(\LL)$ whose restrictions to the open fibers $X'_s$ are thus $L^2$-integrable and holomorphic
$(n,0)$-forms with values in $L_s$. Let $\{\psi^1,\ldots,\psi^r\}$ be a local frame of the direct image consisting of such sections around a fixed point $s \in S$. We denote by $\{(\dl/\dl s_i)\;|\;i=1,\ldots,m\}$ a  basis of the complex tangent space $T_sS$ of $S$ over $\C$, where $s_i$ are local holomorphic coordinates on $S$. 
The components of the metric tensor for the $L^2$-metric on the direct image are then defined by
$$
H^{\lbar k}(s):=\<\psi^k,\psi^l\> :=\<\psi^k|_{X'_s},\psi^l|_{X'_s}\>(s) := \int_{X'_s}{\psi^k|_{X'_s} \cdot \psi^{\lbar}|_{X'_s}\; dV} = \mi^{n^2} \int_{X'_s}{(\psi^k|_{X'_s} \wedge \psi^{\lbar}|_{X'_s})_h}.
$$
Here we use the notation $\psi^{\lbar}:=\ovl{\psi^l}$ for the sections $\psi^l$ and write $dV =\omega_{X'_s}/n!$. The pointwise inner product $\psi^k \cdot \psi^{\lbar}$ is the one given by $\omega_s$ and 
$h|_{X_s'}$.
In the last equality we used the Hodge-Riemann bilinear relation because the holomorphic $(n,0)$-forms are primitive. Note that by working on the open fibers $X_s'$ the metrics involved are smooth so that we have a harmonic theory for square integrable forms lying in the domain of the Laplacian. 

Let $A_{i\lb}^{\la}(z,s)\dl_{\la}dz^{\lb}=\dbar(v_i)|_{X'_s}$ be the $\dbar$-closed representative of the Kodaira-Spencer class of $\dl_i$ described above. We know that $A_i$ lies in the space of smooth and $L^2$-integrable $(0,1)$-forms $A^{0,1}_{(2)}(X'_s,T_{X'_s})$. Hence these, together with contraction, define a map 
$$
A_{i\lb}^{\la}\dl_{\la}dz^{\lb}\cup \; : H^0(X_s,\Omega^n_{X_s}(\log D_s)(L_s)) \to A^{0,1}_{(2)}(X'_s,\Omega^{n-1}_{X'_s}(L_s)). 
$$
When applying the Laplace operator to $(p,q)$-forms with values in $L|_{X'_s}$ on the fibers $X'_s$, we have
\begin{equation*}
\laplace - \laplacedbar = (n-p-q)\cdot \id
\end{equation*}
due to the definition $\omega_{X'_s}=\omega_{\X'}|_{X'_s}$ and the Bochner-Kodaira-Nakano identity. Thus, we write $\Box=\laplace=\laplacedbar$ in the case $q=n-p$. The main result is
\begin{theorem}
\label{Thm}
\label{mainresult}
Let $\DD \overset{i}{\hookrightarrow} \X \overset{f}{\rightarrow} S$ be a family of smooth log pairs and $(\LL,h)\to \X$ a hermitian line bundle as described above. With the objects just described, the 
$L^2$-metric on $f_*(\Omega^n(\log \DD)_{\X/S}(\LL))$ is smooth and its curvature is given by
\begin{eqnarray*}
R_{i\jbar}^{\lbar k}(s) = &&\int_{X'_s}{\varphi_{i\jbar} \cdot(\psi^k\cdot\psi^{\lbar})\,dV}\\
&+& \int_{X'_s}{(\Box +1)^{-1}(A_i \cup \psi^k)\cdot(A_{\jbar}\cup\psi^{\lbar})\,dV}
\end{eqnarray*}
In particular,  $f_*(K_{\X/S}\otimes \DD \otimes \LL)$ is Nakano (semi-)positive if $\LL$ is (semi-)positive on $\X'$ and positive along the fibers $X'_s$.
\end{theorem}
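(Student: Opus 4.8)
The plan is to follow the fiber-integration and Lie-derivative method for the curvature of $L^2$-metrics on direct images developed by Berndtsson and Schumacher, but carried out over the \emph{open} fibers $X'_s$ equipped with the complete Poincar\'e-type K\"ahler metrics $\omega_s$ and twisted throughout by $(\LL,h)$. First I would fix $s\in S$, work with the holomorphic frame $\{\psi^1,\dots,\psi^r\}$ of log-forms, and reduce the computation of the Chern curvature to differentiating the metric tensor $H^{\lbar k}(s)=\int_{X'_s}(\psi^k\cdot\psi^{\lbar})\,dV$ in the base directions. To differentiate under the integral sign I would lift $\dl_i$ to the horizontal lift $v_i$ and express $\dl_i$ and $\dl_{\jbar}$ of the integrand through the Lie derivatives $L_{v_i}$, using Cartan's formula together with the relative holomorphicity $\dbarrel\psi^k=0$. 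The crucial intermediate identity is that the fiberwise $\dbar$ of the lifted section equals the cup product $A_i\cup\psi^k$ up to a term carrying the variation of the weight and of the volume.

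Assembling the two base derivatives produces two kinds of contributions. The variation of $h$ and of $dV$ along $v_i,\vbar_j$ yields the geodesic-curvature term $\int_{X'_s}\varphi_{i\jbar}\,(\psi^k\cdot\psi^{\lbar})\,dV$, where $\varphi_{i\jbar}=\<v_i,v_j\>_{\omega_{\X'}}$ enters precisely because the horizontal lift is defined by orthogonality with respect to $\omega_{\X'}$. The mixed derivatives contribute the $L$-valued $(n-1,1)$-forms $A_i\cup\psi^k$; from the standard expression $R=-\dbar(H^{-1}\dl H)$ for the curvature in a holomorphic frame, the $H^{-1}$-correction removes the part of the first derivative that stays within the frame (the harmonic component), so that only the $\dbar$-derivative, represented by $A_i\cup\psi^k$, survives. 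The Bochner-Kodaira-Nakano identity $\laplace-\laplacedbar=(n-p-q)\id$ together with the relative positivity $\omega_{\X'}|_{X'_s}=\omega_s$ then converts the naive solution operator $\Box^{-1}$ into $(\Box+1)^{-1}$: the shift by $1$ is the twisted counterpart, in this Poincar\'e setting, of the K\"ahler-Einstein normalization underlying Schumacher's computation \cite{Sch98}, and reflects that the fiber metric equals the curvature of $(\LL,h)$ along $X'_s$. Because of this shift, $\Box+1$ has trivial kernel and $(\Box+1)^{-1}$ is globally defined on $L^2$, so no separate harmonic projection is needed; this yields the second summand $\int_{X'_s}(\Box+1)^{-1}(A_i\cup\psi^k)\cdot(A_{\jbar}\cup\psi^{\lbar})\,dV$.

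The positivity statement then follows from the shape of the formula via the standard Nakano criterion. The first summand pairs the geodesic curvature $(\varphi_{i\jbar})$ against the pointwise Gram matrix $(\psi^k\cdot\psi^{\lbar})$ of the frame; as the integrand is, pointwise, the trace of a product of two positive semidefinite Hermitian matrices, it is nonnegative as soon as $(\varphi_{i\jbar})$ is positive semidefinite, and strictly positive when $(\varphi_{i\jbar})$ is positive definite. By the Remark preceding the theorem this is exactly (semi-)positivity of $\LL$ on $\X'$. The second summand has the form $\<(\Box+1)^{-1}\eta,\eta\>_{L^2}$ with $\eta=\sum_{i,k}\xi^{ik}(A_i\cup\psi^k)$ for a test vector $\xi\in T_sS\otimes E_s$; since $\Box\ge0$ on the complete manifold $X'_s$, the operator $(\Box+1)^{-1}$ is bounded, self-adjoint and positive, so this term is nonnegative. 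Together these give Nakano (semi-)positivity.

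The main obstacle is analytic rather than formal: all of the above must be justified on the non-compact fibers $X'_s$ without boundary contributions from $\DD$. Concretely I would need that $A_i$ and $\varphi_{i\jbar}$ are bounded $\Cka^{k,\la}$-tensors (guaranteed by \cite{Sch98} and the hypotheses on $u$), that $A_i\cup\psi^k$ is $L^2$ and lies in $\Dom(\Box)$, that every integration by parts in the Lie-derivative computation has vanishing boundary term near $\DD$ thanks to the quasi-isometry of $\omega_s$ to the model Poincar\'e metric, and that $(\Box+1)^{-1}$ is a well-defined bounded operator. The last point is where completeness of the Poincar\'e metric is essential: it makes $\Box$ essentially self-adjoint with $\Box\ge0$, so the lower bound $\Box+1\ge1$ furnishes the Green operator. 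Finally, the smoothness of the $L^2$-metric and the differentiations in $s$ rely on the Fr\'echet differentiability of $u$ and on the smooth $s$-dependence of the fiberwise solutions $s\mapsto(\Box+1)^{-1}(A_i\cup\psi^k)$; controlling this dependence in the Poincar\'e setting is the technical heart of the argument.
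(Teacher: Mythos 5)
Your proposal is correct and follows essentially the same route as the paper's proof: differentiation of the fiber integrals via Lie derivatives along horizontal lifts, the geodesic-curvature term arising from the contraction $\Theta(\LL|_{\X'})_{\vbar v}=-\varphi_{i\jbar}$, the cup products $A_i\cup\psi^k$ as the $(n-1,1)$-part of $L_v\psi^k$, the Bochner-Kodaira-Nakano shift identifying the Green operator on $(n,1)$-forms with $(\laplace+1)^{-1}$ and hence producing $(\Box+1)^{-1}$, vanishing of the harmonic projection in normal coordinates, and Gaffney's theorem together with $\Cka^{k,\la}$-estimates near $\DD$ to justify the integrations by parts and differentiation under the integral. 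The only slight imprecision is attributing part of the first term to the variation of $dV$ — the paper shows $L_v(dV)=0$, so the geodesic curvature enters solely through the curvature of $(\LL,h)$ contracted with the horizontal lifts — but this does not change the structure or validity of the argument.
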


We remark that in case $\LL$ is (semi-)positive, the direct image $f_*(K_{\X/S}\otimes \DD \otimes \LL)$ is locally free by the Ohsawa-Takegoshi extension theorem.
The result applies to families of log-canonically polarized pairs where the relative canonical bundle $K_{\X'/S}$ plays the role of $\LL$. Here the hermitian metric is induced from the fiberwise Poincar\'e type \ke\; metrics. In this case, we first prove  
\begin{theorem}(= Theorem \ref{posvar})
Let $\DD \overset{i}{\hookrightarrow} \X \overset{f}{\rightarrow} S$ be a family of smooth log-canonically polarized pairs. Then the curvature of the hermitian metric on $K_{\X'/S}$ that is induced by the Poincar\'e type \ke\; metrics on the fibers is semipositive. If the family is effectively parameterized, then $K_{\X'/S}$ is strictly positive. 

This answers a question raised in \cite[Remark 7.1]{Gue16}:

\begin{corollary}(= Corollary \ref{pshvar})
For a family of smooth log-canonically polarized pairs $\DD \overset{i}{\hookrightarrow} \X \overset{f}{\rightarrow} S$ the relative log canonical bundle $K_{\X/S}\otimes\DD$ equipped with the metric induced from the fiberwise K\"ahler-Einstein metrics is nef. If the family is effectively parameterized, $K_{\X/S}\otimes \DD$ is big. 
\end{corollary}

\end{theorem}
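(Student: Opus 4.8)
The plan is to apply the construction of Section~2 with $\LL=K_{\X'/S}$ and $h$ the hermitian metric induced by the fiberwise Poincar\'e type \ke\ metrics, and to show directly that the curvature form $\omega_{\X'}=-\mi\dl\dbar\log h$ is a semipositive (resp.\ positive) $(1,1)$-form on $\X'$, which is precisely the assertion that $K_{\X'/S}$ has semipositive (resp.\ positive) curvature. For a log-canonically polarized pair the fibre metric $\omega_s$ is the complete Poincar\'e type \ke\ metric with $\Ric(\omega_s)=-\omega_s$, so the Chern curvature of the induced metric on $K_{X'_s}$ is $\mi\dl\dbar\log\det(g_{\alpha\bar\beta})=\omega_s>0$; hence $\omega_{\X'}|_{X'_s}=\omega_s$. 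Since the horizontal lifts $v_i$ span the $\omega_{\X'}$-orthogonal complement of the fibre directions, the mixed blocks of $\omega_{\X'}$ vanish and the form is block diagonalized by the horizontal/vertical splitting, its vertical block being $\omega_s$ and its horizontal block being exactly the geodesic curvature matrix $(\varphi_{i\jbar})$. As the vertical block is already positive, semipositivity (resp.\ positivity) of $\omega_{\X'}$ on $\X'$ is \emph{equivalent} to semipositivity (resp.\ positivity) of $(\varphi_{i\jbar})$ at every point.

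The analytic core is the fibrewise identity, established as in Schumacher's computation,
\[
(\Box+1)\,\varphi_{i\jbar} = \< A_i, A_j\>_{\omega_s}\qquad\text{on }X'_s,
\]
where $\Box$ is the fibrewise complex Laplacian on functions and $A_i=\dbar(v_i)|_{X'_s}$ the harmonic Kodaira-Spencer representative. I would derive it by differentiating the \ke\ equation $\Ric(\omega_s)=-\omega_s$ in the base directions and using $\dbar v_i=A_i$ together with the Bochner-Kodaira identity, which turns $\dl\dbar\varphi_{i\jbar}$ into the stated elliptic equation. The same resolvent $(\Box+1)^{-1}$ already appears in the curvature formula of Theorem~\ref{Thm}, which is no accident: the diagonal terms $\varphi_{i\jbar}$ are governed by exactly this operator. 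The difference from the compact case is that the computation must now be carried out on the \emph{non-compact} complete fibre $X'_s$; the standing assumption on $u$ guarantees, via \cite[Lemma 3]{Sch98}, that $\varphi_{i\jbar}$ and $A_i$ are $\Cka^{k,\la}$ and bounded, so the identity holds classically and all the integrations by parts entering its derivation are legitimate because the Poincar\'e metric is complete of finite volume.

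To pass from the equation to positivity I would use that $\Box\ge0$ is essentially self-adjoint on the complete manifold $X'_s$, so $\Box+1$ is invertible with a positivity-preserving resolvent. Fixing $\xi\in T_sS$ and setting $A_\xi=\sum_i\xi^iA_i$, the quantity $\varphi_{\xi\bar\xi}=\sum_{i,j}\xi^i\ovl{\xi^j}\,\varphi_{i\jbar}$ solves $(\Box+1)\varphi_{\xi\bar\xi}=\<A_\xi,A_\xi\>_{\omega_s}\ge0$. Since $\varphi_{\xi\bar\xi}$ is bounded and the Poincar\'e type \ke\ metric has Ricci curvature bounded below by $\Ric(\omega_s)=-\omega_s$, the generalized (Omori-Yau) maximum principle applied to an approximate minimizing sequence forces $\inf_{X'_s}\varphi_{\xi\bar\xi}\ge0$, whence $(\varphi_{i\jbar})\ge0$ and $K_{\X'/S}$ is semipositive. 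For strict positivity under effective parameterization the Kodaira-Spencer map is injective, so $A_\xi\not\equiv0$ for $\xi\ne0$ and the right-hand side is nonnegative and not identically zero; writing $\varphi_{\xi\bar\xi}=(\Box+1)^{-1}\<A_\xi,A_\xi\>_{\omega_s}$ and using that the resolvent kernel $G(x,y)=\int_0^\infty e^{-t}p_t(x,y)\,dt$ of $\Box+1$ is \emph{strictly} positive on the connected complete fibre (as the heat kernel satisfies $p_t>0$) gives $\varphi_{\xi\bar\xi}(x)>0$ everywhere. Thus $(\varphi_{i\jbar})$ is positive definite and $\omega_{\X'}$ is a positive $(1,1)$-form.

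I expect the main obstacle to be precisely the non-compact analysis on $X'_s$: establishing that $(\Box+1)^{-1}$ has the required mapping and positivity properties on the Hölder spaces adapted to the Poincar\'e cusps, and that the maximum principle applies, all of which hinge on completeness, on the Ricci lower bound, and on the a priori $\Cka^{k,\la}$ bounds. For the Corollary I would push the metric forward: by the prescribed asymptotics \eqref{asym} the smooth semipositive (resp.\ positive) metric on $K_{\X'/S}$ over $\X'$ extends to a singular hermitian metric on $K_{\X/S}\otimes\DD$ over all of $\X$ with only the admissible logarithmic singularities along $\DD$ and semipositive curvature current, so $K_{\X/S}\otimes\DD$ is nef; when the family is effectively parameterized the metric is strictly positive on the dense open set $\X'$ and a positive lower bound for the Monge-Amp\`ere mass $\int\omega_{\X'}^{\,n+m}$ yields bigness, answering \cite[Remark 7.1]{Gue16}.
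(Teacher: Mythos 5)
Your proof of the Theorem is essentially the paper's: the same fiberwise identity $(\Box+1)\varphi_{i\jbar}=\langle A_i,A_j\rangle_{\omega_s}$ adopted from Schumacher, the same reduction of positivity of $\omega_{\X'}$ to positivity of the geodesic-curvature block via the horizontal/vertical splitting, Yau's generalized maximum principle (your Omori--Yau argument, the paper cites \cite[Theorem 1]{Yau78}) for semipositivity, and strict positivity of the resolvent kernel via heat-kernel positivity for the effectively parameterized case (the paper invokes \cite[Cor. 4.3]{St92} and then argues as in \cite[Prop. 1]{Sch12}, which is exactly your $G(x,y)=\int_0^\infty e^{-t}p_t(x,y)\,dt>0$ argument).

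For the Corollary, however, your nefness step contains a genuine gap: you pass from "the metric extends to a singular hermitian metric on $K_{\X/S}\otimes\DD$ with semipositive curvature current" directly to "$K_{\X/S}\otimes\DD$ is nef". A semipositive curvature current only gives pseudoeffectivity, not nefness: for instance $\Oh(E)$, $E$ the exceptional divisor of a blow-up, carries a singular metric with curvature current $[E]\ge 0$ but is not nef. The missing ingredient, which the paper makes explicit, is that the curvature current of the extended metric has \emph{vanishing Lelong numbers}: the $-[\DD]$ contribution coming from $-\sum\dl\dbar\log||\sigma_i||^2$ via Poincar\'e--Lelong is cancelled by tensoring with $\DD$ equipped with its canonical singular metric, and the remaining singularities are of $\log\log$ type, hence Lelong-number free; nefness then follows (via Demailly regularization) from pseudoeffectivity plus zero Lelong numbers. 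Your phrase "admissible logarithmic singularities" gestures at this but does not supply the argument. The bigness step is fine and coincides with the paper's: strict positivity of $\omega_{\X'}$ on $\X'$ together with Boucksom's criterion \cite[Cor. 3.3]{Bou02}, which is what your "positive Monge--Amp\`ere mass" appeal amounts to.
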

By combining both theorems we get 
\begin{corollary}
For a family of log-canonically polarized pairs $\DD \overset{i}{\hookrightarrow} \X \overset{f}{\rightarrow} S$ the direct image sheaf $f_*((K_{\X/S}\otimes\DD)\otimes K_{\X/S})$ is semipositive in the sense of Nakano. In case the family of log pairs is effectively parameterized this direct image is Nakano positive. 
\end{corollary}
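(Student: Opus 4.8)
The plan is to apply the main result, Theorem \ref{Thm}, to the line bundle $\LL=K_{\X/S}$ equipped with the hermitian metric $h$ induced by the fiberwise Poincar\'e type \ke\ metrics. With this choice the twisted direct image specializes to
\[
f_*\bigl(\Omega^n(\log\DD)_{\X/S}(\LL)\bigr)=f_*\bigl(K_{\X/S}\otimes\DD\otimes K_{\X/S}\bigr)=f_*\bigl((K_{\X/S}\otimes\DD)\otimes K_{\X/S}\bigr),
\]
so that the positivity statement of Theorem \ref{Thm} applies verbatim once its hypotheses are verified for this particular $(\LL,h)$.

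First I would check that $(\LL,h)$ is of the type required by Theorem \ref{Thm}, namely that $h^{-1}$ has Poincar\'e type singularities along $\DD$ in the sense of (\ref{asym}) and that $(\LL,h)|_{\X'}$ is relatively positive. Since the pairs $(X_s,D_s)$ are log-canonically polarized, each open fiber $X'_s$ carries a complete Poincar\'e type \ke\ metric $\omega_{KE,s}$ normalized by $\Ric(\omega_{KE,s})=-\omega_{KE,s}$; by the asymptotic analysis of \cite{CY80,Kob84,TY86} this metric has exactly the Poincar\'e growth along $D_s$ demanded in (\ref{asym}), and the induced weight on $K_{X'_s}$ yields a potential $u$ with the stated $\Cka^{k,\la}$-regularity along the fibers and the Fr\'echet differentiability in $s$. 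The metric $h$ on $K_{\X/S}$ restricts on each fiber to the metric whose Chern curvature is $-\Ric(\omega_{KE,s})=\omega_{KE,s}>0$; hence $\omega_s=\omega_{\X'}|_{X'_s}=\omega_{KE,s}$ is K\"ahler and $(\LL,h)|_{\X'}$ is relatively positive, so the geodesic curvature $\varphi_{i\jbar}$ and the Kodaira--Spencer representatives $A_i$ are well defined and of class $\Cka^{k,\la}$ as needed.

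Next I would invoke Theorem \ref{posvar}: for a family of smooth log-canonically polarized pairs the curvature $\omega_{\X'}$ of $(K_{\X'/S},h)$ is semipositive on $\X'$, and strictly positive when the family is effectively parameterized. In the terminology of Theorem \ref{Thm} this is precisely the assertion that $\LL=K_{\X/S}$ is semipositive (respectively positive) on $\X'$. Combined with the fiberwise positivity established in the previous step, the concluding statement of Theorem \ref{Thm} then gives that $f_*((K_{\X/S}\otimes\DD)\otimes K_{\X/S})$ is Nakano semipositive, and Nakano positive in the effectively parameterized case. Local freeness of the direct image is automatic here by the Ohsawa--Takegoshi extension theorem, as remarked after Theorem \ref{Thm}, since $\LL$ is semipositive.

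The only genuinely technical point is the verification carried out in the second step: that the fiberwise Poincar\'e type \ke\ metrics fit the differential-geometric framework, and in particular that the potential $u$ in (\ref{asym}) has the asserted H\"older regularity along the fibers and depends Fr\'echet-differentiably on $s$. This, however, is exactly the regularity input that already underlies the proof of Theorem \ref{posvar}, so no analysis beyond what is established there is required, and the corollary becomes a formal consequence of combining the two theorems.
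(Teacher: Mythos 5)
Your proposal is correct and follows essentially the paper's own route: the corollary is obtained there precisely by taking $\LL=K_{\X/S}$ with the metric induced by the fiberwise Poincar\'e type \ke\ metrics, using Theorem \ref{posvar} to supply the (semi)positivity of $\omega_{\X'}$ required in Theorem \ref{Thm}, and noting that the fiberwise curvature $-\Ric(\omega_s)=\omega_s>0$ gives relative positivity. The regularity checks you outline (the asymptotics (\ref{asym}) and the Fr\'echet differentiability of $s\mapsto u_s$) are exactly the input the paper establishes via the Monge-Amp\`ere equation (\ref{MA}) before stating the corollary, so no further argument is needed.
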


In order to implement the method of computation given in \cite{Sch12, Na17}, we have to pass from the compact fibers $X_s$ to its open part $X'_s$ where the metrics in consideration are smooth. This requires to impose the $L^2$ condition on the spaces of forms on $X'_s$. To show that all steps in the computation are still justified, we have to check integrability. This is possible due to knowledge of the asymptotic behaviour of our sections. Another important tool are H\"older spaces with respect to quasi-coordinates. We give the details below.  

\textbf{Acknowledgements.} The author would like to sincerely thank Georg Schumacher for numerous useful discussions about his article \cite{Sch98} and Mihai P\u{a}un for his comments and interest in this article.

\section{Preparations}
\subsection{$L^2$-integrable forms}\label{se:l2int}
We start with a fiberwise consideration.
Let $(X,D=\sum_{i=1}^lD_i)$ be a smooth log pair and set $X'=X\backslash D$. We consider a holomorphic line bundle $L$ on $X$ together with a metric $h$ that is smooth on $X'$ and whose inverse has the asymptotic behaviour from equation (\ref{asym}):
$$
h^{-1}|_{X'}= \exp(u)\cdot \frac{h_{L^{-1}}^{C^{\infty}}}{\prod_{i=1}^l{||\sigma_i||_i^2\log^2||\sigma_i||_i^2}}
$$
where the notation is as follows:
\begin{itemize}
\item[$\bullet$] $h_{L^{-1}}^{C^{\infty}}$ is a smooth metric on $\LL^{-1}$
\item[$\bullet$]  $||\sigma_i||_i$ is the norm of the canonical section cutting out $D_i$ w.r.t. a smooth metric s.t. $||\sigma_i||_i<1$
\item[$\bullet$]  $u$ is a function in $\Cka^{k,\alpha}(X')$
\item[$\bullet$] $\omega_{X'}:= -\mathrm{i} \dl \dbar \log(h)|_{X'}$ is a Poincar\'e type K\"ahler metric\\ 
\end{itemize}
Then we can identify the holomorphic and locally $L^2$-integrable $(n,0)$-forms with values in $L$:
\begin{proposition}\label{pr:fibL2}
We denote by $\Oh_{(2)}(\Omega^n_{X'}(L|_{X'}),h|_{X'})$ the sheaf of holomorphic $L$-valued $n$-forms on $X'$ which are locally $L^2$ on $X$ with respect to $h|_{X'}$. Then
$$
\Oh_{(2)}(\Omega^n_{X'}(L|_{X'}),h|_{X'}) = \Oh(\Omega^n_X(\log D)(L)).
$$
\end{proposition}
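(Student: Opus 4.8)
The statement is local near a point $p\in D$, and since both sheaves are subsheaves of the sheaf of meromorphic $L$-valued $n$-forms, it suffices to compare their sections over a small polydisc. The plan is to choose holomorphic coordinates $z=(z^1,\dots,z^n)$ centred at $p$ with $D=\{z^1\cdots z^k=0\}$ together with a local frame $e_L$ of $L$, so that $|e_L|^2_h=\bigl(\prod_{i=1}^k|z^i|^2\log^2|z^i|^2\bigr)\cdot v$ with $v$ of class $C^k$ and, crucially, bounded above and below by positive constants because $u$ and hence $\exp(u)$ is bounded. A holomorphic $L$-valued $n$-form on $X'$ is then $\psi=g\,dz^1\we\cdots\we dz^n\otimes e_L$ with $g$ holomorphic on the punctured set $\{z^1\cdots z^k\neq0\}=(\Delta^*)^k\times\Delta^{n-k}$.

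First I would record the simplification that makes the argument purely a matter of the weight $|e_L|^2_h$: for a top degree $(n,0)$-form the pointwise length with respect to $\omega_{X'}$ times the volume element $\omega_{X'}^n/n!$ is independent of the K\"ahler metric, so by the same Hodge--Riemann relation already used to define the $L^2$-metric, the norm only sees the metric on $L$, namely $\|\psi\|^2=\mi^{n^2}\,c_n\int|g|^2\,|e_L|^2_h\; dz^1\we d\ovl{z}^1\we\cdots\we dz^n\we d\ovl{z}^n$ for a universal constant $c_n$. Using the two-sided bound on $v$, finiteness of this integral is equivalent to finiteness of the model integral with weight $\prod_{i=1}^k|z^i|^2\log^2|z^i|^2$, which is what I would work with from here on.

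The inclusion $\Oh(\Omega^n_X(\log D)(L))\subseteq\Oh_{(2)}(\Omega^n_{X'}(L|_{X'}),h|_{X'})$ is the easy direction. A section of $\Omega^n_X(\log D)(L)=K_X\otimes\Oh(D)\otimes L$ is locally $g=\tilde g/(z^1\cdots z^k)$ with $\tilde g$ holomorphic on the full polydisc; then the factor $\prod_{i=1}^k|z^i|^2$ in the weight cancels the pole, and the model integral reduces to $\int|\tilde g|^2\,v\,\prod_{i=1}^k\log^2|z^i|^2$, which converges because $\tilde g$ and $v$ are bounded and $\log^2|z^i|^2$ is locally integrable in each variable. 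Hence every log form is locally $L^2$.

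For the reverse inclusion, which I expect to be the main point, I would expand $g$ in a Laurent series $g=\sum_{\alpha\in\Z^k}c_\alpha(w)\,(z')^\alpha$, Laurent in $z'=(z^1,\dots,z^k)$ and holomorphic in $w=(z^{k+1},\dots,z^n)$, with coefficients $c_\alpha$ holomorphic on $\Delta^{n-k}$. By Fubini and the orthogonality of the monomials on the torus $\{|z^i|=r_i\}$, the model integral splits as a sum over $\alpha$ of $\|c_\alpha\|^2_{L^2(\Delta^{n-k})}\cdot\prod_{i=1}^k\int_0^1 r^{2\alpha_i+3}\log^2(r^2)\,dr$. The one-variable integral converges precisely when $\alpha_i\geq-1$ and diverges for $\alpha_i\leq-2$; hence, for the sum to be finite, every $c_\alpha$ with some index $\alpha_i\leq-2$ must vanish. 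Therefore $(z^1\cdots z^k)\,g$ is bounded near $D$ and extends holomorphically, so $\psi$ lies in $\Omega^n_X(\log D)(L)$. The delicate steps are justifying the term-by-term integration on the punctured polydisc and the holomorphic dependence of the Laurent coefficients on $w$; both follow from the standard theory of Laurent expansions on Reinhardt domains together with the monotone convergence theorem applied to the nonnegative integrand, so no subtle estimate beyond the explicit one-variable computation is required.
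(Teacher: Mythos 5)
Your proof is correct and is essentially the paper's own argument carried out in full: the paper's proof is a two-line sketch invoking exactly this Laurent series expansion combined with the explicit estimates for the Poincar\'e type weight $\prod_i |z^i|^2\log^2|z^i|^2$, which your one-variable radial integrals make precise. The reduction to the weight alone via the metric-independence of the $L^2$-norm of $(n,0)$-forms is the same Hodge--Riemann observation the paper uses when defining the $L^2$-metric, so nothing in your route differs in substance from the intended proof.
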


\noindent The {\em proof} follows immediately from a Laurent series argument together with the estimates of Poincar\'e type metrics: Sections, which are locally square integrable with respect to a metric with a Poincar\'e type metric extend holomorphically as forms with logarithmic poles to the given snc divisor $D$, (and vice versa).  \qed

Let $\A^{n,q}_{(2)}(L|_{X'})$ denote the sheaf on $X$ of $L$-valued $(n,q)$-fomrs that are locally $L^2$ integrable with respect to $\omega_{X'}$ and $h|_{X'}$, and whose $\overline\partial$-exterior derivatives, taken in the current sense, are also locally $L^2$. We refer to \cite{Zu82} or \cite{BZ98} for more details on the $L^2$-complex of sheaves. 

\begin{proposition}\label{pr:L2res}
The complex $(\A^{n,\bullet}_{(2)}(L|_{X'}), \overline\partial)$ of sheaves on $X$ is a fine resolution of $\Oh_{(2)}(\Omega^n_{X'}(L|_{X'}),h|_{X'})$. Thus the $L^2$-Dolbeault cohomology group $H^0_{(2)}(X', \Omega^n_{X'}(L|_{X'}))$ can be identified with $ H^0(X, \Omega^n(\log D)(L))$ which is of finite dimension. 
\end{proposition}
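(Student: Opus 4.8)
The plan is to verify the two properties that make $(\A^{n,\bullet}_{(2)}(L|_{X'}),\dbar)$ a fine resolution of $\Oh_{(2)}(\Omega^n_{X'}(L|_{X'}),h|_{X'})$, and then to invoke the abstract de Rham isomorphism. First I would check that each sheaf $\A^{n,q}_{(2)}(L|_{X'})$ is fine. This is the soft part: these are sheaves on the \emph{compact} manifold $X$ and they are modules over the sheaf $C^\infty_X$ of smooth functions on $X$, since multiplying a form that is locally $L^2$ on $X$ by a smooth function on $X$ again yields such a form, and the same holds for its $\dbar$-derivative taken in the sense of currents. As $X$ admits smooth partitions of unity subordinate to any open cover, so do the sheaves $\A^{n,q}_{(2)}$, whence they are fine.

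Next I would establish exactness of the augmented complex
$$
0 \to \Oh_{(2)} \to \A^{n,0}_{(2)}(L|_{X'}) \xrightarrow{\dbar} \A^{n,1}_{(2)}(L|_{X'}) \xrightarrow{\dbar} \cdots .
$$
The identification of the kernel in degree $0$ with the holomorphic log forms is already Proposition \ref{pr:fibL2}. Exactness at the points of $X'=X\setminus D$ is the classical $\dbar$-Poincar\'e lemma, since there both $h$ and $\omega_{X'}$ are smooth. The real content is exactness at points $p\in D$, which is a local $L^2$-$\dbar$-Poincar\'e lemma: given a $\dbar$-closed, locally $L^2$ form $\alpha$ of type $(n,q)$ with $q\ge 1$ on a small neighbourhood of $p$, one must produce a locally $L^2$ form $\beta$ of type $(n,q-1)$ with $\dbar\beta=\alpha$.

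To prove this local solvability I would pass to the standard local model: a polydisk on which $D=\{z^1\cdots z^k=0\}$ and $\omega_{X'}$ is quasi-isometric to the product of Poincar\'e metrics on the punctured factors and the Euclidean metric on the remaining ones. The crucial feature is that this model metric is \emph{complete}, which by the Andreotti-Vesentini / H\"ormander $L^2$-theory makes $\dbar$ a closed-range operator admitting weighted $L^2$ solutions with the required growth. This is precisely the estimate carried out for Poincar\'e type metrics by Zucker \cite{Zu82} and in \cite{BZ98}, which I would invoke here. This step — the local solvability of $\dbar$ with $L^2$ bounds against the complete Poincar\'e metric near $D$ — is the main obstacle, as everything else is formal.

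With the fine resolution in hand, the abstract de Rham isomorphism identifies the cohomology of the complex of global sections $(\Gamma(X,\A^{n,\bullet}_{(2)}),\dbar)$, which by definition is the $L^2$-Dolbeault cohomology $H^\bullet_{(2)}(X',\Omega^n_{X'}(L|_{X'}))$, with the sheaf cohomology $H^\bullet(X,\Oh_{(2)}(\Omega^n_{X'}(L|_{X'}),h|_{X'}))=H^\bullet(X,\Omega^n(\log D)(L))$. In degree $0$ this gives the asserted identification $H^0_{(2)}(X',\Omega^n_{X'}(L|_{X'}))\cong H^0(X,\Omega^n(\log D)(L))$. Finally, since $X$ is compact and $\Omega^n(\log D)(L)$ is a coherent (indeed locally free) sheaf, its space of global sections is finite-dimensional by the Cartan-Serre finiteness theorem, which yields the last claim.
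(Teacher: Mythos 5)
Your identification of the kernel, your use of the abstract de Rham isomorphism, and your appeal to coherence for finite-dimensionality all match the paper, and your treatment of exactness (classical $\dbar$-Poincar\'e lemma on $X'$, a local $L^2$-$\dbar$-lemma in the Poincar\'e model near $D$, deferred to \cite{Zu82}, \cite{BZ98}) is a legitimate substitute for the paper's citation of \cite[Prop. 2.1]{Fu92} --- though be aware that completeness of the model metric alone does not give closed range or solvability; the actual content lies in the weighted estimates of the references you cite, just as it lies in Fujiki's proposition for the paper.

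The genuine gap is in the step you dismiss as ``the soft part.'' Fineness of the sheaves $\A^{n,q}_{(2)}(L|_{X'})$ is \emph{not} automatic, and the paper explicitly flags this, citing \cite[p.175]{Zu82}. The problem is the Leibniz term: for a cut-off function $\chi$ smooth on $X$ and a section $\alpha$ of $\A^{n,q}_{(2)}(L|_{X'})$ one has $\dbar(\chi\alpha)=\chi\,\dbar\alpha+\dbar\chi\wedge\alpha$, and the second term is locally $L^2$ only if the pointwise norm $|\dbar\chi|_{\omega_{X'}}$ is bounded, where the norm is taken with respect to the \emph{singular} metric $\omega_{X'}$, not a smooth metric on $X$. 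For a general degenerate metric this fails (if the metric becomes small in some direction near $D$, the dual norm on $(0,1)$-forms blows up and smooth cut-offs do not act on the $L^2$-complex), so your assertion that multiplication by smooth functions preserves the sheaf ``and the same holds for its $\dbar$-derivative'' cannot be had for free; as stated, your argument would ``prove'' fineness for metrics where it is false. What rescues it here is precisely the Poincar\'e geometry: since $g_{i\ibar}\sim |z^i|^{-2}\log^{-2}|z^i|^2$ in the directions normal to $D$, the dual norm satisfies $|d\zbar^i|_{\omega_{X'}}\sim |z^i|\,\bigl|\log|z^i|^2\bigr|\to 0$, so smooth functions on $X$ have bounded (indeed decaying) differential with respect to $\omega_{X'}$, i.e.\ cut-off functions with bounded differential exist, as in \cite{Zu79}. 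Adding this one computation closes the gap; with it your proof is correct and essentially coincides with the paper's.
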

\begin{proof}
We decompose the vector bundle locally as a sum of line bundles and apply \cite[Prop. 2.1]{Fu92}, cf. also \cite[p.870]{Fu92}. This shows that we have a resolution. The fact that we get indeed a \emph{fine} resolution is not automatic, cf. \cite[p.175]{Zu82} because we need cut-off functions with bounded differential. But this holds in the context of Poincar\'e geometry, see \cite{Zu79} where it was successfully employed in the one-dimenional case. 
\end{proof}

\subsection{Quasi-coordinates and Hölder spaces}
\label{sectionHoelderspaces}
We recall from \cite{CY80} that a \emph{quasi-coordiante map} is a holomorphic map from an open set $V \subset \C$ into $X'$ if it is of maximal rank everywhere in $V$. In this case, $V$ together with the Euclidean coordinates of $\C^n$ is called a \emph{local quasi-coordinate} of $X'$. According to \cite{CY80,Kob84,TY86}, we have the following
\begin{proposition}
\label{quasicoor}
There exists a family $\V=\{\left(V;v^1,\ldots,v_n\right)\}$ of local quasi-coordinates of $X'$ with the following properties:
\begin{itemize}
\item[(i)] $X'$ is covered by the images of the quasi-coordinates in $\V$.
\item[(ii)] The complement of some open neighbourhood of the divisor $D$ in $X$ is covered by the images of finitely many of the quasi-coordinates in $\V$ which are local coordinates in the usual sense.
\item[(iii)] For each $(V;v^1,\ldots,v^n) \in \V, v \subset \C^n$ contains an open ball of radius $\frac{1}{2}$.
\item[(iv)] There are constants $c>0$ and $A_k>0, k+0,1,\ldots,$ such that for every $(V;v^1,\ldots,v^n) \in \V,$ the following inequality hold:
\begin{itemize}
 \item[$\bullet$] We have 
 $$
 \frac{1}{c} (\delta_{i\jbar}) < (g_{i\jbar}) < c (\delta_{i\jbar})
 $$
 as matrices in the sense of positive definiteness.
 \item[$\bullet$] For any multiindices $I=(i_1,\ldots,i_p)$ and $J=(j_1,\ldots,j_q)$ of order $|I|=i_1+\ldots + i_p$ respectively $|J|=j_1+\ldots + j_q$ we have
 $$
 \left| \frac{\dl^{|I|+|J|} g_{i\jbar}}{\dl v^I\dl \ovl{v}^J} \right| < A_{|I|+|J|},
 $$
 where $\dl v^{I}=(\dl v^1)^{i_1} \cdots (\dl v^p)^{i_p}$ and $\dl \ovl{v}^J=(\dl \ovl{v}^1)^{j_1}\cdots (\dl \ovl{v}^q)^{j_q}$.
\end{itemize}
\end{itemize}
\end{proposition}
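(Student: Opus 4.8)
The plan is to produce $\V$ by \emph{unrolling} the degenerate metric near $D$ through a universal covering, as in \cite{CY80,Kob84,TY86}, and to patch the result with finitely many ordinary charts away from $D$. First I would dispose of the region far from the divisor: the complement in $X$ of a fixed tubular neighbourhood of $D$ is compact, hence covered by finitely many ordinary holomorphic coordinate charts on which $\omega_{X'}$ is a smooth K\"ahler metric with all derivatives bounded. After an affine rescaling making each coordinate image a ball of radius $3/4$, these charts already witness (ii), together with the corresponding parts of (i), (iii) and (iv). It then remains to construct quasi-coordinates on a punctured neighbourhood of $D$.

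The heart of the matter is the one-variable model. For the punctured disc $\Delta^{*}=\{0<|z|<1\}$ with Poincar\'e metric, let $\pi\colon\HH\to\Delta^{*}$, $\pi(\tau)=\exp(\mi\tau)$, be the universal covering (equivalently one may use the disc model $v\mapsto\exp((v+1)/(v-1))$). Fixing the domain $V_{0}=\{|v|<3/4\}$, I would set, for each $(x,y)\in\R\times\R_{>0}$,
\[
\phi_{x,y}(v)=\pi\bigl(x+\mi y+y\,v\bigr),
\]
which maps $V_{0}$ into $\Delta^{*}$ since $1+\operatorname{Im}v>0$ there. A direct computation gives
\[
\phi_{x,y}^{*}\left(\frac{\sqrt{-1}\,dz\wedge d\zbar}{|z|^{2}\log^{2}|z|^{2}}\right)=\frac{\sqrt{-1}\,dv\wedge d\vbar}{4\,(1+\operatorname{Im}v)^{2}},
\]
\emph{independently of} $(x,y)$. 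Thus every chart pulls the model metric back to one and the same fixed metric of bounded geometry on $V_{0}$, which manifestly satisfies (iv) with uniform constants $c$ and $A_{k}$; as $(x,y)$ ranges over $\R\times\R_{>0}$ the images $\phi_{x,y}(V_{0})$ sweep out a punctured neighbourhood of the cusp, giving (i), while $V_{0}$ contains a ball of radius $1/2$, giving (iii). Taking products of $k$ such maps in the directions $z^{1},\ldots,z^{k}$ and the identity in $z^{k+1},\ldots,z^{n}$ yields quasi-coordinates for the model polydisc near a point of $D=\{z^{1}\cdots z^{k}=0\}$.

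Next I would transfer these model charts to $X'$ and absorb the discrepancy between $\omega_{X'}$ and the model. By the asymptotic assumption (\ref{asym}), in local coordinates at $p\in D$ the genuine metric is the model product metric perturbed by the factor $\exp(u)$ and the smooth metric $h_{L^{-1}}^{C^{\infty}}$. Pulling back by the model quasi-coordinates, the smooth data contribute bounded-derivative factors, while $\exp(u)$ and its derivatives remain bounded precisely because $u\in\Cka^{k,\alpha}(X')$ and this H\"older norm is measured in exactly these quasi-coordinates; hence the uniform bounds (iv) survive the perturbation. Since $D$ is compact, finitely many points $p$ suffice to cover a neighbourhood of $D$, and adjoining the ordinary charts of the first step assembles the full family $\V$.

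The step I expect to be the main obstacle is the uniform derivative bound (iv) for the \emph{actual} metric rather than the model, above all at the deepest strata where several components $D_{i}$ meet: there one must control mixed derivatives of a product of Poincar\'e factors twisted by the perturbation $\exp(u)$, and it is here that the sharpness of the H\"older hypothesis on $u$ is indispensable. A secondary subtlety worth flagging is a mild circularity: the spaces $\Cka^{k,\alpha}$ are themselves \emph{defined} through $\V$, so the logically correct order is to fix the model quasi-coordinates first, define the H\"older spaces by means of them, and only then verify that the perturbed metric retains bounded geometry --- which is exactly what the construction of \cite{CY80,Kob84,TY86} accomplishes.
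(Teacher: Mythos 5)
Your construction is correct and is essentially the paper's own "proof": the paper does not prove this proposition at all but quotes it directly from \cite{CY80,Kob84,TY86}, and your covering-map charts $\phi_{x,y}$ on the punctured disc, their products at the crossing strata, the finitely many ordinary charts away from $D$, and the final H\"older-perturbation step absorbing $\exp(u)$ reproduce exactly the standard argument of those references. Your closing remark on the logical order --- fix the model quasi-coordinates first, define $\Cka^{k,\alpha}$ through them, and only then verify bounded geometry of the perturbed metric --- is also the correct reading of how the paper uses this result.
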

A complete Kähler manifold $(X',g)$ which admits a family $\V$ of local quasi coordinates satisfying the conditions of the proposition is called of \emph{bounded geometry} (of order $\infty$).

Although the coordinate system from Proposition \ref{quasicoor} is not a coordinate system in the ordinary sense because of the covering map involved, it makes sense to talk about the components of a tensor field on $X'$ (or a neighbourhood $U(p)=(\Delta^*)^k \times \Delta^{n-k}$) with respect to these "coordinates" $v^i$ by first lifting it to a tensor field $\Delta^n$. The behaviour of the tensor on $U(p)$ can thus be examined by looking at the lifted function in a neighbourhood of $(1,\ldots,1,*,\ldots,*)$ in $\Delta$.

We mention here the transition of tensors from a local coordinate function $z^i$, where a component $D_i$ is given by $\{z^i=0\}$ to a quasi-coordinate $v^i$, $|v^i|< R <1$, which is given by
\begin{equation}\label{eq:quasi}
  \frac{dz^i}{z^i\log|z^i|^2}= \frac{|v^i-1|^2}{(v^i-1)^2}\frac{dv^i}{1-|v^i|^2} \sim dv^i,
\end{equation}
because $v$ is bounded away from $1$. The respective equation for $\partial/\partial z^1$ reads as
\begin{equation}\label{eq:quasi_vf}
 z^i \log |z^i|^2  \frac{\dl}{\dl z^i} \sim \frac{\dl}{\dl v^i}. 
\end{equation}
This transformation rule is used for deriving estimates from H\"older regular functions and tensors that we will define now.
The Hölder spaces $\Cka^{k,\la}(X')$ are defined in terms of quasi-coordinates for sufficiently large values of $k$ and $0<\la<1$. The Hölder norms are computed in terms of the infinite number of quasi-coordinate systems. Following \cite{CY80, Kob84, TY86} we define:

\begin{definition}
\label{Hoelderspaces}
Let $k \in \NN_0$ and $\la \in (0,1)$ and denote by $C^k(X')$ the space of $k$-times differentiable functions $u: X' \to \C$.
For $u \in C^k(X')$ let   
$$
\Vert u \rVert_{k,\la} = \sup_{(V,v^1,\ldots,v^n) \in \V} \left( \sup _{z \in V} \sum_{|I|+|J|\leq k}{|\dl_v^J\ovl{\dl}_v^J u(z)|} 
+ \sup_{z,z' \in V} \sum_{|I|+|J|=k} \frac{|\dl_v^I\ovl{\dl}_v^J u(z)-\dl_v^I\ovl{\dl}_v^J u(z')|}{|z-z'|^{\la}} \right)
$$
be the $\Cka^{k,\la}$-norm of $u$, where $\dl_v^I\ovl{\dl}_v^J=\frac{\dl^{|I|+|J|}}{\dl v^I\dl\ovl{v}^J}$.
Then let
$$
\Cka^{k,\la}:=\Cka^{k,\la}(X') := \{u \in C^k(X')\;:\; \Vert u \Vert_{k,\la} <\infty\}.
$$
be the function space of $\Cka^{k,\la}$ functions on $X'$ with respect to $\V$.
\end{definition} 
 
In a similar way we can define $\Cka^{k,\la}$-tensors by pulling back via the quasi-coordinate maps.
Exterior derivatives and covariant derivatives of $\Cka^{k,\la}$-tensors are of the same type (with $k$ being replaced by $k-1$). The arguments from \cite{Kob84} using Hölder spaces $\Cka^{k,\la}$ (with respect to quasi-coordinates) are valid for all large fixed numbers $k\geq k_0$, where $k_0$ denotes some minimal degree. During the course of computations we will have to take derivatives, products and contractions of such tensors, arriving at $\Cka^{k,\la}$-tensors for some lower value of $k$. In each of these steps we will have to increase the lower bound $k_0$. Only finitely many such steps will be necessary. We will increase $k_0$ tacitly.

Let $\omega_{X'}$ be the complete Poincar\'e type K\"ahler form from the previous section.

\begin{lemma}
  Any $\Cka^{k,\la}$-tensor on $X'$ is globally square-integrable.
\end{lemma}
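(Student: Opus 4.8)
The plan is to reduce the statement to two uniform facts: a pointwise bound on the tensorial norm of the given tensor $T$, and the finiteness of the total volume of $(X',\omega_{X'})$. Writing $|T|$ for the pointwise norm of $T$ measured with $\omega_{X'}$ and $dV=\omega_{X'}^n/n!$, global square-integrability means $\int_{X'}|T|^2\,dV<\infty$. Since the $\Cka^{k,\la}$-norm controls in particular the $C^0$-part of $T$, it will suffice to prove that $|T|\le C$ on all of $X'$ and that $\mathrm{Vol}(X',\omega_{X'})<\infty$; the two bounds then combine into $\int_{X'}|T|^2\,dV\le C^2\,\mathrm{Vol}(X',\omega_{X'})<\infty$, so the higher-order part of the H\"older norm is not even needed.

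First I would establish the uniform pointwise bound. By definition, the components of $T$ lifted through every quasi-coordinate map in $\V$ are bounded by $\Vert T\Vert_{k,\la}$, uniformly over the whole family $\V$, and by Proposition \ref{quasicoor}(i) the images of these charts cover $X'$. Thus every point of $X'$ lies in a chart in which the Euclidean components of $T$ are controlled. In the same charts, Proposition \ref{quasicoor}(iv) gives $c^{-1}(\delta_{i\jbar})<(g_{i\jbar})<c(\delta_{i\jbar})$, so both $g$ and its inverse are uniformly bounded. Since the invariant norm $|T|$ is obtained by contracting the components of $T$ with copies of $g$ and $g^{-1}$, it differs from the Euclidean norm of those components by at most a fixed power of $c$ depending only on the tensor type. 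Hence $|T|\le C$ on all of $X'$, with $C$ depending only on $\Vert T\Vert_{k,\la}$, on $c$, and on the type of $T$. (If $T$ is $L$-valued, one uses in addition that $h|_{X'}$ is likewise of bounded geometry in the quasi-coordinates, which is built into the asymptotic (\ref{asym}).)

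Next I would verify the finiteness of the volume. Away from $D$ the manifold is relatively compact and, by Proposition \ref{quasicoor}(ii), covered by finitely many ordinary coordinate charts, so this region contributes a finite volume. Near $D$, on a neighborhood of the form $(\Delta^*)^k\times\Delta^{n-k}$, the metric $\omega_{X'}$ is quasi-isometric to the model Poincar\'e metric, so it is enough to integrate the model volume form. In each punctured-disc direction the factor $1/(|z^i|^2\log^2(1/|z^i|^2))$ becomes, after passing to polar coordinates $z^i=r\,e^{\mi\theta}$, an integrand comparable to $dr\,d\theta/(r\log^2 r)$, whose integral over $0<r<1/2$ converges by the substitution $t=\log r$. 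The transverse factor $\Delta^{n-k}$ contributes the ordinary finite Euclidean volume. Therefore the volume near $D$ is finite and, together with the relatively compact part, $\mathrm{Vol}(X',\omega_{X'})<\infty$.

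The genuinely delicate point is not any single estimate but the uniformity across the chart family: the quasi-coordinate cover of $X'$ is infinite and heavily overlapping, so the pointwise bound cannot be produced by summing contributions of individual charts. This is exactly what the chart-independence of $\Vert T\Vert_{k,\la}$ and of the constants $c,A_k$ in Proposition \ref{quasicoor} provides, and it is the reason the notion of bounded geometry is the right framework here. Once this uniformity is in hand, the lemma follows by combining a sup-bound on $|T|$ with the elementary finiteness of the Poincar\'e volume.
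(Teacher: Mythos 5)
Your proof is correct and follows essentially the same route as the paper's: a uniform pointwise bound on $|T|$ coming from the chart-independent $\Cka^{k,\la}$ bounds together with the uniform equivalence $c^{-1}(\delta_{i\jbar})<(g_{i\jbar})<c(\delta_{i\jbar})$ in quasi-coordinates, combined with the finiteness of the Poincar\'e volume of $X'$. You simply spell out the two ingredients (in particular the explicit volume estimate near $\DD$) that the paper's one-paragraph proof leaves implicit.
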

The {\em proof} follows immediately from the given uniform bounds of such tensors in terms of the above quasi-coordinate systems and the metric $\omega_{X'}$ with respect to the transition equations of the type \eqref{eq:quasi}: Take the pointwise norm of such a tensor with respect to the given metric. We consider the resulting function as a bounded $\Cka^{k,\la}$-tensor (for some value of $k$), which means that it is uniformly  bounded in terms of quasi-coordinate systems (or any other coordinate systems). Finally we use the boundedness of the volume of $X'$. \qed

\subsection{Hodge theory on the open fibers}
In this section, we summarize the Hodge theory on the complete, non-compact K\"ahler manifold $(X',\omega_{X'})$ for which we refer to the book of Marinescu and Ma \cite[Chapter 3]{MM07}. We consider the holomorphic vector bundle 
$E=K_{X'}\otimes L|_{X'}$ equipped with the hermitian metric $h|_{X'}$. 

For the operator $\dbar$ defined on the smooth, compactly supported forms $A_0^{0,q}(X',E) \subset L^{0,q}_{(2)}(X',E)$ we consider its formal adjoint $\dbar^*$. For $s_1 \in L_{(2)}^{0,q}(X',E)$, we can calculate $\dbar s_1$ in the sense of currents: $\dbar s_1$ is the currents defined by
$$
\<\dbar s_1,s_2\> = \<s_1,\dbar^*s_2\> \quad \mbox{ for } s_2 \in A_0^{(0,q+1)}(X',E).
$$ 
Then we have
\begin{lemma}\cite[Lemma 3.1.1]{MM07}
The operator $\dbar_{\max}$ defined by
\begin{eqnarray*}
&&\Dom (\dbar_{\max}) = \{s \in L_{(2)}^{0,q}(X',E)\; : \; \dbar s \in L_{(2)}^{0,q+1}(X',E)\}\\
&& \dbar_{\max} s = \dbar s \quad \mbox{ for } s \in \Dom(\dbar_{\max})
\end{eqnarray*}
is a densely defined, closed extension, called the \emph{maximal extension} of $\dbar.$
\end{lemma}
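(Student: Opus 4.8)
The plan is to check the three asserted properties in turn; the first two are immediate from the definitions, and only closedness requires an argument. First I would note that every $s \in A_0^{0,q}(X',E)$ lies in $\Dom(\dbar_{\max})$, because for a smooth compactly supported form the current $\dbar s$ coincides with the classical $\dbar$-derivative, which is again smooth and compactly supported and hence in $L_{(2)}^{0,q+1}(X',E)$. As $A_0^{0,q}(X',E)$ is dense in $L_{(2)}^{0,q}(X',E)$, this shows $\dbar_{\max}$ is densely defined, and since $\dbar_{\max}s = \dbar s$ on this subspace it is indeed an extension of $\dbar$.

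The substance of the lemma is the closedness of the graph. I would take a sequence $s_n \in \Dom(\dbar_{\max})$ with $s_n \to s$ in $L_{(2)}^{0,q}(X',E)$ and $\dbar_{\max}s_n \to t$ in $L_{(2)}^{0,q+1}(X',E)$, and show $s \in \Dom(\dbar_{\max})$ with $\dbar_{\max}s = t$. The mechanism is the defining duality $\<\dbar s_n, s_2\> = \<s_n, \dbar^* s_2\>$, valid for every $s_2 \in A_0^{0,q+1}(X',E)$. Passing to the limit, the left-hand side tends to $\<t, s_2\>$ because $\dbar_{\max}s_n \to t$ in $L^2$ and $s_2 \in L^2$, while the right-hand side tends to $\<s, \dbar^* s_2\>$ because $s_n \to s$ in $L^2$ and $\dbar^* s_2$ is smooth with compact support, hence lies in $L^2$; both limits are controlled by Cauchy--Schwarz. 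Thus $\<t, s_2\> = \<s, \dbar^* s_2\>$ for all test forms $s_2$, which is exactly the statement that the current $\dbar s$ is represented by $t$. Since $t \in L_{(2)}^{0,q+1}(X',E)$ by hypothesis, we conclude $s \in \Dom(\dbar_{\max})$ and $\dbar_{\max}s = \dbar s = t$.

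The only point demanding care --- though it is not deep --- is the continuity of the functional $\<\,\cdot\,, \dbar^* s_2\>$ used in the limit, which rests solely on $\dbar^* s_2$ being an $L^2$ form; this holds automatically since $s_2$ ranges over smooth compactly supported forms. At a higher level the entire assertion is an instance of the general principle that the maximal (distributional) extension of a densely defined operator coincides with the Hilbert-space adjoint of the formal adjoint restricted to test forms, and such adjoints are always closed; the graph computation above is simply the unwinding of this fact. I would stress that no completeness or bounded-geometry input is needed for this lemma itself --- those hypotheses on $(X',\omega_{X'})$ enter only later, when one compares $\dbar_{\max}$ with the minimal extension and develops the harmonic theory.
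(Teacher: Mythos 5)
Your proof is correct and follows exactly the standard argument: the paper itself offers no proof of this lemma, deferring entirely to \cite[Lemma 3.1.1]{MM07}, and the proof given there is the same one you reconstruct --- density of $\Dom(\dbar_{\max})$ because it contains the compactly supported smooth forms, and closedness because the distributional (current) derivative is stable under $L^2$ limits, via the duality $\<\dbar s_n, s_2\> = \<s_n, \dbar^* s_2\>$ against test forms. Your closing observations are also accurate: $\dbar_{\max}$ is indeed the Hilbert-space adjoint of the formal adjoint restricted to test forms (the paper records this as $\dbar_{\max} = (\dbar^*_H)^*_H$), and completeness of $(X',\omega_{X'})$ plays no role in this lemma, entering only later for Gaffney's theorem and the self-adjointness of the Gaffney extension of the Laplacian.
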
 
In the sequel, we work with the maximal extension and simply write $\dbar = \dbar_{\max}$. The Hilbert space adjoint of $\dbar_{\max}$ is denoted by $\dbar^*_H$. We note that $\dbar^*_H \subset (\dbar^*)_H$ and $\dbar_{\max} = (\dbar^*_H)^*_H$. From now on we work with  $\dbar^*_H$ and just write $\dbar^*$. 

The Laplacian $\Box=\dbar \dbar^*+\dbar^*\dbar$ is a densely defined, positive operator, so we one can consider its Friedrichs extension. But in the context of $L^2$ cohomology, its useful to consider another extension: 
\begin{proposition}\cite[Proposition 3.1.2]{MM07}
The operator defined by
\begin{eqnarray*}
&& \Dom (\Box) = \{s \in \Dom(\dbar) \cap \Dom(\dbar^*)\; : \; \dbar s \in \Dom(\dbar^*), \; \dbar^* s \in \Dom(\dbar)\},\\
&& \Box s = \dbar^* \dbar s + \dbar \dbar^* s \quad \mbox{ for } s \in \Dom(\Box)
\end{eqnarray*}
is a positive self-adjoint extension of the Laplacian, called the Gaffney extension.  
\end{proposition}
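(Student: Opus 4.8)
The plan is to recognize the operator displayed as the positive self-adjoint operator canonically attached to a closed nonnegative quadratic form, so that positivity and self-adjointness are supplied by the representation theorem and only the explicit description of the domain has to be checked by hand. The two cheap points I would dispatch first: on the core $A_0^{0,q}(X',E)$ the operator $\Box$ coincides with the differential-geometric Laplacian $\dbar\dbar^*+\dbar^*\dbar$, so it is genuinely an extension of the latter; and for $s\in\Dom(\Box)$ integration by parts gives $\langle\Box s,s\rangle=\|\dbar s\|^2+\|\dbar^* s\|^2\ge 0$, which is positivity.

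For the substance, I introduce the sesquilinear form $Q(s,t)=\langle\dbar s,\dbar t\rangle+\langle\dbar^* s,\dbar^* t\rangle$ with form domain $\Dom(Q)=\Dom(\dbar)\cap\Dom(\dbar^*)$. This domain is dense (it contains $A_0$), and $Q$ is symmetric and nonnegative. The one analytic input is that $Q$ is \emph{closed}: if $s_n\to s$ in $L^2$ while $\dbar s_n$ and $\dbar^* s_n$ are Cauchy, then because $\dbar=\dbar_{\max}$ and its Hilbert adjoint $\dbar^*$ are both closed operators, one gets $s\in\Dom(\dbar)\cap\Dom(\dbar^*)$ with $\dbar s_n\to\dbar s$ and $\dbar^* s_n\to\dbar^* s$. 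Kato's first representation theorem for closed nonnegative forms then produces a unique positive self-adjoint operator $\Box_Q$ with $\Dom(\Box_Q)\subset\Dom(Q)$ and $Q(s,t)=\langle\Box_Q s,t\rangle$ for all $t\in\Dom(Q)$. This already yields a positive self-adjoint extension of the Laplacian, and I would stress that no completeness of $\omega_{X'}$ enters at this stage; completeness is what later forces this extension to agree with the competing ones, not what makes it self-adjoint.

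It then remains to identify $\Box_Q$ with the explicitly described Gaffney operator. One inclusion is painless: if $s$ lies in the stated domain, so that $\dbar s\in\Dom(\dbar^*)$ and $\dbar^* s\in\Dom(\dbar)$, then moving the operators across in each term of $Q(s,t)$ using $\dbar^*=(\dbar)^*$ and $\dbar=(\dbar^*)^*$ gives $Q(s,t)=\langle\dbar^*\dbar s+\dbar\dbar^* s,\,t\rangle$, whence $s\in\Dom(\Box_Q)$ and the two operators agree on $s$. The reverse inclusion $\Dom(\Box_Q)\subset\{s:\dbar s\in\Dom(\dbar^*),\ \dbar^* s\in\Dom(\dbar)\}$ is where I expect the real difficulty to sit: from $Q(s,t)=\langle w,t\rangle$ for all $t\in\Dom(Q)$ one must extract \emph{separately} that $\dbar s\in\Dom(\dbar^*)$ and that $\dbar^* s\in\Dom(\dbar)$, i.e. the two coupled form terms have to be disentangled. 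The clean route is to pass through the Dirac-type operator $D=\dbar+\dbar^*$ on $\Dom(\dbar)\cap\Dom(\dbar^*)$: the identities $\dbar^2=0=(\dbar^*)^2$ give $D^2=\dbar\dbar^*+\dbar^*\dbar$ on the bidegree-graded space, $D$ is self-adjoint by the general theory of Hilbert complexes, and for self-adjoint $D$ the spectral theorem yields $\Dom(D^2)=\{s\in\Dom(D):Ds\in\Dom(D)\}$. Unwinding the latter condition with the bidegree splitting (so that $Ds\in\Dom(D)$ means $\dbar s\in\Dom(\dbar^*)$ and $\dbar^* s\in\Dom(\dbar)$, the remaining memberships being automatic from $\dbar^2=(\dbar^*)^2=0$) shows $\Dom(D^2)$ is exactly the Gaffney domain, and $\langle D^2 s,s\rangle=\|Ds\|^2=Q(s,s)$ forces $\Box_Q=D^2$ since a positive self-adjoint operator is determined by its form. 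Once the two domains are matched, the formula $\Box s=\dbar^*\dbar s+\dbar\dbar^* s$ is read off from the easy inclusion, completing the argument.
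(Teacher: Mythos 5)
Your argument is correct, but note that the paper offers no proof of this proposition at all: it is quoted verbatim from \cite[Prop. 3.1.2]{MM07}, so the only meaningful comparison is with the standard argument behind that citation, which the first half of your proposal reproduces exactly. The form $Q(s,t)=\langle\dbar s,\dbar t\rangle+\langle\dbar^*s,\dbar^*t\rangle$ on $\Dom(\dbar)\cap\Dom(\dbar^*)$, its closedness (immediate because both $\dbar_{\max}$ and its Hilbert adjoint are closed operators), the representation theorem, and the easy inclusion are all as in the literature; your remark that completeness of the metric is irrelevant at this stage is also correct and worth making, since the paper does use completeness later (Gaffney--Stokes, identification of Hilbert and formal adjoints) and a reader might otherwise think it enters here. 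Where you genuinely differ is in the hard inclusion $\Dom(\Box_Q)\subset\Dom(\Box)$: you invoke self-adjointness of $D=\dbar+\dbar^*$ on the graded space, which is itself a theorem about Hilbert complexes (Br\"uning--Lesch, \emph{Hilbert complexes}, J. Funct. Anal. 108 (1992), Lemma 2.3), not a lighter fact than the proposition being proved; essentially all the analytic content is hidden in that input, so it must be cited or proved, otherwise this step is a gap. A self-contained alternative that disentangles the two form terms directly: for $s\in\Dom(\Box_Q)$ with $Q(s,\cdot)=\langle w,\cdot\rangle$ and any $v\in\Dom(\dbar)$ in degree $q$, split $v=v'+v''$ with $v'$ the orthogonal projection onto the closed subspace $\Ker(\dbar)\subset\Dom(\dbar)$; then $v''\in\overline{\Ima(\dbar^*)}\subset\Ker(\dbar^*)$ by $(\dbar^*)^2=0$, so $v''\in\Dom(Q)$ with $\dbar v''=\dbar v$ and $\dbar^*v''=0$, whence $|\langle\dbar s,\dbar v\rangle|=|Q(s,v'')|=|\langle w,v''\rangle|\le\|w\|\,\|v\|$, proving $\dbar s\in\Dom(\dbar^*)$; the membership $\dbar^*s\in\Dom(\dbar)$ follows symmetrically. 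Your remaining bookkeeping (the memberships made automatic by $\dbar^2=(\dbar^*)^2=0$, the degree-orthogonality giving $\|Ds\|^2=Q(s,s)$, and uniqueness of the positive self-adjoint operator attached to a closed nonnegative form) is all sound.
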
  
We note that this result relies on Gaffney's generalisation of Stokes Theorem that will we use tacitly during our computation:
\begin{proposition}\cite{Ga54}
\label{GaffneyStokes}
Let $(M,g)$ be an orientable complete Riemannian manifold of real dimension $2n$ whose Riemann tensor is of class $C^2$. Let $\gamma$ be a $(2n-1)$-form on $M$ of class $C^1$ such that both 
$\gamma$ and $d \gamma$ are in $L^1$. Then 
$$
\int_M{d\gamma} =0.
$$
\end{proposition}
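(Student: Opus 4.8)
The plan is to prove this by the classical cut-off/exhaustion method, reducing the statement to the ordinary Stokes theorem for compactly supported forms together with two applications of the dominated convergence theorem. The only place where completeness of $(M,g)$ enters is in the construction of a good exhaustion by cut-off functions, and this is the sole genuinely nontrivial step.

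First I would construct a sequence of cut-off functions. Since $(M,g)$ is complete, the Hopf--Rinow theorem guarantees that closed metric balls are compact, so the distance function $\rho_0(x):=d(x,x_0)$ from a fixed base point $x_0$ is a proper, $1$-Lipschitz function. As $\rho_0$ need not be differentiable along the cut locus, I would mollify it (the smoothness of the metric, guaranteed by the $C^2$ regularity of the curvature, being exactly what legitimizes the mollification) to obtain a smooth proper exhaustion $\rho\colon M\to[0,\infty)$ with $|d\rho|_g\le 2$ everywhere. Fixing $\phi\in C^\infty([0,\infty))$ with $\phi\equiv 1$ on $[0,1]$, $\phi\equiv 0$ on $[2,\infty)$ and $|\phi'|\le 2$, I set $\chi_j:=\phi(\rho/j)$. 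Then each $\chi_j$ is smooth with compact support, $0\le\chi_j\le1$, $\chi_j\uparrow 1$ pointwise, and crucially
$$
|d\chi_j|_g=\frac{1}{j}\,|\phi'(\rho/j)|\,|d\rho|_g\le \frac{4}{j},
$$
with $d\chi_j$ supported in the region $\{\,j\le\rho\le 2j\,\}$ that escapes to infinity as $j\to\infty$.

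Next, since $\gamma$ is of class $C^1$ and $\chi_j$ is smooth with compact support, $\chi_j\gamma$ is a compactly supported $C^1$ form of degree $2n-1$ on the boundaryless manifold $M$, so the ordinary Stokes theorem gives $\int_M d(\chi_j\gamma)=0$. Expanding by the Leibniz rule, $d(\chi_j\gamma)=d\chi_j\wedge\gamma+\chi_j\,d\gamma$, whence
$$
\int_M \chi_j\,d\gamma=-\int_M d\chi_j\wedge\gamma.
$$
It remains to pass to the limit $j\to\infty$ on both sides. On the left, $0\le\chi_j\le1$ and $\chi_j\to1$ pointwise while $d\gamma\in L^1$, so dominated convergence gives $\int_M\chi_j\,d\gamma\to\int_M d\gamma$. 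On the right, the integrand is estimated pointwise by $|d\chi_j\wedge\gamma|_g\le|d\chi_j|_g\,|\gamma|_g\le \tfrac{4}{j}\,|\gamma|_g$, which is dominated by the fixed $L^1$ function $4\,|\gamma|_g$ and tends to $0$ pointwise; hence $\int_M d\chi_j\wedge\gamma\to0$, again by dominated convergence. Combining the two limits yields $\int_M d\gamma=0$.

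The main obstacle is the construction in the first step: producing smooth cut-offs whose differentials are uniformly small and whose supports march off to infinity. The naive choice $\phi(\rho_0/j)$ fails because the distance function is only Lipschitz, so smooth Stokes does not apply directly; the real content is that completeness (via Hopf--Rinow) makes $\rho_0$ proper and $1$-Lipschitz, which after a curvature-controlled smoothing furnishes a genuine smooth exhaustion with bounded gradient. Everything after that---Stokes on compact supports, the Leibniz expansion, and the two dominated-convergence limits---is routine and uses the hypotheses $\gamma,\,d\gamma\in L^1$ in precisely the expected way.
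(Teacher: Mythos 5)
The paper does not actually prove this proposition; it is quoted from Gaffney's 1954 paper and used as a black box, so there is no internal argument to compare against. Your proof is the standard cut-off/exhaustion argument, which is in substance how this result is proved in the literature, and the skeleton is correct: compactly supported Stokes plus Leibniz gives $\int_M \chi_j\,d\gamma=-\int_M d\chi_j\wedge\gamma$, and the two dominated-convergence passages (using $d\gamma\in L^1$ on the left, $|d\chi_j|_g\le 4/j$ and $\gamma\in L^1$ on the right) are exactly right.

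The one soft spot is the step you yourself identify as the crux. On a Riemannian manifold you cannot simply ``mollify'' the distance function and retain a global gradient bound: convolution is not globally defined, and patching chart-by-chart mollifications with a partition of unity can destroy the Lipschitz constant. The fact you need --- on a complete manifold there is a smooth proper exhaustion $\rho$ with $|d\rho|_g\le 2$ (indeed $\le 1+\varepsilon$) --- is true, but it is a theorem (Greene--Wu type approximation of Lipschitz functions by smooth ones with almost the same Lipschitz constant), and it should be invoked as such rather than waved through as mollification. Alternatively you can avoid smoothing entirely: take $\chi_j=\phi(\rho_0/j)$ with $\rho_0$ the genuine distance function; then $\chi_j\gamma$ is a compactly supported Lipschitz form, and Stokes' theorem holds for Lipschitz forms by Rademacher differentiability plus approximation, after which your limit argument is unchanged. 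Finally, your parenthetical that the $C^2$ regularity of the Riemann tensor is ``exactly what legitimizes the mollification'' misreads that hypothesis: it is an artifact of the low regularity setting of Gaffney's original paper (his metrics are not assumed smooth), not an ingredient in constructing the cut-offs; for a smooth metric no curvature information enters anywhere in the argument. With the smoothing step properly attributed or replaced by the Lipschitz variant, your proof is complete.
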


We define the space of harmonic forms $\Harm^{0,q}(X',E) \subset L_{(2)}^{0,q}(X',E)$ by
$$
\Harm^{0,q}(X',E) := \Ker (\Box) = \{s \in \Dom(\Box)\;:\: \Box s =0\}.
$$
We see that
$$
\Harm^{0,q}(X',E) = \Ker(\dbar) \cap \Ker(\dbar^*).
$$
The $q$-th $L^2$ Dolbeault cohomology is defined by
$$
H^{0,q}_{(2)}(X',E):= \bigslant{\Ker(\dbar) \cap L_{(2)}^{0,q}(X',E)}{\Ima(\dbar) \cap  L_{(2)}^{0,q}(X',E)}
$$
which is of course the same as the group $H^q_{(2)}(X',K_{X'}^p(L|_{X'}))$ from Proposition \ref{pr:L2res}, because the $L^2$-cohomology can be also computed by smooth forms.
Form this we also get
\begin{proposition}
The $L^2$-Dolbeault cohomology $H^{0,q}_{(2)}(X',E) $ is finite-dimensional and we have
$$
H^{0,q}_{(2)}(X',E) \cong \Harm^{0,q}(X',E). 
$$
Moreover, the images of $\dbar$ and $\dbar^*$ are closed in $L_{(2)}(X',E)$ and thus there exists a constant $C>0$ such that
\begin{eqnarray}
\label{fundest0}
||s||^2_{L^2} \leq C \left(  ||\dbar s||^2_{L^2} + ||\dbar^* s||^2_{L^2}\right)
\end{eqnarray}
for all $s \in \Dom(\dbar) \cap \Dom{\dbar^*}\cap L^{0,q}_{(2)}(X',E), \,s \perp \Harm^{0,q}(X',E)$. 
\end{proposition}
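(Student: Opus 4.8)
\emph{Proof proposal.} The plan is to regard the $L^2$ Dolbeault complex $(\Dom(\dbar),\dbar)$ on the complete K\"ahler manifold $(X',\omega_{X'})$, with values in $E=K_{X'}\otimes L|_{X'}$, as a Hilbert complex in the sense of the abstract $L^2$-Hodge theory of \cite{MM07}, and then to feed in the finite-dimensionality already obtained from the compactification in order to upgrade the always-available weak (reduced) decomposition to the strong one with closed ranges, from which the coercive estimate (\ref{fundest0}) follows formally.

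First I would record the finite-dimensionality. Identifying $E$-valued $(0,q)$-forms with $L$-valued $(n,q)$-forms, the unreduced $L^2$-Dolbeault groups $H^{0,q}_{(2)}(X',E)$ are exactly the cohomology of the complex of global sections of the fine resolution $(\A^{n,\bullet}_{(2)}(L|_{X'}),\dbar)$ of Proposition \ref{pr:L2res}. Since fine sheaves are acyclic and $X$ is compact, this cohomology equals the sheaf cohomology $H^q(X,\Omega^n(\log D)(L))$, which is finite-dimensional in every degree. Next I would invoke the general Hodge--Kodaira theory for the Gaffney extension on a complete manifold (see \cite[Ch.~3]{MM07}), valid here because $\omega_{X'}$ is complete of bounded geometry (Proposition \ref{quasicoor}). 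For any complete manifold this yields the weak orthogonal decomposition
\[ L^{0,q}_{(2)}(X',E)=\Harm^{0,q}(X',E)\oplus\overline{\Ima\dbar}\oplus\overline{\Ima\dbar^*} \]
together with the Hodge isomorphism between the \emph{reduced} cohomology $\Ker\dbar/\overline{\Ima\dbar}$ and $\Harm^{0,q}(X',E)$. The natural surjection from the unreduced cohomology $H^{0,q}_{(2)}(X',E)$ onto the reduced one has kernel $\overline{\Ima\dbar}/\Ima\dbar$.

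The crucial step is then to upgrade the weak decomposition to closed ranges. By the first step the unreduced cohomology is finite-dimensional, hence so is the kernel $\overline{\Ima\dbar}/\Ima\dbar$ of the above surjection. Here I would use the standard functional-analytic fact that for a closed, densely defined operator a range of finite codimension in its closure is already closed; applying this to $\dbar$ in each degree gives $\Ima\dbar$ closed, so $\overline{\Ima\dbar}/\Ima\dbar=0$ and the desired isomorphism $H^{0,q}_{(2)}(X',E)\cong\Harm^{0,q}(X',E)$. For the adjoint pair, closedness of $\Ima\dbar$ is equivalent to closedness of $\Ima\dbar^*$, so both ranges are closed. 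Finally the estimate (\ref{fundest0}) is a formal consequence: on the Hilbert space $V:=(\Harm^{0,q})^\perp\cap\Dom(\dbar)\cap\Dom(\dbar^*)$ the operator $s\mapsto(\dbar s,\dbar^* s)$ has kernel $\Harm^{0,q}$, hence is injective on $V$, and its range lies in the closed, mutually orthogonal sum $\Ima\dbar\oplus\Ima\dbar^*$ and is therefore closed; a closed operator that is injective with closed range is bounded below, which is precisely $||s||^2_{L^2}\le C(||\dbar s||^2_{L^2}+||\dbar^* s||^2_{L^2})$.

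The main obstacle is this third step. On a general complete manifold the reduced and unreduced $L^2$-cohomologies genuinely differ and the ranges need not be closed, so the entire argument hinges on importing finite-dimensionality from the snc-compactification through Proposition \ref{pr:L2res} --- which is exactly where the Poincar\'e-type asymptotics (\ref{asym}) and the fine-resolution property are really used --- and then converting it into closedness via the semi-Fredholm fact above. The only other point I expect to require care is the verification that the $L^2$-complex is indeed a Hilbert complex to which the abstract machinery applies, i.e.\ the completeness of $\omega_{X'}$ and the self-adjointness of the Gaffney Laplacian; both are guaranteed by the bounded-geometry setup recalled above and by \cite[Ch.~3]{MM07}.
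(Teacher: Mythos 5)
Your proposal is correct and follows essentially the same route as the paper: finite-dimensionality is imported from the snc compactification via Proposition \ref{pr:L2res}, this is converted into closedness of $\Ima(\dbar)$ and $\Ima(\dbar^*)$ by the ``finite codimension in the closure implies closed range'' principle, and the estimate (\ref{fundest0}) is deduced as a formal consequence of the closed ranges. This is precisely the paper's proof, except that where you supply the two functional-analytic arguments yourself, the paper simply cites \cite[Prop.\ 3.5, Cor.\ 3.6]{BZ98} for the closed-range/harmonic-isomorphism step and \cite[Prop.\ 3.1.6]{MM07} for the estimate.

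One inference in your last step is, as written, invalid and should be repaired. You claim that the range of $T\colon s\mapsto(\dbar s,\dbar^* s)$ on $V=(\Harm^{0,q})^{\perp}\cap\Dom(\dbar)\cap\Dom(\dbar^*)$ is closed \emph{because} it lies inside the closed subspace $\Ima(\dbar)\oplus\Ima(\dbar^*)$; containment in a closed subspace never implies closedness. What is true, and what completes your argument, is that the range of $T$ \emph{equals} $\Ima(\dbar)\oplus\Ima(\dbar^*)$ (images out of degree $q$): given $w_1\in\Ima(\dbar)$, pick any $t\in\Dom(\dbar)$ with $\dbar t=w_1$ and replace it by its component $t_2$ in $\overline{\Ima(\dbar^*)}$ from the weak decomposition --- the harmonic part and the part in $\overline{\Ima(\dbar)}\subset\Ker(\dbar)$ do not contribute to $\dbar t$, while $t_2\in\overline{\Ima(\dbar^*)}\subset\Ker(\dbar^*)$ gives $\dbar^* t_2=0$ and $t_2\perp\Harm^{0,q}$, so $t_2\in V$ and $Tt_2=(w_1,0)$; symmetrically one finds $t_1\in V$ with $Tt_1=(0,w_2)$, and $T(t_1+t_2)=(w_1,w_2)$. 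Alternatively, argue as in \cite[Prop.\ 3.1.6]{MM07}: closedness of $\Ima(\dbar)$ yields a bounded solution operator by the closed graph theorem, and writing $s=\dbar u+\dbar^* v$ one gets $\|s\|^2=\<\dbar^* s,u\>+\<\dbar s,v\>\le C\,\|s\|\left(\|\dbar s\|+\|\dbar^* s\|\right)$, which is (\ref{fundest0}). With either patch your proof is complete.
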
 
\begin{proof}
By Proposition \ref{pr:L2res}, we know that we can identify the $L^2$-Dolbeault cohomology with the sheaf cohomology of the locally free sheaf $\Omega^p(\log D) \otimes K_X$ on the compactification 
$X$, hence it must be finite-dimensional. This implies immediately that the image of $\dbar$ (and hence also of $\dbar^*$) are closed (\cite[Proposition 3.5]{BZ98}) and the isomorphism with the space of harmonic forms (\cite[Corollary 3.6]{BZ98}). The stated estimate for the $L^2$-norm  for a section $s$ that is orthogonal to the space of harmonic forms follows from the closedness of $\Ima(\dbar)$ and $\Ima(\dbar^*)$ (\cite[Prop. 3.1.6]{MM07}).
\end{proof}
\begin{corollary}
\label{GreenOp}
We have the strong Hodge decomposition which is orthogonal:
\begin{eqnarray*}
L^{0,q}_{(2)}(X',E) &=& \Harm^{0,q}(X',E) \oplus \Ima(\Box) = \Harm^{0,q}(X',E) \oplus \Ima(\dbar\dbar^*) \oplus \Ima(\dbar^*\dbar),\\
\Ker(\dbar) \cap L^{0,q}_{(2)}(X',E)  &=&  \Harm^{0,q}(X',E) \oplus \left( \Ima(\dbar) \cap L^{0,q}_{(2)}(X',E) \right).
\end{eqnarray*}
Moreover, there exists a bounded operator $G$ on $L^{0,q}_{(2)}(X',E)$, called the \emph{Green operator}, such that
$$
\Box G = G \Box = \Id -H, \quad HG=GH=0,
$$
where $H$ is the orthogonal projection form $L^{0,q}_{(2)}(X',E)$ onto $\Harm^{0,q}(X',E)$.
\end{corollary}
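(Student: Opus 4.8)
The plan is to treat this as a purely functional-analytic consequence of the facts already collected in the preceding Proposition: that $\Box$ admits the self-adjoint Gaffney extension, that $\Harm^{0,q}(X',E)=\Ker(\Box)$ is finite-dimensional, that $\Ima(\dbar)$ and $\Ima(\dbar^*)$ are closed in $L^{0,q}_{(2)}(X',E)$, and that the fundamental estimate \eqref{fundest0} holds on the orthogonal complement of $\Harm^{0,q}(X',E)$. First I would record the abstract spectral splitting of the self-adjoint operator $\Box$, namely $L^{0,q}_{(2)}(X',E)=\Ker(\Box)\oplus\ovl{\Ima(\Box)}$, and then remove the closure bar. To do so I would show the coercive bound $\|s\|\le C\|\Box s\|$ on $\Dom(\Box)\cap\Harm^{0,q}(X',E)^{\perp}$: integration by parts, legitimate for the Gaffney extension, gives $\<\Box s,s\>=\|\dbar s\|^2+\|\dbar^*s\|^2$, so \eqref{fundest0} yields $\|s\|^2\le C\<\Box s,s\>\le C\|\Box s\|\,\|s\|$. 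This forces $\Ima(\Box)$ to be closed, and with self-adjointness produces the first decomposition $L^{0,q}_{(2)}(X',E)=\Harm^{0,q}(X',E)\oplus\Ima(\Box)$.

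For the finer splitting I would invoke the closedness of $\Ima(\dbar)$ and $\Ima(\dbar^*)$. From $\Ker(\dbar)=(\Ima(\dbar^*))^{\perp}$ together with closed range one gets $L^{0,q}_{(2)}(X',E)=\Harm^{0,q}(X',E)\oplus\Ima(\dbar)\oplus\Ima(\dbar^*)$, and intersecting with $\Ker(\dbar)$ gives the second displayed decomposition. To identify the two middle summands with $\Ima(\dbar\dbar^*)$ and $\Ima(\dbar^*\dbar)$ I would use the trivial inclusions $\Ima(\dbar\dbar^*)\subseteq\Ima(\dbar)$ and $\Ima(\dbar^*\dbar)\subseteq\Ima(\dbar^*)$, and conversely that any element $\dbar\alpha$ can, after writing $\alpha=\alpha_{\Harm}+\dbar\beta+\dbar^*\gamma$, be rewritten as $\dbar\dbar^*\gamma$; their mutual orthogonality follows from $\Ima(\dbar)\perp\Ima(\dbar^*)$.

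Finally I would define the Green operator by $G=0$ on $\Harm^{0,q}(X',E)$ and $G=(\Box|_{\Harm^{\perp}})^{-1}$ on $\Ima(\Box)$. The coercive bound shows this inverse is well defined and bounded with $\|G\|\le C$, so $G$ extends to a bounded operator on $L^{0,q}_{(2)}(X',E)$. The identities $\Box G=G\Box=\Id-H$ and $HG=GH=0$, with $H$ the orthogonal projection onto $\Harm^{0,q}(X',E)$, then follow by checking them separately on the two orthogonal summands.

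I expect the only genuine subtlety, as opposed to routine Hilbert-space bookkeeping, to be the domain issues forced by the non-compactness of $X'$: one must ensure that the decompositions respect $\Dom(\Box)$, that $G$ actually maps into $\Dom(\Box)$ so that $\Box G$ is meaningful, and that $\dbar$, $\dbar^*$ and $\Box$ may be commuted past $G$ and the projection $H$. Here the self-adjointness of the Gaffney extension and the closed-range property established in the previous section are precisely what make these manipulations legitimate, so the argument is the abstract Hodge package applied to the fixed extensions rather than any new estimate on $X'$.
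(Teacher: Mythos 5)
Your proof is correct, but it takes a different route from the paper: the paper gives no argument at all and simply cites \cite[Theorem 3.1.8]{MM07}, whereas you reconstruct the abstract Hodge package from scratch out of the ingredients of the preceding proposition. Your derivation is sound at each step: the identity $\<\Box s,s\>=\|\dbar s\|^2+\|\dbar^*s\|^2$ is exactly what the Gaffney extension (built from the Hilbert-space adjoint $\dbar^*_H$, with $\dbar_{\max}=(\dbar^*_H)^*_H$) provides without boundary terms; combined with the estimate (\ref{fundest0}) it gives the coercive bound, hence closedness of $\Ima(\Box)$ and boundedness of $G=(\Box|_{\Harm^\perp})^{-1}\oplus 0$; the splitting $L^{0,q}_{(2)}=\Ker(\dbar)\oplus\Ima(\dbar^*)$ and $\Ker(\dbar)=\Harm\oplus\Ima(\dbar)$ follows from closed range plus $\Ker(\dbar)=\Ima(\dbar^*)^\perp$ and $\Ima(\dbar)^\perp=\Ker(\dbar^*)$; and comparing the two orthogonal decompositions of $\Harm^\perp$ identifies $\Ima(\Box)$ with $\Ima(\dbar\dbar^*)\oplus\Ima(\dbar^*\dbar)$. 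Two small points you should make explicit if you write this up: the rewriting of $\dbar\alpha$ as $\dbar\dbar^*\gamma$ uses the analogous decomposition in degree $q-1$, so you need the closed-range facts there too (the preceding proposition does supply them for all $q$); and in proving closedness of $\Ima(\Box)$ you should replace an approximating sequence $s_n$ by $s_n-Hs_n$, which stays in $\Dom(\Box)$ with the same image, before applying the coercive bound and the closedness of the operator $\Box$. What your route buys is transparency: it shows the corollary is pure Hilbert-space theory given the previous proposition (finite-dimensionality of $\Harm$ is not even needed, only closed range), whereas the paper's citation is shorter and leans on the literature where the same argument is carried out.
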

\begin{proof}
See \cite[Theorem 3.1.8]{MM07}.
\end{proof}
\begin{remark}
By the usual elliptic regularity theory, we get that harmonic sections are in fact smooth, $G$ maps smooths forms to smooth forms so that we can cut down the Hodge decomposition to the space of smooth $L^2$ sections:
$$
A^{0,q}_{(2)}(X',E) = \Harm^{0,q}(X',E) \oplus \Ima(\Box).
$$
\end{remark}

\subsection{Families of logarithmic pairs}
\label{famlogpairs}
Let $(\X,\DD)$ be a smooth log pair, i.e. $\X$ a complex manifold and $\DD \subset \X$ a reduced snc divisor. The boundary divisor $\DD$ is written as a sum of its irreducible components $\DD = \D_1+\ldots + \DD_k$. If $I \subset \{1,\ldots,k\}$ is any non-empty subset, we consider the intersection $\DD_I:=\cap_{i \in I}\DD_i$.
\begin{definition}\cite[Def. 3.4]{Ke13}
For a smooth log pair $(\X,\DD)$ and a proper holomphic submersion $f: \X \to S$ we say that $\DD$ is relatively snc or that $f$ is a snc morphism if for any set $I$ with $D_I \neq \emptyset$ all the restricted morphisms $f|_{\DD_I}\to S$ are also smooth of relative dimension $\dim \X - \dim S-|I|$.
\end{definition} 
If $s \in S$ is any point, set $X_s=f^{-1}(s)$ and $D_s:= \DD \cap X_s$. Then $X_s$ is smooth and $(X_s,D_s)$ is a snc pair. Moreover, the number of irreducible components of $D_s$ is also $k$ which is the number of irreducible components of $\DD$. This excludes phenomena like the deformation of the smooth hyperbola $\{wz=s \neq 0\}$ into the snc divisor $\{zw=0\}$ that has two components. The reason for this will became clear later on. Moreover, the definition implies 
\begin{lemma}
For a smooth log pair $(\X,\DD)$ and a smooth snc morphism $f:\X \to S$ we can find (after shrinking $S$ to a contractible neighbourhood) a differentiable trivialisation
$\Phi: \X \xrightarrow{\sim} X \times S$ such that the restriction $\Phi|_{\DD}:\DD \xrightarrow{\sim}\ D \times S$ is also a smooth trivialisation. 
\end{lemma}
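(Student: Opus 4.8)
The statement is a relative version of the Ehresmann fibration theorem adapted to the boundary divisor, and the plan is to produce, after shrinking $S$ to a ball $B$ around a base point $s_0$, smooth lifts $v_1,\dots,v_m$ of the coordinate vector fields $\partial/\partial s^j$ on $S$ to vector fields on $\X$ that are \emph{simultaneously tangent to every component} $\DD_i$, and then to integrate their flows. Writing $X:=X_{s_0}$ and $D:=D_{s_0}$, parallel transport along the radial segments of $B$ will assemble diffeomorphisms $P_s\colon X\to X_s$ into the desired map $\Phi^{-1}\colon X\times B\to\X$, and tangency of the lifts to each $\DD_i$ will force $\Phi$ to respect the divisor.

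For the construction of the lifts I use the snc morphism hypothesis: near any point $p\in\DD$ it yields local holomorphic coordinates $(z,s)=(z^1,\dots,z^n,s^1,\dots,s^m)$ with $f(z,s)=s$ in which every component of $\DD$ through $p$ is a coordinate hyperplane $\{z^i=0\}$. This normal form is obtained by a standard completion argument from the linear independence of the defining differentials $dz^i$ together with the $df^a$, which is exactly what the submersivity of each $f|_{\DD_I}$ guarantees. In such a chart I lift $\partial/\partial s^j$ to the coordinate field $\partial/\partial s^j$, which has no $z$-components and hence is tangent to every such hyperplane while projecting correctly under $df$; away from $\DD$ any local lift will do. I then glue these local lifts by a partition of unity $\{\rho_\alpha\}$ subordinate to the covering. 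The resulting $v_j=\sum_\alpha\rho_\alpha v_j^{(\alpha)}$ still satisfies $df(v_j)=\partial/\partial s^j$ because $\sum_\alpha\rho_\alpha\equiv 1$, and at any $x\in\DD_i$ each contributing $v_j^{(\alpha)}(x)$ lies in the linear subspace $T_x\DD_i\subset T_x\X$, so the combination remains tangent to $\DD_i$; thus $v_j$ is tangent to every component, hence to every stratum $\DD_I$.

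For the integration step I exploit properness: since $f$ is proper, $f^{-1}$ of the compact radial segment $\gamma_s(t)=s_0+t(s-s_0)$, $t\in[0,1]$, is compact, so the autonomous horizontal lift $\sum_j(s-s_0)^j v_j$ cannot escape in finite time and its time-one flow defines a diffeomorphism $P_s\colon X\to X_s$. Setting $\Phi^{-1}(x,s):=P_s(x)$ produces a bijection $X\times B\to\X$ that is smooth together with its inverse by the smooth dependence of ODE solutions on initial data and on the parameter $s$; after shrinking $S=B$ this is the asserted trivialisation. Because each $v_j$ is tangent to $\DD_i$, its flow preserves $\DD_i$, so $P_s$ carries $\DD_i\cap X$ onto $\DD_i\cap X_s$ and in particular $D$ onto $D_s$; hence $\Phi$ restricts to a smooth trivialisation $\Phi|_\DD\colon\DD\xrightarrow{\sim}D\times B$, and likewise of every stratum.

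The main obstacle is the first step, arranging the lifts to be tangent to all components $\DD_i$ at once. This is precisely where the snc morphism hypothesis enters, via the local product normal form, and where one must verify that the partition-of-unity combination preserves tangency: this succeeds only because tangency to $\DD_i$ is a pointwise \emph{linear} condition, while the condition $df(v_j)=\partial/\partial s^j$ is affine with weights summing to one. The remaining analytic point, completeness of the lifted flows over $B$, is standard and rests solely on the properness of $f$.
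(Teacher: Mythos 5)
Your proof is correct. The paper itself gives no argument for this lemma --- it is asserted as an immediate consequence of the snc-morphism definition (the underlying standard reference being logarithmic deformations in the sense of \cite{Ka78}) --- and your construction is exactly the divisor-adapted Ehresmann argument this assertion implicitly rests on: adapted coordinates from the submersivity of the $f|_{\DD_I}$, lifts tangent to all components glued by a partition of unity (tangency surviving because it is a pointwise linear condition), and flow completeness from properness.
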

The lemma says that  we can trivialise the pair $(X_s,D_s)$ in a differentiable way. By restriction to $\X':=\X\setminus \DD$ we find a smooth trivialisation of $\X'\cong X'\times S$.

Given a smooth log pair $(X,D)$ and a point $x$, there exists an open neighbourhood $U=U(x)$ and holomorphic coordinates $z_1, \dots,z_n$ such that $D\cap U=\{z_1\cdots z_r\}$ for some $0\leq r \leq n$. The following is the relative analogue of this fact:

\begin{lemma}\cite[Lemma 3.7]{Ke13}
Let $(\X,\DD)$ be a smooth snc pair and $f: \X \to S$ an snc morphism. For any point $x \in \X$, there exist open neighbourhoods $V=V(f(x))\subset S$  and $U=U(x) \subset f^{-1}(V) \subset \X$ and holomorphic coordinates $z_1,\dots, z_n, z_{n+1},\dots, z_{n+m}$ around $x$ and $s_1,\dots,s_m$ around $f(x)$ and a number $0\leq r\leq n$ such that the following holds:
\begin{itemize}
\item[(i)] We have $z_{n+i}=s_i \circ f$ for all indices $1\le i\leq m$
\item[(ii)] $\DD \cap \X=\{z_{1}\cdots z_r=0\}$.
\end{itemize}
\end{lemma}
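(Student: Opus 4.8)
The plan is to reduce the statement to a single transversality (rank) computation at $x$ and then invoke the holomorphic inverse function theorem. First I would record what the relative snc hypothesis gives at the point $x$. Let $I=\{i: x\in\DD_i\}$ and, after relabelling, assume $I=\{1,\dots,r\}$; since $\DD$ is snc there are holomorphic functions $g_1,\dots,g_r$ on a neighbourhood of $x$ with $\DD_i=\{g_i=0\}$ and $dg_1(x),\dots,dg_r(x)$ linearly independent, while every remaining component $\DD_j$, $j\notin I$, avoids a small enough neighbourhood of $x$. Because $f$ is a submersion I may also choose coordinates $s_1,\dots,s_m$ on a neighbourhood $V$ of $f(x)$; set $z_{n+i}:=s_i\circ f$, so that $f^*ds_1,\dots,f^*ds_m$ are linearly independent and span the annihilator $W$ of $\ker(df_x)=T_xX_{f(x)}$.

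The key step is to show that the snc morphism hypothesis is \emph{exactly} the statement that $\Phi:=(g_1,\dots,g_r,\,s_1\circ f,\dots,s_m\circ f)$ has surjective differential at $x$. Here I would compute annihilators. By definition $f|_{\DD_I}\colon \DD_I\to S$ is a submersion at $x$ iff $df_x$ carries $T_x\DD_I=\bigcap_{i\in I}\ker dg_i(x)$ onto $T_{f(x)}S$, which holds iff $T_x\DD_I+\ker(df_x)=T_x\X$. Passing to annihilators in $T^*_x\X$, and using that the conormal space of $\DD_I$ is $\mathrm{span}(dg_1(x),\dots,dg_r(x))$ while $\ker(df_x)^\perp=W$, this is equivalent to $\mathrm{span}(dg_1(x),\dots,dg_r(x))\cap W=0$, i.e. to the $dg_i(x)$ being linearly independent modulo $W$. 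Combined with the independence of the $f^*ds_j$, this says precisely that the $r+m$ covectors $dg_1(x),\dots,dg_r(x),f^*ds_1,\dots,f^*ds_m$ are linearly independent, i.e. $\mathrm{rank}\,d\Phi_x=r+m$.

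Granting this, I would finish by completing $\Phi$ to a chart. Setting $z_i:=g_i$ for $1\le i\le r$, the differentials $dz_1,\dots,dz_r,dz_{n+1},\dots,dz_{n+m}$ are linearly independent at $x$, so the holomorphic inverse function theorem lets me pick further holomorphic functions $z_{r+1},\dots,z_n$ whose differentials complete these to a basis of $T^*_x\X$; then $(z_1,\dots,z_{n+m})$ is a holomorphic coordinate system on a neighbourhood $U\subset f^{-1}(V)$ of $x$. Condition (i) holds by construction, and after shrinking $U$ so that it meets no $\DD_j$ with $j\notin I$ one gets $\DD\cap U=\{z_1=0\}\cup\dots\cup\{z_r=0\}=\{z_1\cdots z_r=0\}$ (vacuously $\DD\cap U=\emptyset$ when $r=0$), which is (ii).

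The only genuine content lies in the middle paragraph; the steps on either side of it are the standard submersion normal form and the completion of a partial coordinate system. The main obstacle is therefore the careful identification of the relative snc condition with the maximal-rank condition on $\Phi$: one must use that the dimension count $\dim\X-\dim S-|I|=n-r$ forces $f|_{\DD_I}$ to be a genuine submersion at $x$ (and not merely smooth of some relative dimension), so that the transversality $T_x\DD_I+\ker(df_x)=T_x\X$ is actually available. Once that equivalence is established, the adapted chart is produced mechanically.
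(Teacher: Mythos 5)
Your proof is correct. Note that the paper offers no argument of its own for this statement --- it is quoted verbatim, with citation, from Kebekus \cite[Lemma 3.7]{Ke13} --- so there is nothing internal to compare against; your route (identifying the relative snc condition for the maximal set $I$ through $x$ with the rank condition on $\Phi=(g_1,\dots,g_r,s_1\circ f,\dots,s_m\circ f)$ via annihilators, then completing to a chart by the inverse function theorem) is the standard proof and matches what the cited source does.
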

In the following, we will make use of these \emph{adapted} coordinates tacitly. In the applications, we also consider the following families:
\begin{definition}\label{deffamlogpairs}
A holomorphic family of log canonically polarized pairs consists of a smooth snc pair $(\X,\DD)$ and a snc morphism $f:\X \to S$ sucht that $K_{X_s}+D_s$ is ample for each $s \in S$. 
\end{definition}
In particular, our families are \emph{logarithmic deformations} in the sense of \cite{Ka78} and generalise the deformations considered in \cite{Sch98} to the case of a singular snc divisors. 
\begin{remark}
By the $C^{\infty}$ trivialisation $\X'\cong X'\times S$ and the fact that nearby fibers $(X_s,g_s)$ and bundles $E'_s=K_{X'_s}\otimes L|_{X'_s}$ are quasi-isometric imply that the $L^2$-spaces 
$L^{0,q}_{(2)}(X'_s,E'_s)$ and the domain of the Laplacians $\Dom(\Box_s)$ do not depend on the fiber $X'_s.$ The same holds for the H\"older spaces $\Cka^{k,\la}(X'_s)$.
\end{remark}

\subsection{$L^2$-integrable \ks\ forms}\label{se:famke}

We consider a holomorphic family of smooth log pairs  $\DD \overset{i}{\hookrightarrow} \X \overset{f}{\rightarrow} S$ and a hermitian holomorphic line bundle $(\LL,h)$ on $\X$ whose inverse metric has Poincar\'e type singularities along $\DD$ as already described in the introduction. Moreover, we recall that we have defined on $\X'$ the global $(1,1)$-form 
$$
\omega_{\X'} = -\mi\partial\overline\partial (\log h)= \mi \dl \dbar \log(h^{C^{\infty}}_{\LL^{-1}})-\sum_{i=1}^l{\mi\dl \dbar\log(||\sigma_i||^2 \log^2||\sigma_i||^2})+\mi\dl \dbar u,
$$
whose restrictions to the open fibers $X'_s$ give a smooth family of Poincar\'e type K\"ahler forms $\omega_s = \omega_{\X'}|_{X'_s}$.

By using the local description
$$
\omega_{\X'}=\sqrt{-1} \left( g_{\la\lb} dz^\la \wedge dz^{\lb} + g_{i\lb} ds^i\wedge dz^{\lb}+ g_{\la \overline{\jmath}} dz^{\la} \wedge ds^{\overline\jmath} + g_{i\overline \jmath} ds^i\wedge ds^{\overline\jmath} \right)
$$
with respect to the local coordinates $(z,s)$ we can say that the restrictions of $ \omega_{\X'}$ as well as restrictions of contractions depend in a $C^\infty$ way upon the parameter:

\begin{lemma}\label{le:contr}
  The restrictions
  \begin{eqnarray*}
    \omega_{\X'}\llcorner (\partial/\partial s^i)|X'_s &=& g_{i\overline\beta}dz^{\overline\beta}|X'_s\\
     \omega_{\X'}\llcorner (\partial/\partial s^{\overline \jmath})|X'_s &=& g_{\alpha\overline \jmath}dz^\alpha|X'_s\\
     \omega_\X'\llcorner (\partial/\partial s^i\wedge ds^{\overline\jmath})|X'_s&=& g_{i\overline \jmath}|X'_s
  \end{eqnarray*}
  are $\Cka^{k,\la}$-tensors that depend in a $C^\infty$ way upon the parameter $s\in S$. In particular, they are smooth and $L^2$ integrable tensors $(\A_{(2)}$-tensors for short). The analogous statement holds, if $\omega_{\X'}$ is replaced by $\widetilde \omega_{\X'}$, where the first two tensors do not change.
\end{lemma}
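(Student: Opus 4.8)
The plan is to work in the adapted coordinates of Lemma 3.7, where $\DD=\{z^1\cdots z^r=0\}$ is $s$-independent and $\|\sigma_i\|_i^2=|z^i|^2a_i(z,s)$ with $a_i>0$ smooth on $\X$, and to analyse the three summands of
$$
\omega_{\X'} = \mi\dl\dbar\log(h^{C^\infty}_{\LL^{-1}}) - \sum_{i=1}^l \mi\dl\dbar\log\bigl(\|\sigma_i\|_i^2\log^2\|\sigma_i\|_i^2\bigr) + \mi\dl\dbar u
$$
separately. For each summand I show that its contribution to the mixed components $g_{i\lb}$, $g_{\la\jbar}$, $g_{i\jbar}$ is a $\Cka^{k,\la}$-tensor on $X'_s$ depending in a $C^\infty$ way on $s$. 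Once this is done, global square-integrability, and hence the $\A_{(2)}$-property, follows immediately from the lemma that every $\Cka^{k,\la}$-tensor on $X'$ is globally square-integrable; note also that the third contraction $g_{i\jbar}$ is a function, so there the claim is just $\Cka^{k,\la}$-boundedness. Throughout, the passage from $d\zbar^\beta$ and $\dl/\dl\zbar^\beta$ to the quasi-coordinate frame is governed by \eqref{eq:quasi} and \eqref{eq:quasi_vf}.

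The first summand $\mi\dl\dbar\log h^{C^\infty}_{\LL^{-1}}$ is a smooth $(1,1)$-form on all of $\X$, so its mixed components are smooth functions on $\X$ and depend smoothly on $s$; restricting to $X'_s$ and applying \eqref{eq:quasi}, the coefficient of $d\bar v^\beta$ is multiplied by $\sim\zbar^\beta\log|z^\beta|^2\to0$, so these are bounded (even decaying) $\Cka^{k,\la}$-tensors by the bounded geometry of $\omega_{X'}$. For the Poincaré summand I set $L_i:=-\log\|\sigma_i\|_i^2=-\log|z^i|^2-\log a_i>0$, so that
$$
\log\bigl(\|\sigma_i\|_i^2\log^2\|\sigma_i\|_i^2\bigr) = -L_i + 2\log L_i .
$$
The decisive observation is that $\log|z^i|^2$ is independent of $s$ and is pluriharmonic on $\X'$, so in any mixed component at least one derivative must fall on the smooth factor $\log a_i$.

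Concretely, $\mi\dl\dbar L_i$ has smooth mixed components since $\dl_{s^j}\dbar_{\zbar^\beta}L_i=-\dl_{s^j}\dbar_{\zbar^\beta}\log a_i$, and for the remaining term one computes
$$
\dl_{s^j}\dbar_{\zbar^\beta}\log L_i = \frac{\dl_{s^j}\dbar_{\zbar^\beta}L_i}{L_i} - \frac{\dl_{s^j}L_i\cdot\dbar_{\zbar^\beta}L_i}{L_i^2},
$$
where $\dl_{s^j}L_i=-\dl_{s^j}\log a_i$ is bounded, while $\dbar_{\zbar^\beta}L_i$ is bounded for $\beta\ne i$ and equals $-1/\zbar^i-\dbar_{\zbar^i}\log a_i$ for $\beta=i$. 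Hence every mixed term carries a factor $1/L_i\to0$ except the single genuinely singular contribution $\sim (\zbar^i L_i^2)^{-1}\times(\text{smooth})$ in the $d\zbar^i$-direction; converting $d\zbar^i\sim -\zbar^iL_i\,d\bar v^i$ by \eqref{eq:quasi} turns this into $\sim -1/L_i\to0$. Thus in quasi-coordinates all mixed components are bounded, and the same bookkeeping applied to higher $\dl_v,\dbar_v$-derivatives (which only improve the powers of $1/L_i$) shows they are $\Cka^{k,\la}$-tensors; for $g_{i\jbar}$ both derivatives fall on smooth data and no $1/\zbar^i$ even appears. For the last summand $\mi\dl\dbar u$, the $(0,1)$-form $\dbar u|_{X'_s}$ is the exterior derivative of a $\Cka^{k,\la}$ function, hence a $\Cka^{k-1,\la}$-tensor, and Fréchet differentiability of $s\mapsto u|_{X'_s}$ gives that $\dl_{s^j}(\dbar u)|_{X'_s}$ is again $\Cka$ and $C^\infty$ in $s$; the base-base component $u_{s^i\bar s^j}$ is handled by iterating this. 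Collecting the three contributions proves the claim; the statement for $\widetilde\omega_{\X'}$ follows verbatim, the two fiber-base contractions being unchanged by hypothesis so that only $g_{i\jbar}$ must be re-examined, which is done exactly as above.

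The main obstacle is the Poincaré summand: one must verify that the apparent $1/\zbar^i$ and $1/L_i$ singularities in the mixed components are of exactly the order that the quasi-coordinate rescaling \eqref{eq:quasi} absorbs, so that none of the Poincaré blow-up present in the pure fiber component $g_{i\bar\imath}$ survives in the mixed directions. Making this counting of powers of $L_i$ uniform over the infinite family $\V$ and simultaneously controlling all Hölder derivatives is the technical crux.
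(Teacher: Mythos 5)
Your proposal is correct and takes essentially the same route as the paper's proof: decompose $\omega_{\X'}$ into the three summands $\mi\dl\dbar\log(h^{C^{\infty}}_{\LL^{-1}})$, the Poincar\'e term, and $\mi\dl\dbar u$, and handle the critical $u$-term exactly as the paper does, via fiberwise $\Cka^{k,\la}$-regularity for the vertical components and Fr\'echet differentiability of $s\mapsto u|_{X'_s}$ for the mixed ones. The only difference is one of detail: the paper dismisses the first two summands in a single phrase as ``not critical,'' whereas you carry out the quasi-coordinate bookkeeping (the $1/\zbar^i$ and $1/L_i$ cancellations against the rescaling \eqref{eq:quasi}) that justifies this assertion.
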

\begin{proof}
The only critical term appearing in the expression of $\omega_{\X'}$ is the one coming from $u$. The purely vertical components are dealt with the fact that $u|_{X'_s}$ lies in $\in \Cka^{k,\la}$.
But now by assumption the map $s \mapsto u(z,s) \in \Cka^{k,\la}$ is Fr\'echet differentiable, so that the (locally defined) partial derivatives in base direction of the function $u$ are again of class $\Cka^{k,\la}$. 
\end{proof}
{\em Horizontal lifts} of tangent vectors from $S$ to the total space $\X'$ are defined as perpendicular to the fibers of $\X' \to S$ with respect to $\omega_{\X'}$. Here we only need the property that the restrictions of $\omega_{\X'}$ to the fibers are positive definite.

We denote by $(g^{\overline\beta\alpha})$ the inverses of the metric tensors $g_{\alpha\overline\beta}$ of $\omega_s$ on the fibers $X'_s$. Covariant derivatives are always taken with respect to these metrics. We will use both the semicolon notation and the $\nabla$-notation. We continue using Greek indices for tensors in fiber direction, and $\partial_\alpha = \partial/\partial z^\alpha$.

The horizontal lift $v_i$ of a tangent vector $\dl_i=\pt/\pt s^i$ is a differentiable lift of $\dl_i$ to $\X'$ which is orthogonal to the fibers with respect to the sesquilinear form $\omega_{\X'}$:
$$
\<v_i, \dl_{\la}\>_{\omega_{\X'}} =0 \quad \mbox{for all } \la=1\ldots n.
$$
This is well defined, since the form $\omega_{\X'}$ is positive when restricted to the fibers. In terms of the coefficients of
$\omega_{\X}$, it is given by
$$
v_i=\dl_i + a_i^{\la}\dl_{\la},
$$
where
$$
a_i^{\la} = - g^{\lb\la}g_{i\lb}.
$$
\begin{lemma}\label{le:AiCkl}
  Let $A_i = \overline\partial v_i|X'_s$, i.e.\
  $$
  A_i = A^\alpha_{i\overline\beta}\partial_\alpha dz^{\overline\beta}.
 $$
  Then $A_i$ is of class $\Cka^{k,\la}$ and satisfies $\overline\partial A_i=0$. In particular it lies in $A^{0,1}_{(2)}(X'_s,T_{X'_s})$.
\end{lemma}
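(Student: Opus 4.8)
The plan is to split the statement into its three parts: the explicit form of $A_i$, its $\dbar$-closedness (both formal), and the $\Cka^{k,\la}$-regularity, from which membership in $A^{0,1}_{(2)}(X'_s,T_{X'_s})$ follows via the earlier square-integrability lemma.

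First I would make the coefficients explicit. In adapted coordinates the horizontal lift reads $v_i = \dl_i + a_i^{\la}\dl_{\la}$ with $a_i^{\la} = -g^{\ld\la}g_{i\ld}$, and both $\dl_i$ and $\dl_{\la}$ are holomorphic coordinate fields, so $\dbar\dl_i = \dbar\dl_{\la} = 0$. Hence $\dbar v_i = (\dbar a_i^{\la})\otimes\dl_{\la}$, and restricting to the fibre $X'_s$ (where $ds^{\jbar}=0$) yields $A_i = A_{i\lb}^{\la}\,dz^{\lb}\otimes\dl_{\la}$ with $A_{i\lb}^{\la} = \dl_{\lb}a_i^{\la}$. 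The $\dbar$-closedness is then immediate: since $\dl_{\la}$ is holomorphic, $\dbar A_i$ has coefficients $\dl_{\cbar}\dl_{\lb}a_i^{\la}$ antisymmetrised in the two antiholomorphic indices, and these vanish because mixed second partial derivatives commute. Equivalently, $\dbar A_i = \dbar^2 v_i|_{X'_s} = 0$.

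The substantial point is regularity. By Lemma \ref{le:contr} the fibre restriction $g_{i\ld}|_{X'_s}$ is a $\Cka^{k,\la}$-tensor, and the fibre metric $g_{\la\lb}$ is of class $\Cka^{k,\la}$ as well: it is uniformly quasi-isometric to the Poincar\'e model and has uniformly bounded quasi-coordinate derivatives by condition (iv) of Proposition \ref{quasicoor}, while the correction coming from $u$ is $\Cka^{k,\la}$ by assumption \eqref{asym}. Since matrix inversion is a smooth map on the cone of uniformly positive-definite matrices and preserves the H\"older class, the inverse $g^{\ld\la}$ is again $\Cka^{k,\la}$. Contractions and products of $\Cka^{k,\la}$-tensors remain $\Cka^{k,\la}$, so $a_i^{\la}\dl_{\la}$ is a $\Cka^{k,\la}$ vector field; applying $\dbar$, which sends $\Cka^{k,\la}$-tensors to $\Cka^{k-1,\la}$-tensors, shows that $A_i$ is of class $\Cka^{k,\la}$ after the tacit increase of $k_0$. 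This is precisely \cite[Lemma 3]{Sch98}. Finally, the lemma stating that every $\Cka^{k,\la}$-tensor on $X'$ is globally square-integrable gives $A_i \in A^{0,1}_{(2)}(X'_s,T_{X'_s})$.

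\emph{Main obstacle.} The delicate step is the regularity estimate read off in the quasi-coordinate frame, because both the coordinate field $\dl_{\la}$ and the coframe element $dz^{\lb}$ carry Poincar\'e weights: one has $z^{\la}\log|z^{\la}|^2\,\dl_{\la}\sim\dl_{v^{\la}}$ and $dz^{\la}/(z^{\la}\log|z^{\la}|^2)\sim dv^{\la}$ by \eqref{eq:quasi} and \eqref{eq:quasi_vf}, with the conjugate coframe transforming analogously. One must therefore verify that the metric contractions defining $a_i^{\la}$ carry exactly the compensating growth and decay needed for $A_{i\lb}^{\la}\,dz^{\lb}\otimes\dl_{\la}$ to have bounded components in the frame $dv^{\lb}\otimes\dl_{v^{\la}}$. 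That this holds hinges on the precise asymptotics of $\omega_{\X'}$ near $\DD$ --- in particular on the boundedness of $u$ and of the derivatives built into \eqref{asym} --- and is exactly what is encoded in Lemma \ref{le:contr} and in Schumacher's computation.
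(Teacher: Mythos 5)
Your proposal is correct and takes essentially the route the paper intends: the paper's proof is a one-line deferral to \cite[Lemma 3]{Sch98}, and what you have written is precisely that computation reconstructed from the paper's own ingredients --- the explicit formula $a_i^{\la}=-g^{\lb\la}g_{i\lb}$, the $\Cka^{k,\la}$-regularity of the contractions from Lemma \ref{le:contr} and of the fiber metric in quasi-coordinates, closedness from $\dbar^2=0$, and square-integrability from the lemma on $\Cka^{k,\la}$-tensors. No gaps; your "main obstacle" paragraph correctly identifies the weight-cancellation in the frames $dv^{\lb}\otimes\dl_{v^{\la}}$ as the substance of Schumacher's argument.
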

The proof is the same as in \cite[Lemma 3]{Sch98} \qed

The \ks\ map for a family  $\DD \overset{i}{\hookrightarrow} \X \overset{f}{\rightarrow} S$
$$
\rho_s:T_sS \to H^1_{(2)}(X'_s, T_{X'_s})
$$
was already defined in \cite{Sch98}. Analogous results like those of Section~\ref{se:l2int} hold for the sheaf of holomorphic vector fields:

{\em The sequence $(\A^{0,\bullet}_{(2)}(\T_{X'_s}),\ol\pt)$ is a fine resolution of the sheaf of $L^2$ holomorphic vector fields $\Oh_{(2)}(T_{X'_s})$, which is isomorphic to 
$T_{X_s}(-\log D_s):=(\Omega^1_{X_s}(\log D_s))^\vee$. }

In particular 
$$
H^1_{(2)}(X'_s, T_{X'_s}) \cong H^1(X_s,T_{X_s}(-\log D_s))
$$
and the above explicit construction yields a $L^2$-Dolbeault representative $A^\alpha_{i\ol\beta}\pt_\alpha dz^{\ol\beta}$ of $\rho_s(\pt/\pt s^i)$, where the \ks\ map is taken as map
$$
\rho_s:T_sS \to H^1_{(2)}(X'_s,T_{X'_s}).
$$
We will need also the following fact that follows from the definition.
\begin{lemma} \label{symm}
Let $A_i^{\la \sigma}=g^{\lb \sigma}A_{i\lb}^{\la}$. Then
$$ 
A_i^{\la \sigma}=A_i^{\sigma \la}.
$$
\end{lemma}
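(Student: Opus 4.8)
The plan is to reduce the asserted symmetry of the two raised holomorphic indices to a symmetry of \emph{lowered} antiholomorphic indices, and then to read the latter off from the K\"ahler property of $\omega_{\X'}$. Concretely, I set $A_{i\lb\ld}:=g_{\la\ld}\,A_{i\lb}^{\la}$. Since the index in the definition $A_i^{\la\ls}=g^{\lb\ls}A_{i\lb}^{\la}$ is raised with the (nondegenerate, fiberwise positive) metric, one has $A_{i\lb}^{\la}=g^{\ld\la}A_{i\lb\ld}$ and hence $A_i^{\la\ls}=g^{\lb\ls}g^{\ld\la}A_{i\lb\ld}$. After relabelling the dummy indices this shows that the desired identity $A_i^{\la\ls}=A_i^{\ls\la}$ is equivalent to the symmetry $A_{i\lb\ld}=A_{i\ld\lb}$ in the pair of antiholomorphic fiber indices. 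This is the statement I would actually establish.

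To obtain a usable expression for $A_{i\lb\ld}$ I would differentiate the defining relation of the horizontal lift. Recall from Section~\ref{se:famke} that $A_{i\lb}^{\la}=\dl_{\lb}a_i^{\la}$ along $X'_s$ and that the coefficients $a_i^{\la}=-g^{\ld\la}g_{i\ld}$ are characterised by $\<v_i,\dl_{\la}\>_{\omega_{\X'}}=0$, i.e.\ by
\begin{equation*}
g_{i\ld}+a_i^{\la}g_{\la\ld}=0 .
\end{equation*}
Applying the antiholomorphic fiber derivative $\dl_{\lb}$ and isolating the term $g_{\la\ld}\,\dl_{\lb}a_i^{\la}=A_{i\lb\ld}$ yields
\begin{equation*}
A_{i\lb\ld}=-\dl_{\lb}g_{i\ld}-a_i^{\la}\,\dl_{\lb}g_{\la\ld}.
\end{equation*}
The symmetry is now immediate from the fact that $\omega_{\X'}=-\mi\,\dl\dbar\log h$ is a closed real $(1,1)$-form on $\X'$: the condition $\dbar\omega_{\X'}=0$ forces the antiholomorphic symmetries $\dl_{\lb}g_{i\ld}=\dl_{\ld}g_{i\lb}$ and $\dl_{\lb}g_{\la\ld}=\dl_{\ld}g_{\la\lb}$ among the components of the full metric in the adapted coordinates $(z,s)$. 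Both terms on the right-hand side are therefore symmetric under $\lb\leftrightarrow\ld$, whence $A_{i\lb\ld}=A_{i\ld\lb}$ and the lemma follows.

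There is no genuine obstacle here; the points that require care are purely ones of bookkeeping. First, one must make sure the K\"ahler identities are available: although $\omega_{\X'}$ is assumed positive only along the fibers, it is globally $\dl\dbar$-exact and hence $d$-closed on the smooth locus $\X'$, which is all that the derivative symmetries require. Second, the symmetries used above mix the base index $i$ with fiber indices $\la,\lb,\ld$, so I would invoke the closedness of $\omega_{\X'}$ in the \emph{full} adapted coordinates rather than the fiberwise K\"ahler condition alone. Finally, everything takes place on $\X'$, where the metric is smooth, so the Poincar\'e-type singularities along $\DD$ play no role in this particular computation.
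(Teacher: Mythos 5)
Your proof is correct; the paper offers no argument beyond asserting that the lemma ``follows from the definition,'' and your computation --- reducing the symmetry of the raised indices to $A_{i\lb\ld}=A_{i\ld\lb}$, then obtaining that by differentiating the horizontality relation $g_{i\ld}+a_i^{\la}g_{\la\ld}=0$ and invoking the closedness of $\omega_{\X'}$ --- is exactly the standard argument (as in \cite{Sch98,Sch12}) that the paper has in mind.
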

Finally, the pointwise inner product with respect to $\omega_{\X'}$ of two horizontal lifts
$$
\varphi_{i\jbar} := \<v_i,v_j\>_{\omega_{\X'}},
$$
called the geodesic curvature, has an expression
$$
\varphi_{i\jbar} = g_{i\jbar} - g_{\la\jbar}g_{i\lb}g^{\la\lb}
$$
so that we conclude from Lemma \ref{le:contr} that this function lies in $\Cka^{k,\la}$.

\subsection{Bundles of $L^2$-integrable forms}
Now we consider the $L^2$ condition on the total space $\X$. Because the form $\omega_{\X'}$ is fiberwise positive, for any $s_0\in S$ after replacing $S$ by a neighborhood (contractible and Stein) there exists a positive hermitian form $\omega_S$ such that $\widetilde{\omega}'_\X = \omega_{\X'}+f^*\omega_S$ is a positive form on $\X'$.

The previous arguments imply that the statements of Proposition~\ref{pr:fibL2} and Proposition~\ref{pr:L2res} hold for the total spaces $(\X',\widetilde\omega_{\X'})$. The sheaves of locally $L^2$ sections $\Oh_{(2)}(\Omega^n_{\X'}(\LL|_{\X'}), \widetilde\omega_{\X'},h|_{\X'})$ on the whole total space $\X$ are equal to $\Omega^n_\X(\log \DD)(\LL)$, and an analogous fine resolution in terms of square-integrable forms exists. However, the sheaves $\Omega^n_\X(\log \D)$ of log $n$-forms on the total space $\X$ are not suitable for our methods.

Instead, we will need the coherent sheaf $\Omega^n(\log \D)_{\X/S}(\LL)$, and the sheaves
$$
\Oh_{(2)}(\Omega^n_{\X'/S}(\LL|_{\X'}), \wt\omega_{\X'},h|_{\X'})
$$
of relative $\LL$-valued holomorphic $n$-forms that are square-integrable with respect to the \ka\ form $\wt\omega_{\X'}= \omega_{\X'}+ f^*\omega_S$ and the hermitian metric $h|_{\X'}$, where integrability does not depend upon the choice of a hermitian form $\omega_S$.

Let
$$
\A^{0,q}_{(2)}(\Omega^n_{\X'/S}(\LL),\wt\omega_{\X'},h|_{\X'})
$$
denote the sheaf of {\em $(0,q)$-currents on the total space $\X'$ with values in the coherent sheaf\/ $\Omega^n_{\X'/S}(\LL)$ that are square-integrable along with their exterior $\ol\pt$-derivatives.}

\begin{proposition}
\begin{itemize}
  \item[(i)] 
  $$
  \Oh_{(2)}(\Omega^n_{\X'/S}(\LL|_{\X'}), \wt\omega_{\X'},h|_{\X'}) \simeq \Omega^n(\log \D)_{\X/S}(\LL).
  $$
\item[(ii)]
  The complex
  $$
  (\A^{0,\bullet}_{(2)}(\Omega^n_{\X'/S}(\LL),\wt\omega_{\X'},h|_{\X'}),\ol\pt)
  $$
  is a fine resolution of $\Oh_{(2)}(\Omega^n_{\X'/S}(\LL), \wt\omega_{\X'},h|_{\X'})$.
\end{itemize}
\end{proposition}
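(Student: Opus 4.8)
The plan is to deduce both statements from the corresponding assertions for \emph{absolute} top-degree forms on the total space, which hold by the remark preceding the proposition: after shrinking $S$ to a contractible Stein neighbourhood, Propositions~\ref{pr:fibL2} and~\ref{pr:L2res} applied to the complete \ka\ manifold $(\X',\wt\omega_{\X'})$ of bounded geometry yield
$$
\Oh_{(2)}(\Omega^{n+m}_{\X'}(\LL|_{\X'}),\wt\omega_{\X'},h|_{\X'}) \simeq \Omega^{n+m}_\X(\log\DD)(\LL)
$$
together with a fine resolution of this sheaf by $(\A^{0,\bullet}_{(2)}(\Omega^{n+m}_{\X'}(\LL),\wt\omega_{\X'},h|_{\X'}),\ol\pt)$. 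Since $K_S$ is trivial on the contractible base, with holomorphic frame $ds:=ds^1\we\cdots\we ds^m$, and $K_\X=K_{\X/S}\otimes f^{*}K_S$, we have $\Omega^{n+m}_\X(\log\DD)(\LL)\cong\Omega^n(\log\DD)_{\X/S}(\LL)$. First I would introduce the $\Oh$-linear morphism $\eta\mapsto\eta\we f^{*}ds$ from $\Omega^n_{\X'/S}(\LL)$ to $\Omega^{n+m}_{\X'}(\LL)$; it is well defined on the quotient $\Omega^n_{\X'/S}=\Omega^n_{\X'}/(f^{*}\Omega^1_S\we\Omega^{n-1}_{\X'})$ because $f^{*}\Omega^1_S\we f^{*}ds=0$, it is an isomorphism, and as $f^{*}ds$ is holomorphic it commutes with $\ol\pt$ and extends to an isomorphism of the associated sheaves of $(0,q)$-currents.

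The decisive step is to check that $\we f^{*}ds$ matches the two $L^2$-conditions. The induced metric on $\Omega^n_{\X'/S}$ only sees the vertical block $g_{\alpha\lb}$ of $\wt\omega_{\X'}$, i.e.\ the fibrewise form $\omega_s$; writing the full metric of $\wt\omega_{\X'}$ in adapted coordinates in block form with vertical part $g_{\alpha\lb}$ and base block $g_{i\jbar}+\omega_{S,i\jbar}$, the Schur complement of the vertical block equals the geodesic curvature plus the base form, so that its determinant factorises as $\det(g_{\alpha\lb})\cdot\det(\varphi_{i\jbar}+\omega_{S,i\jbar})$. This gives the pointwise identity
$$
|\eta|^2_{\omega_s,h} = \det\!\big(\varphi_{i\jbar}+\omega_{S,i\jbar}\big)\cdot\big|\eta\we f^{*}ds\big|^2_{\wt\omega_{\X'},h},
$$
and the same scalar comparison factor appears degreewise for $(0,q)$-currents, since the antiholomorphic part is measured by $\wt\omega_{\X'}$ on both sides. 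By Lemma~\ref{le:contr} the function $\varphi_{i\jbar}$ is a bounded $\Cka^{k,\la}$ tensor and $\omega_S$ is smooth and positive on the relatively compact base, so $\det(\varphi_{i\jbar}+\omega_{S,i\jbar})$ is bounded above and bounded away from zero. Hence the relative and absolute norms are uniformly comparable, the square-integrability of $\eta$ as a relative form is equivalent to that of $\eta\we f^{*}ds$, and in particular it does not depend on the choice of $\omega_S$. (Equivalently, one can see (i) directly from the fibrewise Laurent-series argument and the Poincar\'e estimates of Proposition~\ref{pr:fibL2}, but the reduction above is cleaner.)

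Granting the comparison, the conclusions follow formally. For (i), a holomorphic relative $n$-form is $L^2$ if and only if its image $\eta\we f^{*}ds$ is, i.e.\ if and only if it lies in $\Omega^{n+m}_\X(\log\DD)(\LL)$; untwisting by the frame $ds$ identifies this with $\Omega^n(\log\DD)_{\X/S}(\LL)$. For (ii), since $\we f^{*}ds$ is a $C^\infty$-linear isomorphism of the $L^2$-current sheaves commuting with $\ol\pt$ and satisfying $\ol\pt(\eta\we f^{*}ds)=(\ol\pt\eta)\we f^{*}ds$, it is an isomorphism of complexes; exactness and, crucially, fineness are therefore inherited from the absolute resolution on $(\X',\wt\omega_{\X'})$. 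I expect the main obstacle to be ensuring that the granted absolute statements genuinely apply — that is, that $\wt\omega_{\X'}$ is a complete metric of bounded geometry on the total space $\X'$ — the fineness there resting, as in the proof of Proposition~\ref{pr:L2res}, on the existence of cut-off functions with bounded differential in the Poincar\'e geometry.
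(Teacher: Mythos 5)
Your argument is essentially correct, but it is organized quite differently from the paper's proof, and the comparison is worth recording. The paper proves (i) directly: it expands $\wt\omega_{\X'}^{\,n+1} = \sqrt{-1}\,\omega^n_{\X'/S}\,\varphi\, ds\we d\sbar + \omega^n_{\X'/S}\we f^*\omega_S$, observes that $\varphi$ is locally uniformly bounded, and then uses Fubini's theorem together with the quasi-isometry of nearby fibers to reduce to the \emph{fiberwise} statement of Proposition~\ref{pr:fibL2}; for (ii) it invokes Fujiki's $L^2$-Dolbeault lemma after locally splitting the bundle, and obtains fineness by extending the fiberwise cut-off functions of Proposition~\ref{pr:L2res} using the differentiable product structure. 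You instead reduce to the \emph{absolute} top-degree statement on the total space (the remark preceding the proposition) via $\eta\mapsto\eta\we f^*ds$ and $K_\X=K_{\X/S}\otimes f^*K_S$, matching the two $L^2$-conditions by the Schur-complement factorization $\det(\wt g)=\det(g_{\alpha\lb})\det(\varphi_{i\jbar}+\omega_{S,i\jbar})$. Note that your determinant identity is exactly the paper's $\wt\omega^{n+1}$ formula in the case $m=1$, so the analytic core coincides; what differs is the reduction. Your route makes (ii) completely formal (exactness and fineness transfer through an isomorphism of complexes), but it leans on the unproved assertion that Propositions~\ref{pr:fibL2} and~\ref{pr:L2res} hold for $(\X',\wt\omega_{\X'})$, whereas the paper's proof rests only on the fiberwise results — you correctly identify this as the point requiring the bounded-geometry/cut-off discussion.

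One step does need patching. You claim that $\det(\varphi_{i\jbar}+\omega_{S,i\jbar})$ is \emph{bounded away from zero} because $\varphi_{i\jbar}$ is a bounded $\Cka^{k,\la}$-tensor and $\omega_S$ is positive on a relatively compact base. That does not follow: positivity of $\wt\omega_{\X'}$ only gives \emph{pointwise} positive definiteness of the Schur complement, and boundedness of $\varphi_{i\jbar}$ gives an upper bound, but the smallest eigenvalue of $\varphi_{i\jbar}+\omega_{S,i\jbar}$ could still degenerate to $0$ along $\DD$ (the geodesic curvature is in general not semipositive). The lower bound is needed precisely in the direction ``relative $L^2$ $\Rightarrow$ absolute $L^2$ $\Rightarrow$ log poles'', so it cannot be dropped. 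The fix is easy and is in fact the same normalization the paper's ``$\sim$'' tacitly uses: since $\varphi_{i\jbar}$ is uniformly bounded over $f^{-1}(V)$ for $V\Subset S$ by Lemma~\ref{le:contr}, one may \emph{choose} $\omega_S$ so large that $\varphi_{i\jbar}+\omega_{S,i\jbar}\geq \delta\cdot\operatorname{Id}$ for some $\delta>0$; with this choice your two-sided comparison, and hence both (i) and (ii), go through, and the independence of the notion of integrability from the (sufficiently large) choice of $\omega_S$ follows from the same comparison.
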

\begin{proof}
To simplify notation, we assume that $\dim S=1$. We note that
$$
\widetilde\omega_{\X'}^{n+1} = \sqrt{-1}\omega_{\X'/S}^n\;\varphi\; ds\wedge d\sbar + \omega_{\X'/S}^n \wedge f^*\omega_S
$$
and the function $\varphi$ is (locally w.r.t. $S$ uniformly) bounded. For an open set $U \subset \X$, we have thus for the $L^2$-norm of a section $u \in \Omega^n_{\X'/S}(\LL)$ that
$$
\int_U{|u|^2_{\omega_{\X'/S}, h_{\LL}}\;\widetilde\omega_{\X'}^{n+1}} \sim \int_{f(U)}\left(\int_{X_s\cap U}{(u\wedge \ovl{u})_h}\right)\omega_S
$$
Now the first statement follows from Fubini's Theorem, the fact that nearby fibers and bundles are quasi-isometric and Proposition ~\ref{pr:fibL2}. For the second statement, we decompose the vector bundle 
$\Omega^n_{\X'/S}(\LL|_{\X'})$ locally as a sum of line bundles and apply \cite[Prop. 2.1]{Fu92}, cf. also \cite[p.870]{Fu92}. This shows that we have a resolution. 
By Proposition~\ref{pr:L2res} fiberwise we find cut-off functions with bounded differential. Because in the differentiable sense we have a product situation, this holds on the total space, too. Hence, the sheaves of locally $L^2$ integrable smooth sections are again fine.
\end{proof}
\begin{corollary}\label{cor:repr}
The (local) holomorphic sections of $f_*(\Omega^n(\log \D)_{\X/S}(\LL))$ are given by holomorphic sections of $\Omega^n(\log \D)_{\X/S}(\LL)$ on the total space which are precisely the holomorphic sections of 
$\Omega^n_{\X'/S}(\LL|_{\X'})$ that are $L^2$-integrable along the fibers. In particular, their restrictions to the open fibers yield holomorphic and $L^2$-integrable $(n,0)$-forms with values in $L$.
\end{corollary}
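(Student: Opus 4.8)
The plan is to treat this as a direct unpacking of the foregoing Proposition together with the definition of a direct image, so no new analytic input should be needed. First I would recall that, by the very definition of the sheaf-theoretic pushforward, a local holomorphic section of $f_*(\Omega^n(\log\D)_{\X/S}(\LL))$ over an open set $V\subset S$ is exactly a holomorphic section of $\Omega^n(\log\D)_{\X/S}(\LL)$ over $f^{-1}(V)$. This yields the first of the two claimed identifications immediately.

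For the second identification I would invoke part (i) of the preceding Proposition, the isomorphism $\Oh_{(2)}(\Omega^n_{\X'/S}(\LL|_{\X'}),\wt\omega_{\X'},h|_{\X'})\simeq\Omega^n(\log\D)_{\X/S}(\LL)$. This identifies a holomorphic section of $\Omega^n(\log\D)_{\X/S}(\LL)$ on the total space with a holomorphic relative $n$-form on $\X'$ taking values in $\LL|_{\X'}$ that is square-integrable with respect to $\wt\omega_{\X'}$ and $h|_{\X'}$. The only point left to reconcile is that this $L^2$-condition on the total space coincides with $L^2$-integrability along the fibers. Here I would reuse the Fubini computation from the proof of that Proposition: the expansion of $\wt\omega_{\X'}^{n+1}$ combined with the (locally uniform) boundedness of the geodesic-curvature factor $\varphi$ shows that integrating $|u|^2$ against $\wt\omega_{\X'}^{n+1}$ over $f^{-1}(V)$ is comparable to integrating the fiber integrals $\int_{X_s}(u\wedge\ovl u)_h$ against $\omega_S$. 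Hence total-space integrability is equivalent to fiberwise integrability and, in particular, is independent of the auxiliary choice of $\omega_S$.

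Finally, for the \emph{in particular} clause I would restrict such a section to a fixed fiber $X'_s$ and apply the fiberwise Proposition~\ref{pr:fibL2}, which identifies the sheaf of holomorphic $L$-valued $n$-forms on $X'_s$ that are locally $L^2$ with respect to $h|_{X'_s}$ with $\Oh(\Omega^n_{X_s}(\log D_s)(L_s))$; the restriction is therefore automatically a holomorphic and $L^2$-integrable $(n,0)$-form with values in $L_s$. I do not anticipate a genuine obstacle, as the statement is essentially a repackaging of the preceding Proposition. The one step I would present with some care—and the closest thing to a subtlety—is the equivalence between the total-space $L^2$-norm and the fiberwise $L^2$-norms, which rests solely on the boundedness of $\varphi$ and on Fubini's theorem, both already available.
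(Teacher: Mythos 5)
Your proposal is correct and follows essentially the same route as the paper, which states this corollary without a separate proof precisely because it is a direct unpacking of part (i) of the preceding Proposition via the definition of the direct image, the Fubini comparison $\int_U |u|^2_{\omega_{\X'/S},h}\,\wt\omega_{\X'}^{n+1} \sim \int_{f(U)}\bigl(\int_{X_s\cap U}(u\wedge\ovl u)_h\bigr)\omega_S$, and the fiberwise Proposition~\ref{pr:fibL2}. The only refinement worth making explicit is that Fubini alone yields finiteness of the fiber integrals for almost every $s$, and the upgrade to \emph{every} fiber (and the converse direction) uses the fact that nearby fibers and bundles are quasi-isometric --- an ingredient the paper's Proposition proof invokes by name and which your phrase ``reuse the Fubini computation'' implicitly carries along.
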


\subsection{Fiber integrals and Lie derivatives}
Given our family $f: \X' \to S$ of open complex manifolds $X'_s$ of dimension $n$ and a relative $(n,n)$-form $\eta$ on $\X'$ that is smooth there and integrable along the fibers,
the fiber integral
$$
\int_{\X'/S}{\eta}
$$
gives a function on the base $S$ (see \cite[Sect. 2.1]{Sch12} and \cite[Ch. VII]{GHV72} for the general definition of fiber integrals). In our case, the components $H^{\lbar k}$ of the metric tensor on the direct image are defined by such fiber integrals, where $\eta$ is given by the inner product/wedge product of the sections $\psi^k$:
$$
\eta \,(s) = \psi^k|_{X'_s} \cdot \psi^{\lbar}|_{X'_s} \;dV = \mi^{n^2} \; (\psi^k|_{X'_s} \wedge \psi^{\lbar}|_{X'_s})_h 
$$
We want to show that these fiber integrals give  smooth functions on the base so that we get indeed a smooth hermitian metric on the direct image we consider.
 Thus, if $s^1,\ldots,s^m$ are local holomorphic coordinates on the base, we need to compute the derivatives
$$
\frac{\dl}{\dl s^k}\int_{X_s}\eta \quad  \mbox{for }1\leq i \leq r \quad\mbox{ and } \quad \frac{\dl}{\dl s^{\lbar}}\int_{X_s}\eta, \quad \mbox{for }
1\leq l \leq r.
$$ 
This can be done by using Lie derivatives:
\begin{lemma}
\label{lemma 1}
For $1\leq k \leq m$, let $v_k$ be the horizontal lift of $\dl/\dl s^k$. We write $\dl/\dl s^{\lbar}$ for 
$\dl/\dl \ovl{s^l}$ and $v_{\lbar}$ for $\ovl{v_l}$. Then
$$
\frac{\dl}{\dl s^k}\int_{X'_s}\eta = \int_{X'_s}L_{v_k}(\eta) \quad \mbox{and} \quad \frac{\dl}{\dl s^{\lbar}}\int_{X'_s}\eta = \int_{X'_s}L_{v_{\lbar}}(\eta),
$$
where $L_{v_k}$ and $L_{v_{\lbar}}$ denotes the Lie derivative in the direction of $v_k$ and $v_{\lbar}$ respectively.
\end{lemma}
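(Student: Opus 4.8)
The plan is to reduce the statement to ordinary differentiation under the integral sign on a \emph{fixed} open manifold, to identify the resulting derivative with the fiber integral of a Lie derivative along the coordinate lift, and finally to exchange the coordinate lift for the horizontal lift $v_k$ at the cost of a boundary term that I control by Gaffney's theorem. Concretely, I would first invoke the differentiable trivialization $\Phi\colon \X'\xrightarrow{\sim} X'\times S$ of Section \ref{famlogpairs} to write $\int_{X'_s}\eta = \int_{X'}\eta_s$, where $X'=X'_{s_0}$ is the reference fiber and $\eta_s$ is the $s$-dependent family of relative $(n,n)$-forms obtained from $\eta = \psi^k|_{X'_s}\cdot\psi^{\lbar}|_{X'_s}\,dV$. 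By Corollary \ref{cor:repr} the sections $\psi^k,\psi^l$ restrict to holomorphic $L^2$ forms with the prescribed Poincar\'e-type behaviour and depend holomorphically on $s$, while by Lemma \ref{le:contr} the metric coefficients, and hence $dV$, are $\Cka^{k,\la}$-tensors depending in a $C^\infty$ way on $s$. Thus the pulled-back integrand $\eta_s$ and all of its $s$-derivatives are uniformly bounded $\Cka^{k,\la}$-tensors; combined with the finiteness of the volume of $X'$ (the Lemma of Section \ref{sectionHoelderspaces}) this furnishes an integrable dominating function, and dominated convergence justifies differentiating under the integral sign.

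Next I identify $\pt/\pt s^k$ acting on the integrand with a Lie derivative. In the product trivialization the flow of the coordinate lift $\tilde v_k$ (the field corresponding to $\pt/\pt s^k$ under $\Phi$) is translation in the $S$-factor, and since in adapted coordinates $L_{\tilde v_k}(dz^{\la})=0$, one checks that $L_{\tilde v_k}\eta$ restricts on each fiber to $\pt \eta_s/\pt s^k$. Therefore $\frac{\pt}{\pt s^k}\int_{X'_s}\eta = \int_{X'_s}L_{\tilde v_k}\eta$. To pass to the horizontal lift I write $v_k = \tilde v_k + w$ with $w = a_k^{\la}\pt_{\la}$ vertical, so that $L_{v_k}\eta - L_{\tilde v_k}\eta = L_w\eta$. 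Because $\eta$ restricts to a top-degree $(n,n)$-form on the $n$-dimensional fiber we have $d(\eta|_{X'_s})=0$, and Cartan's formula collapses to $L_w\eta|_{X'_s} = d\!\left(\iota_w(\eta|_{X'_s})\right)$. It thus remains to prove $\int_{X'_s} d(\iota_w\eta)=0$.

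This vanishing is the crux of the argument, and it is exactly where completeness of the Poincar\'e metric and the asymptotic hypotheses on $h$ are used. I would apply Gaffney's version of Stokes' theorem (Proposition \ref{GaffneyStokes}) to the $(2n-1)$-form $\gamma := \iota_w\eta$ on the complete K\"ahler manifold $(X'_s,\omega_s)$. The coefficients $a_k^{\la} = -g^{\lbar\la}g_{k\lbar}$ of $w$ are bounded $\Cka^{k,\la}$-tensors by Lemma \ref{le:contr}, and $\eta$ is square-integrable, so $\gamma$ and $d\gamma = L_w\eta$ are again $\Cka^{k,\la}$-tensors; being uniformly bounded on a finite-volume manifold they lie in $L^1$. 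Gaffney's theorem then gives $\int_{X'_s} d\gamma = 0$, which completes the proof, and the conjugate identity follows either by complex conjugation or by repeating the argument verbatim with $v_{\lbar}$ in place of $v_k$.

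The main obstacle I anticipate is precisely the verification of the $L^1$-hypotheses of Gaffney's theorem near the divisor $\DD$: one must rule out that the contraction $\iota_w\eta$ or its differential acquire non-integrable growth from the Poincar\'e singularities of $h$ and $\omega_{\X'}$. This is guaranteed by expressing all tensors in the quasi-coordinates of Proposition \ref{quasicoor} and using their $\Cka^{k,\la}$-regularity together with the transition rule \eqref{eq:quasi_vf}, but it is exactly this bookkeeping of uniform bounds that carries the real weight of the argument; the algebraic identities (Cartan's formula and the splitting $v_k=\tilde v_k+w$) are formally the same as in the compact setting treated in \cite{Sch12}.
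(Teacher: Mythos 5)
Your proof is correct in outline but takes a genuinely different route from the paper's. The paper argues directly with the horizontal lift: it cites the compact-fiber statement \cite[Lemma 1]{Na17}, observes that the only new issue on the open fibers is integrability of $L_{v_k}(\eta)$ and $L_{v_{\lbar}}(\eta)$ (after which dominated convergence justifies differentiating under the integral sign), and verifies that integrability by explicit asymptotics near $\DD$, namely $\eta_{1\ldots n\,\ovl{1}\ldots\ovl{n}} = O\left(\prod_{i=1}^k{\log^2(|z^i|^2)}\right)$ and $a_k^{\la}=O(|z^{\la}|\log|z^{\la}|)$. You instead decompose $v_k$ into the trivialization lift plus a vertical correction $w$, differentiate under the integral for the trivialization lift, and remove the correction via Cartan's formula and Gaffney's theorem; this mirrors the compact-case Stokes argument, cleanly separates the two analytic issues (differentiation under the integral sign versus independence of the choice of lift), and is a valid alternative --- at the cost of an extra application of Gaffney, whereas the paper concentrates all the work in the divisor asymptotics. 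Two caveats in your write-up deserve attention. First, $w=v_k-\tilde v_k$ is not literally $a_k^{\la}\pt_{\la}$: the trivialization lift need not agree with the coordinate lift $\pt/\pt s^k$ of adapted holomorphic coordinates, so $w$ also contains a smooth vertical field tangent to $\DD$; this piece is bounded in the Poincar\'e metric and harmless, but it must be included in your $L^1$-estimates. Second, the $L^1$-hypotheses of Gaffney's theorem for $\iota_w\eta$ and $d(\iota_w\eta)=L_w\eta$ do not follow formally from Lemma \ref{le:contr} together with finite volume, since $\eta=\psi^k\cdot\psi^{\lbar}\,dV$ is not among the tensors treated there; they require precisely the two displayed $O$-bounds above, which come from the log-pole structure of the $\psi$'s and the asymptotics of $h$. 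So the bookkeeping you defer is not routine: it is exactly the content of the paper's proof, as you yourself anticipate.
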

\begin{proof}
The statement is well-known when the fibers are compact \cite[Lemma 1]{Na17}.
We only have to show that $L_{v_k}(\eta)$ and $L_{v_{\lbar}}(\eta)$ are square integrable. Then the statement follows from the dominated convergence theorem. We only present it for the first.
We are using local holomorphic coordinates $z^1,\ldots,z^n$ near a point $p \in D_s$ such that $D_s=\{z^1\cdots z^k=0\}$.
Because $\psi^k,\psi^l \in H^0(\X,\Omega^n(\log \D)_{\X/S}(\LL))$ and the assumption on the metric $h$ we have that
$$
 \eta_{1\ldots n \ovl{1} \ldots \ovl{n}} = O\left(\prod_{i=1}^k{\log^2(|z^i|^2)}\right).
$$
where
$$
\eta = \eta_{1\ldots n \ovl{1} \ldots \ovl{n}} \; dz^1 \wedge \ldots \wedge dz^n \wedge dz^{\ovl{1}} \wedge \dots \wedge dz^{\ovl{n}}.
$$ 
Now $v_k = \dl/\dl_k + a_k^{\la}\,\dl/\dl z^{\la}$ so that
$$
L_{v_k}\eta  = \left(\eta_{1\ldots n, \ovl{1}, \dots \ovl{n};k} + \sum_{\la=1}^na_k^{\la}\eta_{1\ldots n \ovl{1} \ldots \ovl{n};\la} + \sum_{\la=1}^n{a_{k;\la}^{\la}
\eta_{1\ldots n \ovl{1}\dots \ovl{n}}}
\right)
\;dz^1\wedge \ldots \wedge dz^n \wedge dz^{\ovl{1}}\wedge \ldots \wedge dz^{\ovl{n}}.
$$
Here $;$ denotes covariant derivative. Now because of
$$
a_k^{\la} = O(|z^\la| \log |z^{\la}|) \quad \mbox{ for } \quad 1\leq \la \leq k
$$
we see that $L_{v_k}(\eta)$ is indeed integrable.
\end{proof}
We see that we can iterate this process so that the fiber integral gives a smooth function on $S$. But this means that the $L^2$-metric is indeed a smooth metric on $f_*{(\Omega^n(\log \D)_{\X/S}(\LL)})$.
We note here that the square integrability of $L_v(\eta)$ will also follow from Lemma \ref{integrability}.

Before we go to the computation of the curvature which is the most technical part of the article, let us consider the two applications.
\section{Families of log-canonically polarized manifolds: (Semi-)Positivity of the relative canonical bundle}
Let $\DD \overset{i}{\hookrightarrow} \X \overset{f}{\rightarrow} S$ be a holomorphic \emph{family of log-canonically polarized pairs}, i.e. holomorphic family of smooth log pairs $(X_s,D_s)$ with ample adjoint bundle $K_{X_s} + D_s$. The family is called {\em effectively parameterized} if the \ks\ map
$$
\rho_s: T_sS \to H^1_{(2)}(X'_s,T_{X'_s})
$$
is injective at all points $s\in S$.

Let $\{\omega_s\}_{s \in S}$ be the family of complete K\"ahler-Einstein metrics on $X_s'=X_s \setminus D_s$ with constant negative curvature $-1$ given by \cite{Kob84,TY86}.
By \cite[Eq.(1)]{Sch98} they fulfill the Monge-Amp\`ere equation
\begin{equation}
\label{MA}
\omega_s^n = \exp(u_s) \cdot (\psi |X_s),
\end{equation}
where $\psi=\Omega/(\prod_i{||\sigma_i||^2 \log^2||\sigma_i||^2})$ for a smooth relative volume form $\Omega$. Here $u_s$ is a family of functions in $\Cka^{k,\la}(X\setminus D)$. 
By the implicit function theorem applied to the H\"older spaces $\Cka^{k,\la}(X\setminus D)$ of functions, these functions $u_s$ depend smoothly on the parameter $s\in S$ in the sense that the map 
$s \mapsto \Cka^{k,\la}$ is indeed Fr\'echet differentiable (cf. \cite[Sect. 2.6]{Sch98}). 
Hence we can consider the relative volume form $\omega^n_{\X'/S}$ on $\X'=\X\setminus \DD$ associated to the family $\{\omega_s\}_{s \in S}$ as a singular hermitian metric on $K_{\X/S}$ whose inverse has Poincar\'e type singularities along $\D$. Its smooth curvature form on $\X'$ is given by
$$
\omega_{\X'}:= -\Ric(\omega^n_{\X'/S}).
$$
Analogous to the canonically polarized case proved in \cite{Sch12}, we have the following result: 
\begin{theorem}
\label{posvar}
The form $\omega_{\X'} \geq 0$ is semi-positive, and strictly positive if the family $\DD \overset{i}{\hookrightarrow} \X \overset{f}{\rightarrow} S$ is effectively parameterized.
\end{theorem}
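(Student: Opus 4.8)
The plan is to follow the strategy of the compact canonically polarized case \cite{Sch12}, transplanted to the complete Poincar\'e-type fibers $X'_s$: reduce the positivity of the total-space form $\omega_{\X'}$ to a pointwise positivity statement for the geodesic curvature $\varphi_{i\jbar}$, and then read that statement off from an elliptic equation on the fibers. First I would exploit the defining property $\<v_i,\dl_\la\>_{\omega_{\X'}}=0$ of the horizontal lift: it makes the splitting of $T\X'$ into its vertical (fiber) and horizontal parts orthogonal for $\omega_{\X'}$, so that the Hermitian form $\omega_{\X'}$ decomposes as the orthogonal direct sum of the fiber form $\omega_s=\omega_{\X'}|_{X'_s}$ and the horizontal form $(\varphi_{i\jbar})=(\<v_i,v_j\>_{\omega_{\X'}})$. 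Since $\omega_s$ is the K\"ahler-Einstein metric and hence positive definite, $\omega_{\X'}\geq 0$ is equivalent to positive semidefiniteness of $(\varphi_{i\jbar})$ at every point of $\X'$, and $\omega_{\X'}>0$ is equivalent to $(\varphi_{i\jbar})>0$. This reduces both assertions to the geodesic curvature.

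Next I would establish the fundamental fiberwise identity
$$
(\Box+1)\,\varphi_{i\jbar}=A_i\cdot A_{\jbar},
$$
where $\Box$ denotes the complex Laplacian on functions on $X'_s$ and $A_i\cdot A_{\jbar}$ is the pointwise inner product of the \ks\ representatives. This is precisely the computation of \cite{Sch12}: differentiating the Monge-Amp\`ere equation \eqref{MA} in base directions and invoking the normalisation $\Ric(\omega_s)=-\omega_s$ yields this identity, the constant $1$ encoding the curvature value $-1$. The delicate point in our setting is that every quantity entering the computation -- the metric coefficients, their base derivatives, $A_i$ and $\varphi_{i\jbar}$ -- is a $\Cka^{k,\la}$-tensor by Lemma \ref{le:contr}, Lemma \ref{le:AiCkl} and the Fr\'echet differentiability of $s\mapsto u_s$; I would check that each covariant derivative, product and contraction therefore stays within the H\"older calculus of Section \ref{sectionHoelderspaces} and that every integration by parts is legitimate by Gaffney's theorem (Proposition \ref{GaffneyStokes}), so that no boundary contribution appears along $D_s$.

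To extract semipositivity, fix $s$ and a vector $\xi=\xi^i\dl_i\in T_sS$, and set $\varphi_\xi:=\varphi_{i\jbar}\xi^i\ovl{\xi^j}$ and $A_\xi:=\xi^iA_i$, so that $(\Box+1)\varphi_\xi=|A_\xi|^2\geq 0$. On the noncompact fiber one cannot simply evaluate at a minimum; instead I would use that $(X'_s,\omega_s)$ is complete, of finite volume, with $\Ric=-1$ bounded below. Then $|A_\xi|^2$ is bounded, hence in $L^2$, and $\varphi_\xi$ is bounded (being $\Cka^{k,\la}$) with $\Box\varphi_\xi\in L^2$, so $\varphi_\xi$ lies in the domain of the Gaffney extension of $\Box$; as $\Box+1$ is injective there and its resolvent is positivity preserving, $\varphi_\xi=(\Box+1)^{-1}|A_\xi|^2\geq 0$ (equivalently, the Omori-Yau maximum principle applies because $\Ric$ is bounded below). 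Hence $(\varphi_{i\jbar})\geq 0$ and $\omega_{\X'}\geq 0$. For an effectively parameterized family, injectivity of $\rho_s$ gives $A_\xi\not\equiv 0$, and thus $|A_\xi|^2\not\equiv 0$, for every $\xi\neq 0$; were $\varphi_\xi$ to vanish at a point, the strong maximum principle applied to the nonnegative solution $\varphi_\xi$ of $(\Box+1)\varphi_\xi=|A_\xi|^2\geq 0$ would force $\varphi_\xi\equiv 0$, whence $|A_\xi|^2=(\Box+1)\varphi_\xi\equiv 0$, a contradiction. Therefore $\varphi_\xi>0$ everywhere and $\omega_{\X'}>0$.

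I expect the \textbf{main obstacle} to be the passage from the local elliptic identity to the global conclusion on the open fiber. On the one hand, one must justify the fundamental equation up to and across the divisor by controlling the logarithmic asymptotics, so that the integrations by parts produce no boundary terms along $D_s$. On the other hand, the sign of $\varphi_{i\jbar}$ rests in \cite{Sch12} on the compact maximum principle, which must here be replaced by the completeness-and-Ricci-lower-bound argument -- positivity of the resolvent $(\Box+1)^{-1}$, or the Omori-Yau maximum principle -- valid on the complete Poincar\'e-type fiber $X'_s$.
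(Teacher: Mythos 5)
Your proposal is correct, and its skeleton coincides with the paper's: the orthogonal vertical/horizontal splitting reducing everything to the geodesic-curvature matrix $(\varphi_{i\jbar})$, the elliptic identity $(1+\Box_s)\varphi_{i\jbar}=A_i\cdot A_{\jbar}$ coming from differentiating the Monge--Amp\`ere equation (the paper cites the computation of \cite{Sch98} for this), and the use of the $\Cka^{k,\la}$-calculus to make the analysis legitimate on the complete fibers. Where you genuinely differ is in how positivity is extracted from the identity. For semipositivity the paper applies Yau's maximum principle \cite[Theorem 1]{Yau78} fiberwise---this is precisely your parenthetical Omori--Yau alternative---while your primary route, $\varphi_\xi=(\Box+1)^{-1}|A_\xi|^2\ge 0$ via positivity-preservation of the resolvent of the Gaffney extension, is also valid but rests on two facts you should make explicit: essential self-adjointness of the scalar Laplacian on a complete manifold (so that the bounded distributional solution $\varphi_\xi$ actually lies in the domain of the self-adjoint extension and equals the resolvent applied to $|A_\xi|^2$) and positivity of the associated semigroup (Beurling--Deny criterion, or heat-kernel positivity). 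For strict positivity the paper invokes Sturm's lower heat-kernel estimate \cite[Cor. 4.3]{St92} to see that the heat kernel, hence the resolvent kernel, is strictly positive, and then argues as in \cite[Prop. 1]{Sch12}; your Hopf strong-minimum-principle argument (from $\Delta\varphi_\xi-\varphi_\xi=-|A_\xi|^2\le 0$, a zero of the nonnegative function $\varphi_\xi$ forces $\varphi_\xi\equiv0$, hence $A_\xi\equiv0$, contradicting effectivity) is a genuinely different, more elementary and purely local mechanism that needs no heat-kernel input once nonnegativity is established. What the paper's route buys in exchange is quantitative: a pointwise lower bound for $\varphi_\xi$ by the resolvent kernel acting on $\|A_\xi\|^2$, the non-compact analogue of the diameter-dependent bound in the compact case, which is stronger than bare strict positivity; your argument yields positivity but no estimate.
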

\begin{proof}
The computation from \cite{Sch98} can be adopted. We summarize the main points that we need.

Given a coordinate vector field  $\pt /\pt s^i $ on $S$, and the horizontal lift $v_i$, define
$$
\varphi_{i \ol\imath} = \langle v_i, v_i  \rangle_{\omega_{\X'}}
$$
Then
\begin{equation}\label{eq:phi}
(1 + \Box_s) \varphi_{i \ol\imath} = \|A_i\|^2(z,s),
\end{equation}
where $\Box_s$ denotes the (semi-positive) Laplacian, and $\|A_i\|(z,s)$ the pointwise norm of the harmonic representative of $\rho_s(\pt/\pt s^i)$.

The results of the previous section imply that the quantities occurring in  \eqref{eq:phi} are $\Cka^{k,\la}$-tensors on the total space and also define such tensors, when restricted to the fibers of $f$, and of class $C^\infty$ on $\X'$. Yau's maximum principle \cite[Theorem 1]{Yau78} applies to restrictions of \eqref{eq:phi} to the fibers of $f$ immediately yields that $\varphi_{i\ol\imath}\geq 0$. 

The integral of \eqref{eq:phi} along a fiber yields the \wp\ norm of $\pt/\pt s^i|_s$:
$$
\|\pt/\pt s^i|_s\|^2_{WP} = \int_{X_s}\|A_i\|^2(z,s) g\/ dV. 
$$
(Again we are using Gaffney's result). 

Let $\rho_s(\pt/\pt s^i)\neq 0$, i.e.\ $A_i\neq0$. one can show that $\varphi_{i\ol\imath}(z,s)$ has no zeroes. This follows from the lower heat kernel estimate in the complete case  as given in \cite[Cor. 4.3]{St92}. This shows that the heat kernel is strictly positive on the fiber $X_s'$. Then the argument is the same as in \cite[Prop. 1]{Sch12} except that we do not have a fixed positive lower bound in terms of the diameter of the fibers.  
\end{proof}

\begin{corollary}
\label{pshvar}
For a family of log-canonically polarized manifolds $\DD \overset{i}{\hookrightarrow} \X \overset{f}{\rightarrow} S$ the relative adjoint bundle $K_{\X/S} \otimes \DD$ is nef. If the family is effectively parameterized,
$K_{\X/S} \otimes \DD$ is big.
\end{corollary}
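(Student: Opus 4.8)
The plan is to deduce the corollary from Theorem~\ref{posvar} by transporting the fiberwise semipositivity from the open part $\X'$ to the total space $\X$, through the singular hermitian metric $h_{KE}$ that the relative volume form $\omega^n_{\X'/S}$ induces on $K_{\X/S}\otimes\DD$. Over $\X'=\X\setminus\DD$ one has $K_{\X/S}\otimes\DD\simeq K_{\X'/S}$, and the Chern curvature of $h_{KE}$ there is exactly $\omega_{\X'}=-\Ric(\omega^n_{\X'/S})$, which is $\geq 0$ by Theorem~\ref{posvar}. The decisive local computation is to write $h_{KE}$ in the logarithmic frame $\frac{dz^1}{z^1}\wedge\cdots\wedge\frac{dz^k}{z^k}\wedge dz^{k+1}\wedge\cdots\wedge dz^n$ of $K_{\X/S}\otimes\DD$: by the normalisation \eqref{asym} the frame of $K_{\X'/S}$ has norm $\bigl(\prod_{i=1}^k|z^i|^2\log^2|z^i|^2\bigr)v$, so the divisor twist absorbs exactly the factor $\prod|z^i|^2$ and the resulting weight $\phi:=-\log|\,\cdot\,|^2_{h_{KE}}$ takes the shape $\phi=-\sum_{i=1}^k\log\log^2|z^i|^2+(\text{a bounded }C^k\text{ term})$, with $i\dl\dbar\phi=\omega_{\X'}$ on $\X'$. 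The point is that the twist removes the would-be term $-\sum\log|z^i|^2$, whose $i\dl\dbar$ carries negative Lelong mass along $\DD$, leaving only the much milder iterated-logarithm singularity.

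First I would prove that $K_{\X/S}\otimes\DD$ is pseudoeffective across $\DD$. The weight $\phi$ is plurisubharmonic on $\X'$, since $i\dl\dbar\phi=\omega_{\X'}\geq 0$. Because $\log^2|z^i|^2\to+\infty$ as $z^i\to 0$, one has $\phi\to-\infty$ along $\DD$; in particular $\phi$ is locally bounded above near the divisor. By the standard removable-singularity theorem for plurisubharmonic functions along an analytic set, $\phi$ then extends to a plurisubharmonic function on all of $\X$, that is, the curvature current $i\dl\dbar\phi\geq 0$ on $\X$. Thus $h_{KE}$ is a positively curved singular metric on $K_{\X/S}\otimes\DD$.

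To upgrade pseudoeffectivity to nefness I would control the Lelong numbers of $\phi$. Along $\DD$ the only singular contribution is $-\log\log^2|z^i|^2=-2\log\bigl(-\log|z^i|^2\bigr)=o(\log|z^i|)$, so $\nu(\phi,x)=0$ at every point $x\in\X$. A closed positive $(1,1)$-current with identically vanishing Lelong numbers can be regularised, by Demailly's approximation theorem, by smooth forms in the same class whose negative part is controlled by the Lelong numbers and hence tends to zero; consequently $c_1(K_{\X/S}\otimes\DD)$ is a limit of K\"ahler classes, so $K_{\X/S}\otimes\DD$ is nef.

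Finally, for bigness under the effective-parameterisation hypothesis, Theorem~\ref{posvar} gives $\omega_{\X'}>0$ on $\X'$. Near $\DD$ the curvature is quasi-isometric to the Poincar\'e model of Section~2 and therefore dominates any fixed smooth K\"ahler form $\omega_\X$, while away from $\DD$ strict positivity together with compactness yields $\omega_{\X'}\geq\varepsilon\,\omega_\X$ as well; hence the extended current $i\dl\dbar\phi$ is a K\"ahler current, and by the characterisation of big classes as those containing a K\"ahler current (Demailly--P\u{a}un) the bundle $K_{\X/S}\otimes\DD$ is big. The main obstacle is precisely the passage across $\DD$: one must verify that the twisted weight extends as a genuine plurisubharmonic function carrying no negative mass on the divisor, so that the positivity produced on $\X'$ survives to $\X$, and, for the last step, that the Poincar\'e growth of $\omega_{\X'}$ can be compared to a fixed K\"ahler form uniformly up to $\DD$. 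The semipositivity on $\X'$ from Theorem~\ref{posvar} enters only as an input; all the work is in the asymptotic analysis of $\phi$ near $\DD$ dictated by \eqref{asym}.
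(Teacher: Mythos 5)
Your first two steps are sound and essentially reproduce the paper's own argument, in somewhat greater detail: the paper likewise computes the curvature current of $(\omega^n_{\X'/S})^{-1}$ on all of $\X$, observes that the integration current $-[\DD]$ produced by the term $-\sum\dl\dbar\log\|\sigma_i\|^2$ is exactly compensated by twisting with $\Oh(\DD)$ (your remark that the logarithmic frame absorbs the factor $\prod_i|z^i|^2$, leaving only the $-\sum_i\log\log^2|z^i|^2$ singularity), concludes pseudoeffectivity, and derives nefness from the vanishing of the Lelong numbers of the resulting current. So up to and including nefness your proposal and the paper agree.

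The bigness step, however, has a genuine gap. You assert that the extended current is a K\"ahler current because ``near $\DD$ the curvature is quasi-isometric to the Poincar\'e model and therefore dominates any fixed smooth K\"ahler form.'' But the quasi-isometry with the Poincar\'e model concerns only the restrictions $\omega_{\X'}|_{X'_s}$, i.e.\ the fiber directions; it gives no lower bound in the horizontal directions, where the relevant quantity is the geodesic curvature $\varphi_{i\jbar}=\<v_i,v_j\>_{\omega_{\X'}}$. Theorem \ref{posvar} yields $\varphi_{i\ibar}>0$ pointwise on $\X'$ via strict positivity of the heat kernel, but --- as the paper states explicitly at the end of that proof --- there is no fixed positive lower bound, since the fibers are complete and noncompact and the compact-case argument in terms of the diameter is unavailable. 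Because $(1+\Box_s)\varphi_{i\ibar}=\|A_i\|^2$ by equation (\ref{eq:phi}) and $\|A_i\|$ may decay towards $\DD$, the geodesic curvature may tend to $0$ along the divisor, in which case $\omega_{\X'}\geq\varepsilon\,\omega_{\X}$ fails for every $\varepsilon>0$ and no K\"ahler current is obtained; compactness away from $\DD$ does not help, since the possible degeneration occurs precisely as one approaches $\DD$. This is exactly why the paper instead invokes Boucksom's bigness criterion \cite[Cor. 3.3]{Bou02}: a pseudoeffective class containing a closed positive current $T$ whose absolutely continuous part satisfies $\int_{\X} T_{ac}^{\dim\X}>0$ is big. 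Here $T_{ac}=\omega_{\X'}$ is strictly positive on $\X'$, which requires only the pointwise positivity furnished by Theorem \ref{posvar}, not a uniform lower bound. Replacing your Demailly--P\u{a}un K\"ahler-current step by this volume criterion repairs the proof.
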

\begin{proof}
We compute the curvature current of the singular hermitian metric $(\omega_{\X'/S}^n)^{-1}=(\omega_s^n)^{-1}_{s \in S}$ on $K_{\X/S}$ now on the whole $\X$. From the  Monge-Amp\`ere equation (\ref{MA}) we see that the only additional term that vanishes by restricting to $\X'$ is just $-\DD$ which comes from the term $-\sum \dl \dbar \log ||\sigma_i||^2$; it is compensated by adding $\DD$. This shows that 
$K_{\X/S} \otimes \DD$ is pseudoeffective. Here we take the canonical singular metric on $\DD$. The nefness follows from the fact that the curvature current of the metric on $K_{\X/S} \otimes \DD$ has zero Lelong numbers. The bigness in the effectively parameterized situation then follows from the strict positivity of $K_{\X'/S}$ and Boucksom's bigness criterium \cite[Cor. 3.3]{Bou02}. 
\end{proof}

\section{The case of a big line bundle}
Let $E \overset{i}{\hookrightarrow} \X \overset{f}{\rightarrow} S$ be a holomorphic family of smooth log pairs $(X_s,E_s)$ and $F$ a big line bundle on $\X$. We assume that we have a decomposition
$$
F= A + E
$$
where $A$ is ample on $\X$. We choose a smooth positive metric $h_A$ on $A$ and a smooth hermitian metric  $h_i$ on each irreducible component $E_i$ of $E$. Using the canonical section $\sigma_i$ cutting out the divisor $E_i$, we define another metric on $A$ by setting
$$
h_{A,\varepsilon} := h_A \cdot \left(\prod_i{|\sigma_i|_{h_i}^2\log^2(|\sigma_i|_{h_i}^2)} \right)^{\varepsilon}
$$
We now make the assumption that there exists an $\varepsilon >0$ such that 
$$
\mathrm{i} \,\Theta_{h_{A,\varepsilon}}(A) =   \mathrm{i} \,\Theta_{h_A}(A) -  \varepsilon \; \sqrt{-1}\sum_i{ \dl \dbar \log\left(|\sigma_i|_{h_i}^2 \log^2(|\sigma_i|_{h_i}^2)\right)} >0 \quad \mbox{on} \quad \X'.
$$
In general, such an $\varepsilon$ need not to exist and because it depends on the curvatures of $h_A$ and $h_i$ on $\X$.
Then we can equip $F$ with the metric $h_{F,\varepsilon}$ defined by
$$
h_{F,\varepsilon}= h_{A,\varepsilon} \cdot h_{E,\operatorname{sing}},
$$
where $h_{E,\operatorname{sing}}$ is the canonical singular hermitian metric on $E$ given by the section $\sigma=\prod_i{\sigma_i}$. We note that $(E,h_{E,\operatorname{sing}})|_{\X'}$ is the trivial line bundle equipped with the trivial metric. 

Now the hermitian bundle $(A,h_{A,\varepsilon})$ fulfils the requirements of our main theorem (note here that it works with a power $\varepsilon>0$ instead of $1$ as well) and we get the Nakano positivity of
$$
f_*(K_{\X/S}+E+A)=f_*(K_{\X/S}+F).
$$
If we apply instead the general result from \cite{BP08} directly to the hermitian bundle $(F,h_F)$ with $h_F=h_A \cdot h_{E,\operatorname{sing}}$ we get first that
$$
f_*((K_{\X/S}+F)\otimes \mathcal{J}(h_F))
$$
is positive in the singular sense of Griffiths. But here we have $ \mathcal{J}(h_F)=\Oh(-E)$, so we can conclude that
$$
f_*((K_{\X/S}+F)\otimes \mathcal{J}(h_F)) = f_*(K_{\X/S}+A)
$$
is in fact positive in the sense of Nakano using the result from \cite{Be09}. Of course we should mention here that we have
$$
f_*(K_{\X/S}+A) \subset f_*(K_{\X/S}+E+A)
$$
as sheaves but not as hermitian bundles because we changed the metric on $A$ for the larger one. If the decomposition $F=A+E$ is a relative Zariski decomposition, both sheaves coincide. 
If one applies \cite{BP08} to $(L,h_{F,\varepsilon})$ with a trivial multiplier ideal sheaf $\mathcal{J}(h_{F,\varepsilon})$ we get Griffiths positivity of $f_*(K_{\X/S}+F)$ only in the weaker singular sense. 

\section{Computation of the curvature}
Computing the curvature of the $L^2$-metric on $f_*(\Omega^n(\log \DD))_{\X/S}(\LL)$ requires taking derivatives in the base direction of fiber integrals, which can be realized by taking Lie derivatives of the integrands. These Lie derivatives can be split up by introducing Lie derivatives of $(n,0)$-forms with values in $L$. They are computed in terms of covariant derivatives with respect to the Chern connection on
$(X'_s,\omega_s)$ and the hermitian holomorphic bundle $(L_s,h_s)$.
We use the symbol $;$ for covariant derivatives and $,$ for ordinary derivatives. Greek letters indicate the fiber direction, whereas Latin indices stand for directions on the base. Because we are dealing with alternating $(p,q)$-forms, the coefficients are meant to be skew-symmetric. Thus every such $(p,q)$-form carries a factor $1/p!q!$, which we suppress in the notation. These factors play a role in the process of skew-symmetrizing the coefficients of a $(p,q)$-form by taking alternating sums of the (not yet skew-symmetric) coefficients. We adopt the Einstein convention of summation.

\subsection{Setup}\label{setup}
By polarization, it is sufficient to treat the case where $\dim S=1$ for the computation of the curvature, which simplifies the notation. Therefore, we set $s=s^1, v_s=v_1$, etc. We write $s,\sbar$ for the indices $1,\ovl{1}$ so that
$$
v_s = \dl_s + a_{s}^{\la}\dl_{\la}
$$
and
$$
A_s = A_{s\lb}^{\la}\dl_{\la}dz^{\lb}.
$$
We assume local freeness of the sheaf $f_*(\Omega^n(\log \DD))_{\X/S}(\LL)$. According to Corollary \ref{cor:repr}, we can represent local sections of this sheaf by holomorphic 
sections of $(\Omega^n(\log \D))_{\X'/S}(\LL|_{\X'})$, which restrict to holomorphic and square integrable $(n,0)$-forms on the open fibers $X'_s$. We denote such a section by $\psi$. In local coordinates, we have
\begin{eqnarray*}
\psi|_{X'_s} &=& \psi_{\la_1 \ldots \la_n}dz^{\la_1}\we \ldots dz^{\la_n}\\
&=& \psi_{A_n}dz^{A_n},
\end{eqnarray*}
where $A_n=(\la_1,\ldots,\la_n)$. The $\dbar$-closedness of $\psi$ means that
\begin{eqnarray}
\label{dbar-closedness}
\psi_{A_n;s} = 0 \quad \mbox{and} \quad
\psi_{A_n;\lb} = 0 \quad \mbox{for all} \quad A_n, 1\leq \beta \leq n.
\end{eqnarray}

\subsection{Cup product}
\begin{definition}
Let $s \in S$ and $A=A_{s\lb}^{\la}(z,s)\dl_{\la}dz^{\lb}$ be the Kodaira-Spencer form on the fiber $X'_s$. The wedge product together with the contraction defines a map 
$$
A_{i\lb}^{\la}\dl_{\la}dz^{\lb}\cup \; : H^0(X_s,\Omega^n_{X_s}(\log D_s)(L|_{X_s})) \to A^{0,1}_{(2)}(X'_s,\Omega^{n-1}_{X'_s}(L|_{X'_s})), 
$$
which can be described locally by
\begin{eqnarray*}
&&\left( A_{i\ld}^{\lc}\dl_{\lc}dz^{\ld}\right) \cup \left(\psi_{\la_1 \ldots \la_n}\;dz^{\la_1}\we \ldots \we dz^{\la_n} \right)\\
&=& A_{i\lb}^{\lc} \psi_{\lc\la_1\ldots \la_{n-1}}\; dz^{\lb}\we dz^{\la_1} \we \ldots \we dz^{\la_{n-1}}.
\end{eqnarray*}
\end{definition}
The fact that $A_i \cup \psi$ is indeed square integrable will be proved in Lemma \ref{integrability}.

\subsection{Lie derivatives}
Now we choose a local frame $\{\psi^1,\ldots,\psi^r\}$ according to Corollary \ref{cor:repr}.
The components of the metric tensor $H^{\lbar k}$ for $f_*(\Omega^n(\log \DD))_{\X/S}(\LL)$ on the base space $S$ are given by 
$$
H^{\lbar k}(s):=\<\psi^k,\psi^l\> :=\<\psi^k|_{X'_s},\psi^l|_{X'_s}\> = \int_{X'_s}{\psi^k_{A_n}\psi^{\lbar}_{\ovl{B}_n}g^{\ovl{B}_nA_n} h|_{X'_s}\,dV}.
$$
We also write
$$
\psi^k \cdot \psi^{\lbar}= \psi^k_{A_n}\psi^{\lbar}_{\ovl{B}_n}g^{\ovl{B}_nA_n} h|_{X'_s}
$$
for the pointwise inner product of $L|_{X'_s}$-valued $(n,0)$-forms. Here and in the following we write $g$ for the hermitian metric associated to the complete K\"ahler from $\omega_s$.
When we compute derivatives with respect to the base of these fiber integrals, we apply Lie derivatives with respect to the horizontal lifts of the tangent vectors according to Lemma \ref{lemma 1}. This simplifies the computation in a considerable way. In order to break up the Lie derivative of the pointwise inner product (which is a relative $(n,n)$-form), we need to introduce Lie derivatives of relative differential forms with values in a line bundle. This can be done by using the hermitian connection $\nabla$ on 
$\Lambda^{n,0}T^*_{\X'/S}\otimes \LL|_{\X'}$ induced by the Chern connections on $(T_{X'_s},\omega_{X_s})$ and $(L_s,h_s)$. We define the Lie derivative of $\psi$ with respect to the horizontal lift $v$ by using Cartan's formula
\begin{equation}
\label{DefLieDer}
L_v\psi := L_v (\psi_{\X'/S}) := \left(\delta_v\circ \nabla + \nabla \circ \delta_v\right)\psi
\end{equation}
and similar for the Lie derivative with respect to $\vbar$. 

Taking Lie derivatives is not type-preserving. We have the type decomposition for $\psi=\psi^k$ or $\psi=\psi^l$ and $v=v_s$
$$
L_v\psi = L_v\psi' + L_v\psi'',
$$
where $L_v\psi'$ is of type $(n,0)$ and $L_v\psi''$ is of type $(n-1,1)$. In local coordinates, we have
\begin{eqnarray}
\label{Lv'}
L_v\psi'  = \left(\psi_{A_n;s} + a_s^{\la}\psi_{A_n;\la} + \sum_{j=1}^n{a_{s;\la_j}^{\la}
\psi_{
{\tiny\vtop{
\hbox{$\la_1\ldots\la\ldots\la_n$}\vskip-.8mm
\hbox{$\phantom{\la_1\ldots}{|\atop j} $}}}}}
\right)
\;dz^{A_n}
\end{eqnarray}
\begin{eqnarray}
\label{Lv''}
L_v\psi'' &=& \sum^n_{j=1} A^\la_{s\lb_n}
\psi_{ {\tiny\vtop{ \hbox{$\la_1\ldots\la\ldots\la_n$\;}\vskip-.8mm \hbox{$\phantom{\la_1\ldots}{|\atop j} $}}}}
\vtop{\hbox{$dz^{\la_1}\wedge\ldots\wedge dz^{\lb_n}\wedge\ldots\wedge
dz^{\la_n} 
$}\hbox{$\phantom{dz^{\la_1}\we\ldots\we \; }{|\atop j} $}}
\end{eqnarray}
One justification for using Lie derivatives is given by the following lemma, which allows us to express some components of the Lie derivatives as cup products with the Kodaira-Spencer form:
\begin{lemma} \label{prim}
We have
\begin{eqnarray}
L_v\psi''&=&A_s \cup \psi\label{id1}
\end{eqnarray}
and it is primitive on the fibers.
\end{lemma}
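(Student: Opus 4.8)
The plan is to establish the two assertions of Lemma \ref{prim} separately: first the identity $L_v\psi'' = A_s \cup \psi$ as $L|_{X'_s}$-valued $(n-1,1)$-forms, and then the primitivity of this form on each fiber. Both statements are pointwise tensorial identities on the open fibers $X'_s$, so it suffices to verify them in the adapted local coordinates $(z,s)$ described in Section \ref{famlogpairs}, and the Poincar\'e-type asymptotics play no role at this stage beyond guaranteeing (via Lemma \ref{le:AiCkl} and Corollary \ref{cor:repr}) that all objects involved are genuine $\Cka^{k,\la}$-tensors on $X'_s$ to which the formulae apply.

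For the first identity, I would start directly from the coordinate expression \eqref{Lv''} for $L_v\psi''$ and from the local description of the cup product given in the Definition preceding this lemma. The $(n-1,1)$-part of the Lie derivative is the sum over $j$ in which one holomorphic differential $dz^{\la_j}$ is replaced by an antiholomorphic one $dz^{\lb_n}$ weighted by the Kodaira-Spencer coefficient $A^{\la}_{s\lb_n}$; this is precisely the wedge-with-contraction operation $A_s \cup \psi$ applied to the $(n,0)$-form $\psi = \psi_{A_n}\,dz^{A_n}$, once one matches the skew-symmetrized index conventions (recall the suppressed combinatorial factors $1/p!q!$ announced in Section \ref{setup}). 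So this part is essentially a bookkeeping comparison of the two local formulae, using that both $\psi_{A_n;s}$ and $\psi_{A_n;\lb}$ vanish by the $\dbar$-closedness relations \eqref{dbar-closedness}, which is what isolates the $(n-1,1)$-component cleanly from the $(n,0)$-component \eqref{Lv'}.

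The substantive point is the primitivity. Here I would use that $\psi|_{X'_s}$ is a holomorphic $(n,0)$-form on the $n$-dimensional fiber and is therefore automatically primitive (an $(n,0)$-form on an $n$-fold is annihilated by contraction with $\omega_s$ for trivial degree reasons). The cup product $A_s \cup \psi$ replaces one holomorphic index by a barred index using the symmetric tensor $A_s^{\la\sigma}=g^{\lb\sigma}A^{\la}_{s\lb}$, whose symmetry is exactly Lemma \ref{symm}. The plan is to compute the contraction $\Lambda_{\omega_s}(A_s\cup\psi) = \omega_s \ctr (A_s \cup \psi)$ in coordinates: raising the single barred index against $g^{\zbar z}$ produces a term proportional to $A_s^{\la\sigma}\psi_{\sigma\la_2\ldots\la_{n-1}}$ summed with an antisymmetrization coming from the wedge structure of the $(n-1)$ holomorphic forms. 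The symmetry $A_s^{\la\sigma}=A_s^{\sigma\la}$ contracted against the fully skew-symmetric coefficients of $\psi$ forces this sum to vanish, giving $\Lambda_{\omega_s}(A_s\cup\psi)=0$, i.e. primitivity.

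The main obstacle I expect is precisely this last cancellation: it is a symmetry-versus-antisymmetry argument that must be carried out with care because of the skew-symmetrization conventions and the suppressed factorials, and one has to be certain that the single contraction index is the barred one produced by $A_s$ and that raising it indeed pairs the symmetric $A_s^{\la\sigma}$ against antisymmetric $\psi$ indices. Everything else is routine once the local formulae \eqref{Lv''}, the cup-product definition, and Lemmas \ref{symm} and \ref{le:AiCkl} are in place; the $L^2$/H\"older regularity needed to legitimize working with these tensors on the non-compact $X'_s$ has already been secured in the preparatory sections, so no analytic difficulty arises here.
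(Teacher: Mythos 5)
Your proposal is correct and takes essentially the same route as the paper: the identity $L_v\psi''=A_s\cup\psi$ is the same coordinate bookkeeping that rearranges \eqref{Lv''} into the local cup-product formula, and primitivity is proved exactly as in the paper by contracting with $g^{\lb_n\la_1}$ and pairing the symmetric $A_s^{\la\la_1}$ (Lemma \ref{symm}) against the skew-symmetric coefficients $\psi_{\la\,\la_1\ldots\la_{n-1}}$. Your additional appeal to the $\dbar$-closedness relations \eqref{dbar-closedness} is harmless but not needed, since \eqref{Lv''} already gives the $(n-1,1)$-component directly.
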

\begin{proof}
First we note
\begin{eqnarray*}
L_v\psi'' &=& \sum^n_{j=1} A^\la_{s\lb_n}
\psi_{ {\tiny\vtop{ \hbox{$\la_1\ldots\la\ldots\la_n$\;}\vskip-.8mm \hbox{$\phantom{\la_1\ldots}{|\atop j} $}}}}
\vtop{\hbox{$dz^{\la_1}\wedge\ldots\wedge dz^{\lb_n}\wedge\ldots\wedge
dz^{\la_n} 
$}\hbox{$\phantom{dz^{\la_1}\we\ldots\we \; }{|\atop j} $}}\\
&=& \sum^n_{j=1}
A^\la_{s\lb_n}\psi_{\la\,\la_1\ldots\la_{n-1}} dz^{\lb_n}\we dz^{\la_1}\we
\ldots \we dz^{\la_{n-1}}.
\end{eqnarray*}
To prove that $A_s \cup \psi$ is primitive, we have to show that $\Lambda_s (A_s\cup \psi)=0$ where $\Lambda_s$ is the dual Lefschetz operator with respect to the K\"ahler form
$$
\omega_s = \sqrt{-1}g_{\la\lb}\;dz^{\la}\wedge dz^{\lb}.
$$
We have
\begin{eqnarray*}
(\Lambda_s (A_s \cup \psi))_{\la_2 \ldots \la_{n-1}} &=& g^{\lb_n \la_1} A^\la_{s\lb_n}\psi_{\la\,\la_1\ldots\la_{n-1}} = A^{\la\la_1}_s\psi_{\la\,\la_1\ldots\la_{n-1}}.
\end{eqnarray*}
But now because $A^{\la\la_1}_s=A^{\la_1\la}_s$ by Lemma \ref{symm} and $\psi_{\la\,\la_1\ldots\la_{n-1}}$ is skew-symmetric we get that
$$
(\Lambda_s (A_s \cup \psi))_{\la_2 \ldots \la_{n-1}} dz^{\la_2} \wedge \ldots \wedge dz^{\la_{n-1}} =0.
$$

\end{proof}
Similarly we have a type decomposition for the Lie derivative along $\vbar = v_{\sbar}$
$$
L_{\vbar}\psi = L_{\vbar}\psi' + L_{\vbar}\psi'',
$$
where $L_{\vbar}\psi'$ is of type $(n,0)$ and $L_{\vbar}\psi''$ is of type $(n+1,n-1)$ and hence vanishes by degree reasons. In local coordinates, this is
\begin{eqnarray}
\label{Lvbar'}
L_{\vbar}\psi'=\left(\psi_{A_p\Bbar_{n-p};\sbar}+a_{\sbar}^{\lb}\psi_{A_p\Bbar_{n-p};\lb} + \sum_{j=p+1}^n{a_{\sbar;\lb_j}^{\lb}\psi_{
{\tiny\vtop{
\hbox{$A_p \lb_{p+1}\ldots\lb\ldots\lb_n$}\vskip-.8mm
\hbox{$\phantom{A_p \cbar_{p+1}\ldots}{|\atop j}$}}}}}
\right)
\;dz^{A_p}\wedge dz^{\Bbar_{n-p}}.
\end{eqnarray}
Form this we infer that $L_{\vbar}\psi=L_{\vbar}\psi'=0$ because $\psi$ is holomorphic.
The type decomposition can be verified using definition ($\ref{DefLieDer}$). We refer the reader to \cite{Na17} for a verification. 

\begin{lemma}\label{integrability}
The smooth forms $L_v \psi'$ and $L_v\psi''$ are $L^2$-integrable.
\end{lemma}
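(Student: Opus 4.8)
The plan is to reduce the $L^2$-integrability of both forms to the single principle, recorded in Section~\ref{sectionHoelderspaces}, that every $\Cka^{k,\la}$-tensor on $X'_s$ is globally square integrable, together with the stability of the class $\Cka^{k,\la}$ under tensor products, contractions and (covariant or exterior) differentiation, the latter lowering $k$ to $k-1$. Two of the three building blocks are already known to be of this class: the Kodaira--Spencer form $A_s$ by Lemma~\ref{le:AiCkl}, and the lift coefficients $a_s^\la=-g^{\lb\la}g_{s\lb}$, which are $\Cka^{k,\la}$ because $g_{s\lb}$ is so by Lemma~\ref{le:contr} and the inverse metric is uniformly controlled in the quasi-coordinates of Proposition~\ref{quasicoor}.

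The summand $L_v\psi''$ is the easy one and does not even require the full strength of this machinery. By Lemma~\ref{prim} we have $L_v\psi''=A_s\cup\psi$, and since $A_s$ is a $\Cka^{k,\la}$-tensor its pointwise norm is uniformly bounded on $X'_s$; as the cup product is a contraction followed by a wedge, one has $|A_s\cup\psi|\le C|\psi|$ pointwise, whence $\|L_v\psi''\|_{L^2}\le C\|\psi\|_{L^2}<\infty$ because $\psi$ is square integrable by Corollary~\ref{cor:repr}.

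For $L_v\psi'$ I would first discard the term $\psi_{A_n;s}$ of \eqref{Lv'}, which vanishes by the $\dbar$-closedness \eqref{dbar-closedness}. The two remaining terms are a contraction of $a_s$ with the fibrewise covariant derivative $\nabla\psi$ and a product of $\psi$ with $\nabla a_s$; geometrically they are precisely the $(n,0)$- and $(n-1,1)$-components of $d(a_s\lrcorner\psi)$ singled out by type. The crux is therefore to show that $\psi$ itself is of class $\Cka^{k,\la}$, for then $a_s\lrcorner\psi\in\Cka^{k,\la}$, its exterior derivative is $\Cka^{k-1,\la}$, and so is each of its type-components. This is where I expect the main difficulty to lie, and it is resolved by the bounded geometry of the situation: $\psi$ is a holomorphic section of $E=K_{X'_s}\otimes L|_{X'_s}$, whose induced hermitian metric has bounded geometry over $(X'_s,\omega_s)$ since its curvature is built from $\omega_s$ and $-\Ric(\omega_s)$, both uniformly bounded in quasi-coordinates. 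A holomorphic $L^2$ section of such a bundle is automatically bounded, by the sub-mean-value inequality on the quasi-coordinate balls of radius $\ge 1/2$ supplied by Proposition~\ref{quasicoor}, and then all its covariant derivatives are bounded by the Cauchy estimates; hence $\psi\in\Cka^{k,\la}$.

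With $\psi$, $a_s$ and $A_s$ all of class $\Cka^{k,\la}$, both $L_v\psi'$ and $L_v\psi''$ are $\Cka^{k-1,\la}$-tensors and are therefore globally square integrable. It is worth emphasizing that the individual coefficients appearing in \eqref{Lv'}, such as $a_{s;\la_j}^\la$ and $\psi_{A_n;\la}$, are separately singular along $\DD$ when read in the coordinates $z$; what the $\Cka^{k,\la}$-bookkeeping encodes is exactly the cancellation between the growth of $\nabla_\la\psi$ and the decay $a_s^\la=O(|z^\la|\log|z^\la|)$ found in the proof of Lemma~\ref{lemma 1}, so that the assembled tensors are bounded even though their coordinate components are not.
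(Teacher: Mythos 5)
Your treatment of $L_v\psi''$ is correct, and it takes a genuinely cleaner route than the paper: where the paper tabulates the coordinate orders of the components $A^{\la}_{s\lb}$ and then checks that the one ``really critical term'' can only pair harmlessly inside $\int_{X'_s}(A_s\cup\psi)\wedge\ovl{(A_s\cup\psi)}$, you use that $|A_s|_{\omega_s}$ is uniformly bounded (Lemma \ref{le:AiCkl} together with Proposition \ref{quasicoor}) and that contraction/wedge obeys a pointwise Cauchy--Schwarz bound, reducing everything to $\|\psi\|_{L^2}<\infty$ from Corollary \ref{cor:repr}. That half stands.

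The $L_v\psi'$ half, however, has a genuine gap: you discard the term $\psi_{A_n;s}$ of \eqref{Lv'} by appealing to \eqref{dbar-closedness}. The first identity in \eqref{dbar-closedness} as printed is a typo: $\dbar$-closedness (holomorphy) of $\psi$ on the total space kills only the conjugate derivatives, i.e. $\psi_{A_n;\sbar}=0$ and $\psi_{A_n;\lb}=0$; the covariant derivative $\psi_{A_n;s}$ in the \emph{holomorphic} base direction does not vanish in general. One can see that it cannot vanish: if $\psi_{A_n;s}\equiv 0$, then $L_v\psi'$ would coincide with the fiberwise $\dl$-exact form $\dl(a_s\lrcorner\psi)$, so by Gaffney's theorem (Proposition \ref{GaffneyStokes}) and $\dl^*\psi^l=0$ (Lemma \ref{BKN}) one would get $\dl_s H^{\lbar k}=\<L_v\psi^k,\psi^l\>=0$ identically from Proposition \ref{firstordervar}; the $L^2$-metric would then be constant in every local frame and its curvature zero, contradicting Theorem \ref{mainresult}. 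Consistently, the paper's own proof keeps this term and estimates it: since $\psi$ is a holomorphic section of $\Omega^n(\log \DD)_{\X/S}(\LL)$ on the total space, $\psi_{A_n;s}=O\left(1/|z^1\cdots z^k|\right)$, the same pole order as $\psi_{A_n}$ itself, which is exactly what makes $(L_v\psi)'_{A_n}=O\left(1/|z^1\cdots z^k|\right)$ and hence square integrable. As written, your argument proves integrability only of the $\dl$-exact part of $L_v\psi'$; you still must handle $\psi_{A_n;s}\,dz^{A_n}$, either by the coordinate estimate just quoted or by extending your bounded-geometry argument to base-direction derivatives (using the Fr\'echet differentiability of $s\mapsto u|_{X'_s}$ and Lemma \ref{le:contr}).

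A secondary, fillable issue: your key claim $\psi\in\Cka^{k,\la}$ concerns an $L$-valued tensor, whereas the paper's H\"older formalism and the lemma that $\Cka^{k,\la}$-tensors are square integrable are formulated for scalar tensors. To run the sub-mean-value and Cauchy-estimate argument you need holomorphic frames of $L$ over the quasi-coordinate charts whose norms and connection coefficients are uniformly bounded; these exist because the curvature $\omega_s$ is bounded with all derivatives in quasi-coordinates (solve for a bounded local potential and split off the pluriharmonic part), but this is a nontrivial step that ``Cauchy estimates'' glosses over.
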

\begin{proof}
We use local coordinates $z^1,\ldots,z^n$ in a neighbourhood $U$ of a point $p \in D_s \subset X_s$ where $D_s \cap U = \{z^1\cdots z^k=0\}$.
First we now that we have
$$
\psi_{A_n} = O\left(\frac{1}{|z^1\cdots z^k|}\right)
$$
and
$$
a_s^{\la} = O(|z^{\la}| \log |z^{\la}|) \quad \mbox{for} \quad 1 \leq \la \leq k \quad \mbox{else} \quad a_s^{\la} = O(1).
$$
Form the local expression (\ref{Lv'}) we thus see that 
$$
(L_v\psi)'_{A_n} = O\left(\frac{1}{|z^1\cdots z^k|}\right),
$$
so it is again square integrable. 

To prove that $L_v\psi''$ is square integrable is more complicated. We first look at the order of $A^{\la}_{s\lb}$:
\begin{eqnarray*}
A^{\la}_{\lb} &=& O(1) \quad \mbox{for} \quad 1\leq \la=\beta \leq k \quad \mbox{or } \quad k+1 \leq \la,\beta \leq n\\
A^{\la}_{\lb} &=& O\left(\frac{1}{|z^{\beta}|\log |z^{\beta}|}\right) \quad \mbox{for} \quad \la>k \mbox{ and } \beta\leq k \\
A^{\la}_{\lb} &=& O\left(|z^{\la}|\log |z^{\la}|\right) \quad \mbox{for} \quad \la \leq k \mbox{ and } \beta > k \\
A^{\la}_{\lb} &=& O\left(\frac{|z^{\la}|\log |z^{\la}|}{|z^{\beta}|\log |z^{\beta}|}\right) \quad \mbox{for} \quad 1 \leq \la\neq \beta \leq k. 
\end{eqnarray*}
To prove that $L_v\psi''=A_s \cup \psi$ is $L^2$-integrable means to verify that
$$
\int_{X'_s}{(A_s \cup \psi) \wedge \ovl{(A_s \cup\psi)}}
$$
is finite because the form is primitive by Lemma \ref{prim}. For this we first note that sum in the expression of $(A_s \cup \psi)_{\lb_n\la_1\ldots \la_{n-1}}$ reduces to
$$
A^{\la_n}_{s\;\lb_n}\psi_{\la_n\la_1\ldots\la_{n-1}}.
$$

The only really critical term in $(A_s \cup \psi)_{\lb_n\la_1\ldots \la_{n-1}}$ occurs if $\la_n \in \{k+1,\dots,n\}$, 
$\beta_n \in \{1,\ldots,k\}$ and $\beta_n$ is among the $\la_1,\ldots, \la_{n-1}$ because in this case the order in $z^{\beta_n}$ is
$$
\frac{1}{|z^{\beta_n}|^2 \log |z^{\beta_n}|}.
$$
But then in the above integral this term can only be paired with another term that does neither contain $dz^{\beta_n}$ nor $dz^{\lb_n}$. So wee see that the product
$(A_s \cup \psi) \wedge \ovl{(A_s \cup\psi)}$ remains integrable.
 \end{proof}

We need the following lemma
\begin{lemma} \label{le:dV}
The Lie derivative of the volume element $dV=\omega_s^n/n!$ along the horizontal lift $v$ vanishes, i.e.
$$
L_v( dV)=0.
$$
\end{lemma}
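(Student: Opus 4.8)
The plan is to avoid index gymnastics and instead exploit the single structural identity that characterises the horizontal lift, namely that $v=v_s$ is orthogonal to the fibres with respect to $\omega_{\X'}$. First I would compute the contraction $\iota_v\omega_{\X'}$. Writing $\omega_{\X'}=\mi\big(g_{\la\lb}dz^\la\we dz^{\lb}+g_{s\lb}ds\we dz^{\lb}+g_{\la\sbar}dz^\la\we d\sbar+g_{s\sbar}ds\we d\sbar\big)$ and using $v=\dl_s+a_s^\la\dl_\la$ with $a_s^\la=-g^{\lb\la}g_{s\lb}$, the coefficient of $dz^{\lb}$ in $\iota_v\omega_{\X'}$ is $\mi(g_{\la\lb}a_s^\la+g_{s\lb})$, which vanishes identically because $g_{\la\lb}a_s^\la=-g_{s\lb}$; this is precisely the defining orthogonality $\<v,\dl_\la\>_{\omega_{\X'}}=0$. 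The only surviving term is the coefficient of $d\sbar$, which equals $\mi(g_{\la\sbar}a_s^\la+g_{s\sbar})=\mi\,\varphi_{s\sbar}$, the geodesic curvature. Thus $\iota_v\omega_{\X'}=\mi\,\varphi_{s\sbar}\,d\sbar$.

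Next I would feed this into Cartan's formula $L_v=\iota_v\circ d+d\circ\iota_v$. Since $\omega_{\X'}=-\mi\dl\dbar\log h$ is a $d$-closed real $(1,1)$-form on $\X'$, the first term drops out and $L_v\omega_{\X'}=d(\mi\,\varphi_{s\sbar}\,d\sbar)=\mi\,d\varphi_{s\sbar}\we d\sbar$. The crucial observation is that this two-form is divisible by $d\sbar$; hence its pull-back under the inclusion $j_s\colon X_s'\hookrightarrow\X'$ of any open fibre vanishes, because $j_s^*d\sbar=0$.

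It remains to transfer this from $\omega_{\X'}$ to the relative volume form. Because $v$ is a lift of $\dl/\dl s$, its local flow maps fibres to fibres, so restriction to a fibre commutes with the Lie derivative along $v$: concretely $j_s^*(L_v\omega_{\X'})=L_v(\omega_s)$, where $\omega_s=j_s^*\omega_{\X'}$. Combining with the previous step gives $L_v\omega_s=0$, and therefore $L_v(dV)=L_v\!\left(\omega_s^n/n!\right)=\frac{1}{(n-1)!}\,\omega_s^{n-1}\we L_v\omega_s=0$, as claimed.

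The step requiring the most care is the identification $j_s^*(L_v\omega_{\X'})=L_v(\omega_s)$, i.e.\ that the relative Lie derivative along the horizontal lift is compatible with fibrewise restriction; this is where one uses that $v$ projects to $\dl/\dl s$ and that we work on $\X'$, where $\omega_{\X'}$ is smooth and closed and carries no twist. As an alternative, purely computational route one may verify directly that $L_v(dV)=\big(v(\log\det(g_{\la\lb}))+\dl_\la a_s^\la\big)\,dV$ and that the bracket vanishes, the cancellation being a consequence of the K\"ahler symmetry $\dl_s g_{\la\lb}=\dl_\la g_{s\lb}$ forced by $d\omega_{\X'}=0$.
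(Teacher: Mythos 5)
Your proof is correct, but it takes a genuinely different route from the paper. The paper works purely locally: it computes the $(1,1)$-component of $L_v(g_{\la\lb})$ in index notation, $g_{\la\lb,s} + a_{s}^{\lc}g_{\la\lb;\lc} + a_{s;\la}^{\lc}g_{\lc\lb}$, and shows it vanishes via the K\"ahler symmetry coming from $d\omega_{\X'}=0$ (this is essentially your "alternative, purely computational route" at the end, in tensor rather than determinant form). Your main argument instead packages the same two inputs --- closedness of $\omega_{\X'}$ and the defining orthogonality of the horizontal lift --- into the global identity $\iota_v\omega_{\X'}=\mi\,\varphi\,d\sbar$ followed by Cartan's formula, so that $L_v\omega_{\X'}=\mi\,d\varphi\we d\sbar$ dies upon restriction to a fiber. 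This buys conceptual clarity and a slightly stronger conclusion: you get the vanishing of \emph{all} type components of $L_v\omega_s$ (the $(0,2)$-part of which encodes the symmetry $A_s^{\la\sigma}=A_s^{\sigma\la}$ of Lemma \ref{symm}), and the contraction identity $\iota_v\omega_{\X'}=\mi\,\varphi\,d\sbar$ is reusable --- it is the same mechanism behind the later computation $\Theta(L|_{\X'})_{\vbar v}=-\varphi$. What the paper's local computation buys is that it never has to address the one step you rightly flag as delicate: the commutation $j_s^*(L_v\omega_{\X'})=L_v(\omega_s)$. Since $v$ is a complex, non-holomorphic vector field, the "flow maps fibres to fibres" argument should be run on $\Re v$ and $\Im v$ (both of which project to $S$) and extended $\C$-linearly; alternatively, note that $L_v(ds)=d(ds(v))=0$ and $L_v(d\sbar)=0$, so $L_v$ preserves the ideal generated by $ds,d\sbar$ and hence descends to relative forms, where it agrees with the fibrewise Lie derivative used in Lemma \ref{lemma 1}. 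With that caveat spelled out, your argument is complete.
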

\begin{proof}
It suffices  to show that the $(1,1)$ component of $L_v(g_{\la\lb})$ vanishes, which implies $L_v(\det(g_{\la\lb}))=0.$ We have
$$
L_v(g_{\la\lb})_{\la\lb} = g_{\la\lb,s} + a_{s}^{\lc}g_{\la\lb;\lc} + a_{s;\la}^{\lc}g_{\lc\lb} = -a_{s\lb;\la} + a_{s;\la}^{\lc}g_{\lc\lb}=0.
$$
\end{proof}

\subsection{Main part of the computation}
We start computing the curvature by computing the first order variation. Using Lie derivatives, the pointwise inner products can be broken up:
\begin{proposition}
\label{firstordervar}
$$
\frac{\dl}{\dl s} \<\psi^k,\psi^l\> = \<L_v\psi^k,\psi^l\>,
$$
where $\dl/\dl s$ denotes a tangent vector on the base $S$ and $v$ its horizontal lift and analogous for $\dl/\dl \sbar$.
\end{proposition}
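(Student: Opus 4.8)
The plan is to reduce the base derivative of the fiber integral to a fiberwise Lie derivative and then to exploit the metric-compatibility of the Chern connection together with the holomorphicity of the frame. Write
$$
\eta = (\psi^k\cdot\psi^{\lbar})\,dV = \mi^{n^2}(\psi^k\we\psi^{\lbar})_h
$$
for the relative $(n,n)$-form whose fiber integral defines $\<\psi^k,\psi^l\>$. First I would invoke Lemma \ref{lemma 1}, which gives $\frac{\dl}{\dl s}\<\psi^k,\psi^l\> = \int_{X'_s}L_v\eta$, where $v$ is the horizontal lift of $\dl/\dl s$ and $L_v\eta$ denotes the ordinary (geometric) Lie derivative of the form $\eta$. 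The hypotheses of that lemma are in force precisely because $L_v\eta$ is integrable, which is the content of Lemma \ref{integrability} (integrability of $L_v\psi'$ and $L_v\psi''$) together with $L_v(dV)=0$ from Lemma \ref{le:dV}.

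The second step is to break up $L_v\eta$ by a Leibniz rule adapted to the hermitian pairing. Since the connection $\nabla$ on $\Lambda^{n,0}T^*_{\X'/S}\otimes\LL|_{\X'}$ entering the definition \eqref{DefLieDer} is the Chern connection, it is compatible with $h$ and with $\omega_s$; combined with $L_v(dV)=0$ this lets the geometric Lie derivative of the $h$-paired top form distribute onto the two factors as the connection Lie derivatives of \eqref{DefLieDer}:
$$
L_v\eta = (L_v\psi^k)\cdot\psi^{\lbar}\,dV + \psi^k\cdot\overline{L_{\vbar}\psi^l}\,dV ,
$$
the second summand being the contribution of the conjugate slot, in which the anti-holomorphic lift $\vbar$ acts on $\psi^l$ (here one uses $L_v\overline{\psi^l}=\overline{L_{\vbar}\psi^l}$).

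The third step is to kill the conjugate term. Because each frame section $\psi^l$ is holomorphic, the discussion following \eqref{Lvbar'} gives $L_{\vbar}\psi^l = L_{\vbar}(\psi^l)' = 0$, so the second summand vanishes and $L_v\eta = (L_v\psi^k)\cdot\psi^{\lbar}\,dV$. In this pairing only the $(n,0)$-part $L_v(\psi^k)'$ survives, since the $(n-1,1)$-part $L_v(\psi^k)''$ wedged with the $(0,n)$-form $\psi^{\lbar}$ vanishes for degree reasons. Integrating over $X'_s$ then yields $\frac{\dl}{\dl s}\<\psi^k,\psi^l\> = \<L_v\psi^k,\psi^l\>$, and the analogous identity for $\dl/\dl\sbar$ follows by conjugating.

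The hard part will be the clean justification of the Leibniz decomposition in the second step: one must verify that the geometric Lie derivative $L_v$ appearing in Lemma \ref{lemma 1} really distributes onto the two factors as the connection Lie derivatives of \eqref{DefLieDer}, with no leftover torsion or connection terms. This rests entirely on the metric-compatibility of the induced Chern connection on $\Lambda^{n,0}T^*_{\X'/S}\otimes\LL|_{\X'}$ and on $L_v(dV)=0$; once these are granted the identity is formal. The remaining care is analytic rather than conceptual, namely that every form produced lies in $L^2$ so that differentiation under the integral sign in Lemma \ref{lemma 1} is legitimate, and this is exactly what Lemma \ref{integrability} provides.
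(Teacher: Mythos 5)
Your proposal is correct and follows essentially the same route as the paper's proof: apply Lemma \ref{lemma 1} together with $L_v(dV)=0$ from Lemma \ref{le:dV}, distribute the Lie derivative over the hermitian pairing by the Leibniz rule for the connection Lie derivatives of \eqref{DefLieDer}, and kill the conjugate term via $L_{\vbar}\psi^l=0$ from holomorphicity. The only difference is cosmetic: the Leibniz step you flag as the ``hard part'' is exactly what the paper disposes of by citing the direct computation in \cite[Prop.~1]{Na17}, resting on the same metric compatibility of the Chern connection that you invoke.
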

\begin{proof}
We first apply Lemma \ref{lemma 1} and get that 
$$ 
\frac{\dl}{\dl s} \<\psi^k,\psi^l\> (s) = \int_{X'_s}{L_v(\psi^k \cdot \psi^{\lbar}\; dV)} = \int_{X'_s}{L_v(\psi^k \cdot \psi^{\lbar})\; dV}
$$
by Lemma \ref{le:dV}.  Now it follows by a direct computation (see \cite[Prop.1]{Na17}) that 
$$
L_v(\psi^k \cdot \psi^{\lbar}) = L_v\psi^k \cdot \psi^{\lbar} + \psi^k \cdot L_v\psi^{\lbar}
$$
so that
$$
\frac{\dl}{\dl s} \<\psi^k,\psi^l\> = \<L_v\psi^k,\psi^l\> +  \<\psi^k,L_{\vbar}\psi^l\> = \<L_v\psi^k,\psi^l\>
$$
because $L_{\vbar}\psi^l=0$.
\end{proof}

The above proposition is a main reason for the use of Lie derivatives. For later computations, we need to compare Laplacians:
\begin{lemma}
\label{BKN}
We have the following relation on the space $A^{p,q}_{(2)}(X'_s,L_s)$:
\begin{equation}
\laplace - \laplacedbar = (n-p-q)\cdot \id
\end{equation}
In particular, the harmonic forms $\psi \in A^{n,0}_{(2)}(X'_s,L_s)$ are also harmonic with respect to $\dl$, which is the $(1,0)$- part of the hermitian connection on 
$A^{n,0}_{(2)}(X'_s,L_s)$.
\end{lemma}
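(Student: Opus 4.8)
The plan is to deduce this relation from the Bochner--Kodaira--Nakano (BKN) identity specialised to the present geometry, where the decisive feature is that the K\"ahler form on each open fibre is itself the curvature of the twisting bundle. First I would recall that on the complete K\"ahler manifold $(X'_s,\omega_s)$ equipped with the Hermitian holomorphic line bundle $(L_s,h_s)$ the two Laplacians are related by
\begin{equation*}
\laplacedbar = \laplace + [\sqrt{-1}\,\Theta_{h_s}(L_s),\Lambda_s],
\end{equation*}
where $\Lambda_s$ is the dual Lefschetz operator with respect to $\omega_s$ and $\Theta_{h_s}(L_s)$ is the Chern curvature of $(L_s,h_s)$. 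The point that makes the zero-order term explicit is the defining relation $\omega_s = \omega_{\X'}|_{X'_s} = -\sqrt{-1}\,\dl\dbar\log h|_{X'_s}$, which says precisely that $\sqrt{-1}\,\Theta_{h_s}(L_s) = \omega_s$; thus the curvature term in the BKN identity becomes the commutator $[\omega_s\wedge\,\cdot\,,\Lambda_s]$ of the Lefschetz operator with its adjoint.

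The remaining input is then purely linear-algebraic and pointwise: on an $n$-dimensional K\"ahler vector space the standard K\"ahler commutator identity gives
\begin{equation*}
[\omega_s\wedge\,\cdot\,,\Lambda_s] = (p+q-n)\cdot\id
\end{equation*}
on forms of type $(p,q)$ with values in $L_s$, the bundle factor being inert under $\omega_s\wedge\,\cdot\,$ and $\Lambda_s$ since the curvature acts as a scalar multiple of $\id_{L_s}$. Substituting this into the BKN identity yields $\laplacedbar = \laplace + (p+q-n)\id$, i.e.\ $\laplace - \laplacedbar = (n-p-q)\id$ as claimed.

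The one subtlety to address, and the point I expect to be the main obstacle, is that $X'_s$ is non-compact, so the BKN identity must be read as an identity of operators on the appropriate $L^2$-domains rather than merely for compactly supported forms. Here I would invoke the Hodge-theoretic setup already established: the bounded geometry of $(X'_s,\omega_s)$, the Gaffney extension of the Laplacian, and Gaffney's version of Stokes' theorem (Proposition \ref{GaffneyStokes}) justify the integrations by parts needed to promote the pointwise commutator identity to the operator level on $A^{p,q}_{(2)}(X'_s,L_s)$, while the $\Cka^{k,\la}$-control of all tensors involved guarantees the requisite $L^1$-integrability of the relevant total derivatives. The algebra itself is routine; it is this analytic bookkeeping on the open fibre that carries the content.

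Finally, specialising to $(p,q)=(n,0)$ makes the zero-order term $(n-p-q)$ vanish, so $\laplace = \laplacedbar$ as operators on $A^{n,0}_{(2)}(X'_s,L_s)$. In particular any $\dbar$-harmonic $\psi$ (which for degree reasons simply means $\dbar\psi=0$) satisfies $\laplace\psi = \laplacedbar\psi = 0$, and pairing $\laplace\psi=\dl\dbarstar\psi+\dbarstar\dl\psi$ against $\psi$ (again legitimate by Gaffney's theorem) forces $\psi$ to be harmonic for the $(1,0)$-part $\dl$ of the connection as well.
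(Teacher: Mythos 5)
Your proof follows essentially the same route as the paper's: the Bochner--Kodaira--Nakano identity, the defining relation $\sqrt{-1}\,\Theta_{h_s}(L_s)=\omega_s$, and the standard K\"ahler commutator identity $[L_\omega,\Lambda_\omega]=(p+q-n)\cdot\id$ on $(p,q)$-forms, for which the paper cites \cite[Cor.VI.5.9]{De12}. The extra care you devote to the $L^2$-domains on the non-compact fibre and to justifying the final ``in particular'' claim goes beyond what the paper records (its proof treats the identity as a pointwise one between formal differential operators, the passage to the Gaffney extensions being handled by the completeness framework set up earlier), but it is consistent with that framework and does not alter the argument.
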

\begin{proof}
The Bochner-Kodaira-Nakano identity says (on the fiber $X'_s$)
$$
\laplacedbar - \laplace = \left[\sqrt{-1}\Theta(L_s),\Lambda\right].
$$
But by definition, we have $\omega_{X_s}=\sqrt{-1}\Theta(L_s)$. Furthermore, it holds (see \cite[Cor.VI.5.9]{De12})
$$
\left[L_{\omega},\Lambda_{\omega}\right]u = (p+q-n)\,u \quad \mbox{for } u \in \A^{p,q}(X'_s,L_s).
$$
\end{proof}
Next, we start to compute the second order derivative of $H^{\lbar k}$ and begin with
$$
\frac{\dl}{\dl s} H^{\lbar k} = \<L_v \psi^k,\psi^l\>.
$$
We obtain
\begin{eqnarray*}
\dl_{\sbar}\dl_s \<\psi^k,\psi^l\> &=& \<L_{\vbar}L_v\psi^k,\psi^l\> + \<L_v\psi^k,L_v\psi^l\>\\
&=& \<(L_{[\vbar,v]} + \Theta(L|_{\X'})_{\vbar v})\psi^k, \psi^l\> +  \<L_vL_{\vbar}\psi^k,\psi^l\> + \<L_v\psi^k,L_v\psi^l\>\\
&=& \<(L_{[\vbar,v]} + \Theta(L|_{\X'})_{\vbar v})\psi^k, \psi^l\> + \dl_s\<L_{\vbar}\psi^k,\psi^l\> - \<L_{\vbar}\psi^k,L_{\vbar}\psi^l\> + \<L_v\psi^k,L_v\psi^l\>.
\end{eqnarray*}
Because of $L_{\vbar}\psi^k \equiv 0$ as we just saw we get
\begin{equation}
\label{secondorder}
\dl_{\sbar}\dl_s \<\psi^k,\psi^l\> = \<(L_{[\vbar,v]} + \Theta(L|_{\X'})_{\vbar v})\psi^k,\psi^l\> + \<L_v\psi^k,L_v\psi^l\> .
\end{equation}
We will see below that the smooth $(n,0)$-form $(L_{[\vbar,v]} + \Theta(L|_{\X'})_{\vbar v})\psi^k$ is indeed square integrable which justifies that $L_{\vbar}L_v \psi^k$ is square integrable, too.

Now we treat each term on the right hand side of (\ref{secondorder}) separately. For the first summand, we have
\begin{lemma}
\begin{equation}
L_{[\vbar,v]} + \Theta(L_{\X'})_{\vbar v}= [-\varphi^{;\la}\dl_{\la} + \varphi^{;\lb}\dl_{\lb},\rule{0.3cm}{0.4pt}] - \varphi \cdot \id,
\end{equation}
where the bracket $[w,\rule{0.3cm}{0.4pt}]$ stands for a Lie derivative along the vector field $w$.
\end{lemma}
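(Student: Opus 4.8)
The plan is to evaluate the two summands on the left-hand side separately. The commutator identity $L_{\vbar}L_v - L_v L_{\vbar} = L_{[\vbar,v]} + \Theta(\LL|_{\X'})_{\vbar v}$, already used in deriving \eqref{secondorder}, is the standard curvature identity for the connection-Lie-derivatives defined in \eqref{DefLieDer} (the curvature arises because $L_v=\delta_v\circ\nabla+\nabla\circ\delta_v$ and the commutator brings in $\nabla^2=\Theta$). Hence it suffices to make $[\vbar,v]$ and $\Theta(\LL|_{\X'})_{\vbar v}$ explicit.

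First I would compute the curvature term. Since $\omega_{\X'}=\sqrt{-1}\,\Theta(\LL)$ on $\X'$ (as used in the proof of Lemma \ref{BKN}), contracting this $(1,1)$-form with the lifts $v=\dl_s+a_s^\la\dl_\la$ and $\vbar=\dl_{\sbar}+a_{\sbar}^{\lb}\dl_{\lb}$ gives
$$
\Theta(\LL)(v,\vbar)=g_{s\sbar}+a_s^\la g_{\la\sbar}+a_{\sbar}^{\lb}g_{s\lb}+a_s^\la a_{\sbar}^{\lb}g_{\la\lb}.
$$
The defining relation of the horizontal lift, $a_s^\la g_{\la\lb}=-g_{s\lb}$, makes the last two terms cancel, leaving $g_{s\sbar}+a_s^\la g_{\la\sbar}=\varphi$ by the definition of the geodesic curvature. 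Reading the curvature operator with the ordering of the commutator (equivalently, using the antisymmetry of the $2$-form) yields $\Theta(\LL|_{\X'})_{\vbar v}=-\varphi\cdot\id$, the zeroth-order contribution.

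Next I would compute the bracket. Expanding $[\vbar,v]$ and using that $v,\vbar$ are lifts of the commuting fields $\dl_s,\dl_{\sbar}$, the base components drop out and one is left with the vertical field
$$
[\vbar,v]=\vbar(a_s^\la)\,\dl_\la-v(a_{\sbar}^{\lb})\,\dl_{\lb}.
$$
The two coefficients are complex conjugates of one another (since $\ovl{[\vbar,v]}=[v,\vbar]=-[\vbar,v]$ and $\varphi$ is real), so it is enough to establish the single identity $\vbar(a_s^\la)=-\varphi^{;\la}$, the companion $v(a_{\sbar}^{\lb})=-\varphi^{;\lb}$ following by conjugation. To prove it I would differentiate $a_s^\la=-g^{\lb\la}g_{s\lb}$ along $\vbar$ and convert the resulting ordinary derivatives of the metric coefficients into covariant derivatives of $\varphi$ by means of the K\"ahler identities $g_{\la\lb,\lc}=g_{\lc\lb,\la}$ and $g_{s\lb,\la}=g_{\la\lb,s}$ (together with their conjugates), which all follow from the closedness of $\omega_{\X'}=-\sqrt{-1}\dl\dbar\log h$. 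This reorganisation is exactly the computation of \cite[Lemma 3]{Sch98} and \cite{Sch12}; the only new point is that it is performed pointwise on the open fiber $X'_s$, where $g$, the coefficients $a_s^\la$ and $\varphi$ together with all the derivatives involved are smooth and of class $\Cka^{k,\la}$ by Lemma \ref{le:contr} and the discussion of the geodesic curvature, so every manipulation is legitimate.

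Combining the two gives the claim: $L_{[\vbar,v]}$ is, by definition, the Lie derivative along $[\vbar,v]=-\varphi^{;\la}\dl_\la+\varphi^{;\lb}\dl_{\lb}$, that is the operator $[-\varphi^{;\la}\dl_\la+\varphi^{;\lb}\dl_{\lb},\rule{0.3cm}{0.4pt}]$, while $\Theta(\LL|_{\X'})_{\vbar v}=-\varphi\cdot\id$. The main obstacle is the second step: the bracket identity $\vbar(a_s^\la)=-\varphi^{;\la}$ is where the K\"ahler structure of $\omega_{\X'}$ genuinely enters and where the bookkeeping of covariant versus ordinary derivatives must be done carefully, whereas the curvature term and the final assembly are formal by comparison.
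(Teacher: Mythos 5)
Your proposal is correct and follows essentially the same route as the paper: the paper likewise contracts $\omega_{\X'}=\sqrt{-1}\,\Theta(\LL)|_{\X'}$ with the lifts to get $\Theta(\LL|_{\X'})_{\vbar v}=-\varphi$, and computes $[\vbar,v]=\vbar(a_s^\la)\dl_\la-v(a_{\sbar}^{\lb})\dl_{\lb}$ by differentiating $a_s^\la=-g^{\lb\la}g_{s\lb}$ and converting to covariant derivatives of $\varphi$ exactly as in \cite[Lemma 3]{Sch98}. Your only deviations are cosmetic: you obtain the antiholomorphic coefficient by the conjugation symmetry of $[\vbar,v]$ (where the paper just says ``in the same way''), and you defer the covariant-derivative bookkeeping to \cite{Sch98,Sch12} where the paper sketches it in two lines.
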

\begin{proof}
We first compute the vector field $[\vbar,v]$:
\begin{eqnarray*}
[\vbar,v] &=& [\dl_{\sbar} + a_{\sbar}^{\lb}\dl_{\sbar},\dl_s + a_s^{\la}\dl_{\la}] \\
&=& \left(\dl_{\sbar}(a_s^{\la}) + a_{\sbar}^{\lb}a_{a|\lb}^{\la} \right)\dl_{\la} - \left(\dl_s(a_{\sbar}^{\lb}) + a_s^{\la}a_{\sbar|\la}^{\lb} \right)\dl_{\lb}
\end{eqnarray*}
Now we have
\begin{eqnarray*}
\dl_{\sbar}(a_s^{\la}) &=& -\dl_{\sbar}(g^{\lb\la}g_{s\lb}) = g^{\lb\ls}g_{\ls\sbar|\ovl{\lt}}g^{\ovl{\lt}\la}g_{s\lb} - g^{\lb\la}g_{s\lb|s}\\
&=& g^{\lb\ls}a_{\sbar\ls;\ovl{\lt}}g^{\ovl{\lt}\la}a_{s\lb} - g^{\lb\la}g_{s\sbar;\lb}
\end{eqnarray*}
Because of $\varphi = g_{s\sbar} - g_{\la \sbar}g_{s\lb}g^{\lb\la}$ the coefficient of $\dl_{\la}$ is $g^{\lb\la}\varphi_{;\lb}=\varphi^{;\la}$. In the same way we get the coefficient of $\dl_{\lb}$.
Next, we need to compute the contribution of the connection on $L|_{\X'}$.
Because of $\sqrt{-1}[\dl,\dbar]=\sqrt{-1}\Theta(L)|_{\X'}=\omega_{\X'}$, we have
\begin{eqnarray*}
\Theta(L|_{\X'})_{\vbar v} &=& -\Theta(L|_{\X'})_{v \vbar}\\
&=& -\left(g_{s\sbar} + a_{\sbar}^{\lb}g_{s\lb} + a_s^{\la} g_{\la \sbar} + a_{\sbar}^{\lb}a_s^{\la}g_{\la\lb}\right)\\
&=& -\varphi.
\end{eqnarray*}
\end{proof}

\begin{lemma}
\begin{equation}
\label{term1}
\<(L_{[\vbar,v]}+ \Theta(L|_{\X'})_{\vbar v})\psi^k,\psi^l\> = -\<\varphi \cdot\psi^k,\psi^l\> = -\int_{X'_s}{\varphi\cdot(\psi^k\cdot\psi^{\lbar})\,dV}.
\end{equation}
\end{lemma}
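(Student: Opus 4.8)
The plan is to substitute the operator identity of the preceding lemma,
$$
L_{[\vbar,v]}+\Theta(L|_{\X'})_{\vbar v} = L_w - \varphi\cdot\id,\qquad w := -\varphi^{;\la}\dl_{\la} + \varphi^{;\lb}\dl_{\lb},
$$
directly into the inner product and to show that the Lie-derivative term contributes nothing. Writing out the pairing,
$$
\<(L_{[\vbar,v]}+\Theta(L|_{\X'})_{\vbar v})\psi^k,\psi^l\> = \<L_w\psi^k,\psi^l\> - \int_{X'_s}\varphi\,(\psi^k\cdot\psi^{\lbar})\,dV,
$$
where $\<\varphi\,\psi^k,\psi^l\>$ is by definition the fiber integral $\int_{X'_s}\varphi\,(\psi^k\cdot\psi^{\lbar})\,dV$, the whole assertion reduces to proving that $\<L_w\psi^k,\psi^l\> = 0$.

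To establish this I would split $w = \xi + \ovl{\eta}$ into its $(1,0)$-part $\xi = -\varphi^{;\la}\dl_{\la}$ and its $(0,1)$-part $\ovl{\eta} = \varphi^{;\lb}\dl_{\lb}$ and evaluate the two Lie derivatives through Cartan's formula \eqref{DefLieDer}. The key simplification comes from $\psi^k$ being a holomorphic $(n,0)$-form: its $\dbar$ vanishes, while $\dl\psi^k$ is an $(n+1,0)$-form and hence is zero for dimensional reasons on the $n$-dimensional fiber, so that $\nabla\psi^k = 0$ as an $L$-valued form. Consequently Cartan's formula collapses to $L_w\psi^k = \nabla(\delta_w\psi^k)$. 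For the antiholomorphic part one has $\delta_{\ovl\eta}\psi^k = 0$, because a $(0,1)$-vector contracts an $(n,0)$-form trivially, whence $L_{\ovl\eta}\psi^k = 0$. For the holomorphic part $\delta_\xi\psi^k$ is an $(n-1,0)$-form and $L_\xi\psi^k = \nabla(\delta_\xi\psi^k) = \dl(\delta_\xi\psi^k) + \dbar(\delta_\xi\psi^k)$; since pointwise inner products of forms of different bidegree vanish, only the $(n,0)$-component survives the pairing with the $(n,0)$-form $\psi^l$, giving $\<L_w\psi^k,\psi^l\> = \<\dl(\delta_\xi\psi^k),\psi^l\>$.

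The final step is an integration by parts transferring $\dl$ onto $\psi^l$, namely $\<\dl(\delta_\xi\psi^k),\psi^l\> = \<\delta_\xi\psi^k,\dl^*\psi^l\>$, followed by the observation that $\dl^*\psi^l = 0$. The latter holds because $\psi^l$ is $\dbar$-harmonic and therefore, by Lemma \ref{BKN}, also harmonic for $\dl$ (so $\dl\psi^l = \dl^*\psi^l = 0$ on $A^{n,0}_{(2)}(X'_s,L_s)$). This yields $\<L_w\psi^k,\psi^l\> = 0$ and hence the claimed formula \eqref{term1}.

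I expect the only genuine difficulty to lie not in this algebra but in justifying the integration by parts on the open, complete fiber $X'_s$. One must check that $\delta_\xi\psi^k$, its covariant derivative, and $\psi^l$ decay fast enough that the $(2n-1)$-form governing the adjunction, together with its exterior derivative, is $L^1$, so that Gaffney's form of Stokes' theorem (Proposition \ref{GaffneyStokes}) applies and the boundary contribution vanishes. This rests on the Poincar\'e-type asymptotics of $\omega_{\X'}$ together with the growth bounds for $\varphi^{;\la}$ and for the components of $\psi^k$ recorded in the proof of Lemma \ref{integrability}; once these are in place the adjunction $\<\dl\,\cdot\,,\psi^l\> = \<\,\cdot\,,\dl^*\psi^l\>$ is legitimate and the harmonicity of $\psi^l$ completes the argument.
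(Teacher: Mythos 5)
Your proposal is correct and follows essentially the same route as the paper: substitute the operator identity, reduce to showing the Lie-derivative term pairs to zero, identify its relevant component as $\dl$ of a contraction (the paper does this in index notation via $\dl$-closedness, you via $\nabla\psi^k=0$ and Cartan's formula, which is the same fact), then integrate by parts using Gaffney's theorem and conclude with $\dl^*\psi^l=0$ from Lemma \ref{BKN}. Your observation that the $(0,1)$-part vanishes identically, and your attention to the $L^2$ justification of the adjunction, match the paper's proof in substance.
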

\begin{proof}
The $\dl$-closedness of $\psi^k$ means that
$$
\psi^k_{;\la} = \sum_{j=1}^n
\psi^k_{
{\tiny\vtop{
\hbox{$\la_1 \ldots \la \ldots \la_n;\la_j\;$}\vskip-.8mm
\hbox{$\phantom{\la_1\ldots}{|\atop j} $}}}}.
$$
Thus
\begin{eqnarray*}
[\varphi^{;\la}\dl_{\la},\psi^k_{A_n}]' &=& \varphi^{;\la}\psi^k_{;\la} +
\sum_{j=1}^n
\varphi^{;\la}_{;\la_j}\psi^k_{
{\tiny\vtop{
\hbox{$\la_1 \ldots \la \ldots \la_n\;$}\vskip-.8mm
\hbox{$\phantom{\la_1\ldots}{|\atop j} $}}}}\\
&=& \sum_{j=1}^n (
\varphi^{;\la}\psi^k_{
{\tiny\vtop{
\hbox{$\la_1 \ldots \la \ldots \la_n\;$}\vskip-.8mm
\hbox{$\phantom{\la_1\ldots}{|\atop j} $}}}}
)_{;\la_j}\\
&=& \dl \left( \varphi^{;\la}\dl_{\la}\cup \psi^k\right).
\end{eqnarray*}
It is clear that $(\varphi^{;\la} \dl_{\la}\cup \psi^k)$ is square integrable, because $\varphi|_{X'_s}$ lies in $\Cka^{k,\la}(X'_s)$. Moreover it guarantees that this form lies in the domain of $\dl$.
This leads to
\begin{eqnarray*}
\<[\varphi^{;\la}\dl_{\la},\psi^k_{A_n}],\psi^l\> &=& \<[\varphi^{;\la}\dl_{\la},\psi^k_{A_n}]',\psi^l\>\\
&=& \<\dl\left(\varphi^{;\la}\dl_{\la}\cup \psi^k\right),\psi^l\> = \<\varphi^{;\la}\dl_{\la}\cup \psi^k, \dl^*\psi^l\> =0.
\end{eqnarray*}
Note that by Gaffney's theorem, Proposition \ref{GaffneyStokes}, the formal adjoint of $\dl$ is equal to the adjoint operator.

In the same way we get
$$
\<[\varphi^{;\lb}\dl_{\lb},\psi^k_{A_n}],\psi^l\> =0.
$$
\end{proof}

The following proposition contains important identities that allow to obtain an intrinsic expression for the curvature:
\begin{proposition}
\label{basicid}
\begin{eqnarray}
\dbar(L_v\psi^k)'&=&\dl(A_s\cup\psi^k), \label{eq:1}\\
\dbar^*(L_v\psi^k)'&=& 0, \label{eq:2}\\
\dl^*(A_s \cup \psi^k)&=&0. \label{eq:3}
\end{eqnarray}
\end{proposition}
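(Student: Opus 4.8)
The plan is to treat the three identities separately, two of them being essentially formal and the third carrying the computational weight. Throughout I would work pointwise on the smooth complete fiber $X'_s$, reserving the $L^2$ theory only for the interpretation of the adjoints $\dbar^*$ and $\dl^*$ as genuine Hilbert space adjoints; this is legitimate because all the forms in sight are $\Cka^{k,\la}$ and hence square integrable by Lemma \ref{integrability}, and because Gaffney's version of Stokes' theorem (Proposition \ref{GaffneyStokes}) guarantees that the formal adjoints and the Hilbert space adjoints agree.

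For \eqref{eq:2} I would simply observe that $(L_v\psi^k)'$ is of type $(n,0)$ by \eqref{Lv'}. Since $\dbar^*$ lowers the antiholomorphic degree by one, it sends $(n,0)$-forms into $(n,-1)$-forms, so $\dbar^*(L_v\psi^k)'=0$ for trivial bidegree reasons.

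For \eqref{eq:3} I would combine Lemma \ref{prim} with a K\"ahler identity. By Lemma \ref{prim} the form $A_s\cup\psi^k$ is primitive, i.e.\ $\Lambda_s(A_s\cup\psi^k)=0$. It is moreover $\dbar$-closed: since $A_s=\dbar v_s$ satisfies $\dbar A_s=0$ (Lemma \ref{le:AiCkl}) and $\psi^k$ is holomorphic, the Leibniz rule gives $\dbar(A_s\cup\psi^k)=(\dbar A_s)\cup\psi^k\pm A_s\cup\dbar\psi^k=0$. The K\"ahler identity $\dl^*=\sqrt{-1}\,[\Lambda_s,\dbar]$ on $(X'_s,\omega_s)$ then yields
\[
\dl^*(A_s\cup\psi^k)=\sqrt{-1}\bigl(\Lambda_s\dbar(A_s\cup\psi^k)-\dbar\Lambda_s(A_s\cup\psi^k)\bigr)=0,
\]
both terms vanishing by the two properties just established.

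The identity \eqref{eq:1} is the heart of the proposition and I expect it to be the main obstacle. Here I would argue pointwise in adapted local coordinates, exactly as in the compact situation of \cite{Na17}, since \eqref{eq:1} is an equality of smooth $(n,1)$-forms on $X'_s$ and the noncompactness plays no role in a purely pointwise computation. The inputs are: the holomorphicity relations $\psi^k_{A_n;s}=0$ and $\psi^k_{A_n;\lb}=0$ from \eqref{dbar-closedness}; the relation $A^\la_{s\lb}=a^\la_{s;\lb}$ coming from $A_s=\dbar v_s$; the symmetry $A^{\la\sigma}_s=A^{\sigma\la}_s$ of Lemma \ref{symm}; and the K\"ahler property of $\omega_s$, which lets me commute covariant derivatives at the cost of curvature terms. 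Differentiating the local expression \eqref{Lv'} for $(L_v\psi^k)'$ with $\dbar$ and the expression \eqref{Lv''} for $A_s\cup\psi^k$ with $\dl$, and using holomorphicity to kill the terms carrying a bare $;\lb$ on $\psi^k$, both sides should reduce to the same combination of $a^\la_{s;\lb\gamma}$- and $\psi^k$-terms; the curvature contributions produced when interchanging the covariant derivatives $;\la\lb$ cancel precisely because of Lemma \ref{symm} together with the skew-symmetry of the coefficients of $\psi^k$. The delicate points are the careful bookkeeping of these curvature/Christoffel terms and the verification that the manipulations remain valid on the open fiber---in particular the implicit integrations by parts needed to pass from \eqref{eq:2} and \eqref{eq:3} to statements about the \emph{true} adjoints---which is exactly what the integrability estimates of Lemma \ref{integrability} and Proposition \ref{GaffneyStokes} are there to supply.
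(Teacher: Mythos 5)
Your proposal is correct, and it differs in an instructive way from the paper's own treatment, which is a pure citation: the paper proves nothing on the spot and refers all three identities to the compact-fiber computation of \cite[Appendix A]{Na17}, adding only that the operators are the fiberwise ones and that $\dbar(L_v\psi^k)'=\dl(A_s\cup\psi^k)$ is square integrable by the proof of Lemma \ref{integrability}. What you do differently is exploit the degree-zero situation of this paper. Since $(L_v\psi^k)'$ is of type $(n,0)$, identity \eqref{eq:2} is indeed vacuous for bidegree reasons; and your proof of \eqref{eq:3} --- primitivity from Lemma \ref{prim}, $\dbar$-closedness of $A_s\cup\psi^k$ from Lemma \ref{le:AiCkl} together with holomorphicity of $\psi^k$, then the K\"ahler identity $\dl^*=\sqrt{-1}\,[\Lambda_s,\dbar]$ --- is complete and clean, and even robust against sign conventions, since both commutator terms vanish separately. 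In \cite{Na17} the corresponding identities are proved for harmonic $(n,q)$-representatives with arbitrary $q$, where neither statement is trivial; your arguments therefore buy a short, self-contained proof of two of the three identities, valid precisely because here $q=0$, while the paper's citation covers the general case wholesale. For \eqref{eq:1} you are in the same position as the paper: you correctly observe that it is a pointwise identity of smooth forms on $X'_s$, so completeness and noncompactness are irrelevant and the local computation of \cite{Na17} applies verbatim, and you name the right ingredients (the relations \eqref{dbar-closedness}, $A^\la_{s\lb}=a^\la_{s;\lb}$, Lemma \ref{symm}, and the cancellation of curvature terms against skew-symmetry); but, like the paper, you do not carry the computation out, so this part remains a deferral rather than an independent proof. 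Two harmless inaccuracies in your framing: $\psi^k$ and $A_s\cup\psi^k$ are not $\Cka^{k,\la}$-tensors (their coefficients have log-pole growth along $D_s$), so their square integrability comes from the explicit estimates in Lemma \ref{integrability} rather than from the lemma that $\Cka^{k,\la}$-tensors are $L^2$; and since \eqref{eq:1}--\eqref{eq:3} are pointwise identities involving only the formal adjoints, no appeal to Gaffney's theorem (Proposition \ref{GaffneyStokes}) is needed for the proposition itself --- it enters only afterwards, when the identities are used inside $L^2$ inner products in the curvature computation.
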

We note that here the operators $\dl,\dbar, \dl^*$ and $\dbar^*$ mean the fiberwise operators, because we are always dealing with relative forms.
For a proof we refer to \cite[Appendix A]{Na17}.
We see form the proof of Lemma \ref{integrability} that $\dbar(L_v \psi^k)'$ is again square integrable. 

Now we look at the second term in (\ref{secondorder}) and decompose it into its two types:
\begin{eqnarray*}
\<L_v\psi^k,L_v\psi^l\> &=& \<(L_v\psi^k)',(L_v\psi^l)'\> - \<(L_v\psi^k)'',(L_v\psi^l)''\>\\
&=&  \<(L_v\psi^k)',(L_v\psi^l)'\> - \<A_s \cup \psi^k, A_s \cup \psi^l\>
\end{eqnarray*}
because of (\ref{id1}).

Now let $G_{\dl}$ and $G_{\dbar}$ be the Green operators on the spaces $A^{p,q}_{(2)}(X'_s,L|_{X'_s})$ with respect to $\laplace$ and $\laplacedbar$ respectively. According to Lemma \ref{BKN} they coincide for $p+q=n$. Now we use normal coordinates (of the second kind) at a given point $s_0 \in S$. The condition $(\dl/\dl s)H^{\lbar k}|_{s_0}=0$ for all $k,l$ means that for $s=s_0$ the harmonic projection
$$
H((L_v\psi^k)')=0
$$
vanishes for all $k$. Thus, using the identity $\id = H + G_{\dbar}\laplacedbar$ we can write
$$
(L_v\psi^k)'=G_{\dbar}\laplacedbar(L_v\psi^k)' = G_{\dbar}\dbar^*\dbar(L_v\psi^k)' = \dbar^*G_{\dbar}\dl(A_s\cup \psi^k)
$$
by (\ref{eq:2}) and (\ref{eq:1}). Because the form $\dbar(L_v\psi^k)'=\dl(A_s \cup \psi^k)$ is of type $(n,1)$, we have
$G_{\dbar}=(\laplace + 1)^{-1}$ on such forms by Lemma \ref{BKN}. We proceed by
\begin{eqnarray*}
\<(L_v\psi^k)',(L_v\psi^l)'\> &=& \<\dbar^*G_{\dbar}\dl(A_s \cup \psi^k),(L_v\psi^l)'\>\\
&=&  \<G_{\dbar}\dl(A_s \cup \psi^k),\dl(A_s \cup \psi^l)\>\\
&=&  \<(\laplace + 1)^{-1}\dl(A_s \cup \psi^k),\dl(A_s \cup \psi^l)\>\\
&=&  \<\dl^*(\laplace + 1)^{-1}\dl(A_s \cup \psi^k),A_s \cup \psi^l\>.
\end{eqnarray*}
Again we used Gaffney's theorem.
Now using (\ref{eq:3}) gives
\begin{eqnarray*}
\<(L_v\psi^k)',(L_v\psi^l)'\> &=& \<(\laplace + 1)^{-1}\laplace(A_s \cup \psi^k),A_s \cup \psi^l\>\\
&=& \<(\laplace + 1)^{-1}(\laplace + 1 - 1) (A_s \cup \psi^k),A_s \cup \psi^l\>\\
&=& \<A_s \cup \psi^k,A_s \cup \psi^l\> - \<(\laplace + 1)^{-1}(A_s \cup \psi^k),A_s \cup \psi^l\>.
\end{eqnarray*}
Altogether, we have
\begin{lemma}
\begin{equation}
\label{term2}
\<L_v\psi^k,L_v\psi^l\> = - \int_{X'_s}{(\Box + 1)^{-1}(A_s \cup \psi^k)\cdot (A_{\sbar} \cup \psi^{\lbar})\, g\, dV}
\end{equation}
(We write $\Box=\laplace=\laplacedbar$ when applied to $(n-1,1)$-forms.)
\end{lemma}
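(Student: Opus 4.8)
The plan is to assemble the two computations already in hand into the single closed formula; the lemma is essentially their difference. Writing the type decomposition $L_v\psi = (L_v\psi)' + (L_v\psi)''$ into its $(n,0)$-part and $(n-1,1)$-part, I would first split the $L^2$ pairing using the Hodge--Riemann bilinear relations: since the conjugate of an $(n-1,1)$-form has type $(1,n-1)$, the cross terms wedge to degree $(n+1,n-1)=0$ and drop out, while the two diagonal terms acquire opposite Hodge--Riemann signs. This gives
\[
\<L_v\psi^k, L_v\psi^l\> = \<(L_v\psi^k)', (L_v\psi^l)'\> - \<(L_v\psi^k)'', (L_v\psi^l)''\>.
\]
By Lemma \ref{prim} the $(n-1,1)$-part is exactly $A_s \cup \psi$, so the second term equals $-\<A_s\cup\psi^k, A_s\cup\psi^l\>$, and it remains only to evaluate the first term.

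For the $(n,0)$-part I would exploit the normalisation. In normal coordinates of the second kind at $s_0$ the condition $(\dl/\dl s)H^{\lbar k}|_{s_0}=0$ forces the harmonic projection $H((L_v\psi^k)')$ to vanish, so from $\id = H + \Gbar\,\laplacedbar$ and the identities (\ref{eq:1})--(\ref{eq:2}) of Proposition \ref{basicid} one gets $(L_v\psi^k)' = \dbar^*\Gbar\,\dl(A_s\cup\psi^k)$. Inserting this, moving $\dbar^*$ onto $(L_v\psi^l)'$ where $\dbar(L_v\psi^l)' = \dl(A_s\cup\psi^l)$ by (\ref{eq:1}), replacing $\Gbar$ by $(\laplace+1)^{-1}$ on $(n,1)$-forms via Lemma \ref{BKN}, and finally using (\ref{eq:3}) to turn $\dl^*\dl$ into $\laplace$ on $A_s\cup\psi^k$, I obtain
\[
\<(L_v\psi^k)', (L_v\psi^l)'\> = \<A_s\cup\psi^k, A_s\cup\psi^l\> - \<(\laplace+1)^{-1}(A_s\cup\psi^k), A_s\cup\psi^l\>.
\]

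Substituting this into the split formula cancels the two copies of $\<A_s\cup\psi^k, A_s\cup\psi^l\>$ and leaves precisely the claimed pairing, which unwinds to the integral of $(\Box+1)^{-1}(A_s\cup\psi^k)\cdot(A_{\sbar}\cup\psi^{\lbar})$. The main obstacle --- the only genuinely delicate point in this complete, non-compact setting --- is justifying the repeated commutation of $\dbar^*$ and $\dl^*$ past the Green operator, i.e.\ that the formal adjoints act as true Hilbert-space adjoints with no boundary contributions. This is secured by Gaffney's form of Stokes' theorem (Proposition \ref{GaffneyStokes}), whose hypotheses hold because every form entering the manipulation --- $(L_v\psi)'$, $A_s\cup\psi$, and their images under $\Gbar$ and the first-order operators --- is square-integrable by Lemma \ref{integrability} and lies in the appropriate domain. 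One also needs $(\laplace+1)^{-1}$ to be a well-defined bounded operator on the primitive $(n-1,1)$-forms, which follows from the strong Hodge decomposition and boundedness of the Green operator in Corollary \ref{GreenOp}.
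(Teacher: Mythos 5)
Your proposal is correct and follows essentially the same route as the paper: the type decomposition with the Hodge--Riemann sign, the identification $(L_v\psi)''=A_s\cup\psi$ from Lemma \ref{prim}, the vanishing of the harmonic projection in normal coordinates combined with Proposition \ref{basicid} and Lemma \ref{BKN} to evaluate $\<(L_v\psi^k)',(L_v\psi^l)'\>$, and Gaffney's theorem to justify the adjoint manipulations. The cancellation of the two copies of $\<A_s\cup\psi^k,A_s\cup\psi^l\>$ is exactly how the paper arrives at formula (\ref{term2}).
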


Now our main result Theorem \ref{mainresult} follows form (\ref{secondorder}), (\ref{term1}), (\ref{term2}) and the fact that
$R^{\lbar k}_{i\jbar}(s_0)=-\dl_{\jbar}\dl_i H^{\lbar k}(s_0)$ in normal coordinates at a point $s_0 \in S$.


\begin{thebibliography}{[Be09]}

\bibitem[Be09]{Be09}{\scshape B.~Berndtsson}: {\em Curvature of vector bundles associated to holomorphic fibrations}, Ann. Math. \textbf{169}, 531-560 (2009).
\bibitem[Bou02]{Bou02}{\scshape S.~Boucksom}: {\em On the volume of a line bundle}, Int. J. of Math. \textbf{13}, no.10 1043-1063 (2002).
\bibitem[BP08]{BP08}{\scshape B.~Berndtsson, M.~P\u{a}un}: {\em Bergman kernels and the pseudoffectivity of relative canonical bundles}, Duke Math. Journal \textbf{145} no.2, 341-378  (2008).
\bibitem[BPW17]{BPW17}{\scshape B.~Berndtsson, M.~P\u{a}un, X.~Wang}: {\em Algebraic fiber spaces and curvature of higher direct images},  arXiv:1704.02279v1 (2017).
\bibitem[BZ98]{BZ98}{\scshape J.-L.~Brylinski, S.~Zucker}: {\em An overview of Recent Advances in Hodge Theory}, Complex Manifolds, Springer (1998).
\bibitem[CY80]{CY80}{\scshape S.Y..~Cheng, S.T.~Yau}: {\em On the existence of a complete K\"ahler metric in non-compact complex manifolds and the regularity of Fefferman's equation}, Comm. Pure Appl. Math. \textbf{33}, 507-544 (1980).
\bibitem[De12]{De12}{\scshape J.-P.~Demailly}: {\em Complex Analytic and Differential Geometry},
https://www-fourier.ujf-grenoble.fr/~demailly/manuscripts/agbook.pdf, Grenoble (2012).
\bibitem[Fu92]{Fu92}{\scshape A.~Fujiki}: {\em An $L^2$-Dolbeault lemma and its applications}, Publ. RIMS, Kyoto Univ. \textbf{20}, 977-1005 (1984).
\bibitem[Ga54]{Ga54}{\scshape M.P.~Gaffney}: {\em A special Stokes's theorem for complete Riemannian manifolds}, Ann. of Math. \textbf{2} 60, 140-145 (1954).
\bibitem[GHV72]{GHV72}{\scshape W.~Greub, S.~Halperin, R.~Vanstone}: {\em Connections, Curvature and Cohomology}, Vol. I, Pure and Applied Mathematics 47, Academic Press, New York-London (1972).
\bibitem[Gue16]{Gue16}{\scshape H.~Guenancia}: {\em Families of conic \ke\, metrics}, Preprint arXiv:1605.04348, to appear in Math. Annalen (2016).
\bibitem[Ka78]{Ka78}{\scshape Y.~Kawamata}: {\em On deformations of compactifible complex manifolds}, Math. Ann. Vol. \textbf{235}, 247-265 (1978).
\bibitem[Ke13]{Ke13}{\scshape S.~Kebekus}: {\em Pull-back morphisms for reflexive differential forms}, Advances in Mathematics \textbf{245}, 78-112 (2013).
\bibitem[Kob84]{Kob84}{\scshape R.~Kobayashi}: {\em K\"ahler-Einstein metric on an open algebraic manifold}, Osaka J. Math. \textbf{21}, 399-418 (1984).
\bibitem[MM07]{MM07}{\scshape X.~Ma, G.~Marinescu}: {\em Holomorphic Morse Inequalities ans Bergman Kernels}, Progress in Mathematics, vol. \textbf{254}, Birkh\"auser  (2007).
\bibitem[Na17]{Na17}{\scshape P.~Naumann}: {\em Curvature of higher direct images}, arXiv:1611.09117, to appear in Annales de la Facult\'e des Sciences de Toulouse  (2017).
\bibitem[Sch98]{Sch98}{\scshape G.~Schumacher}: {\em Moduli of framed manifolds}, Invent. Math. \textbf{134}, 229-249 (1998).
\bibitem[Sch12]{Sch12}{\scshape G.~Schumacher}: {\em Positivity of relative canonical bundles and applications}, Invent. Math. \textbf{190}, 1-56 (2012).
\bibitem[St92]{St92}{\scshape K.Th.~Sturm}: {\em Heat kernel bounds on manifolds}, Math. Ann. \textbf{292}, 149-162 (1992).
\bibitem[TY86]{TY86}{\scshape G.~Tian, S.T.~Yau}: {\em K\"ahler-Einstein metrics on complete K\"ahler manifolds and their applications to algebraic geometry}, Adv. Ser. in Math. Phys. \textbf{1}, in Mathematical Aspects of String theory, S.T. Yau ed., San Diego, 574-628, (1986)
\bibitem[Yau78]{Yau78}{\scshape S.T.~Yau}: {\em A general Schwarz lemma for K\"ahler manifolds}, Am. J. Math. \textbf{100}, pp. 197-203, (1978)
\bibitem[Zu79]{Zu79}{\scshape S.~Zucker}: {\em Hodge theory with degenarating coefficients: $L_2$-cohomology in the Poincar\'e metric}, Ann. of Math. \textbf{109}, 415-476 (1979).
\bibitem[Zu82]{Zu82}{\scshape S.~Zucker}: {\em : $L_2$-cohomology of warped products and arithmetic groups}, Invent. Math. \textbf{70}, 169-218 (1982).

\end{thebibliography}
\end{document}